\def\EquationsBySection{\def\theequation
{\thesection.\arabic{equation}}%
\@addtoreset{equation}{section}}
\newtheorem{theorem}{\bf Theorem}[section]
\newtheorem{proposition}[theorem]{\bf Proposition}
\newtheorem{corollary}[theorem]{\bf Corollary}
\newtheorem{lemma}[theorem]{\bf Lemma}
\newtheorem{algorithm}[theorem]{\bf Algorithm}
\newenvironment{proof}{Proof:}{\quad \hfill $\Box$ \vspace{2ex}}
\newtheorem{example}[theorem]{\bf Example}
\newtheorem{assumption}[theorem]{Assumption}
\begin{document}

\renewcommand\thepage{{\arabic{page}}}

{\renewcommand\thesection
    {{\thesection}}}

\changetext{}{}{-1.3cm}{-1.3cm}{}

\newcommand{\rme}{\mathrm{e}}
\newcommand{\rmi}{\mathrm{i}}
\newcommand{\rmd}{\mathrm{d}}
\newcommand{\norm}[1]{\left\Vert#1\right\Vert}
\newcommand{\abs}[1]{\left\vert#1\right\vert}
\newcommand{\set}[1]{\left\{#1\right\}}
\newcommand{\inner}[1]{\left\langle#1\right\rangle}
\newcommand{\Real}{\mathbb R}
\newcommand{\eps}{\varepsilon}
\newcommand{\To}{\rightarrow}
\newcommand{\BX}{\mathbf{B}(X)}
\newcommand{\A}{\mathcal{A}}
\renewcommand{\vec}[1]{\boldsymbol{#1}}
%


\title{Computing Highly Oscillatory Integrals\thanks{This research is supported in part by the US National Science Foundation under grant DMS-1115523, by Guangdong Provincial Government of China through the Computational Science Innovative Research Team program and
by the Natural Science Foundation of China under grants 11071286 and 91130009.}}
\author{
Yunyun Ma\thanks{Guangdong
Province Key Lab of Computational Science, School of Mathematics and Computational Science, Sun
Yat-sen University, Guangzhou 510275, P. R. China.
{\it mayy007@foxmail.com}, {\it xuyuesh@mail.sysu.edu.cn}. } \ and
Yuesheng Xu$^*$\thanks{Department of Mathematics, Syracuse University, Syracuse, NY
13244, USA.
 {\it yxu06@syr.edu}. All correspondence sent to this author.}  }
 \date{August 15,~2014}

\maketitle

\begin{abstract}
We develop two classes of composite moment-free numerical quadratures for computing highly oscillatory integrals having integrable singularities and stationary points. The first class of the quadrature rules has a polynomial order of convergence and the second class has an exponential order of convergence.
We first modify the moment-free Filon-type method for the oscillatory integrals without a singularity or a stationary point to accelerate their convergence. We then consider the oscillatory integrals without a singularity or a stationary point and then those with singularities and stationary points. The composite quadrature rules are developed based on partitioning the integration domain according to the wave number and the singularity of the integrand. The integral defined on a subinterval has either
a weak singularity without rapid oscillation or oscillation without a singularity.
The classical quadrature rules for weakly singular integrals using graded points are employed for the singular integral without rapid oscillation and the modified moment-free Filon-type method is used for the oscillatory integrals without a singularity. Unlike the existing methods, the proposed methods do not have to compute the inverse of the oscillator. Numerical experiments are presented to demonstrate the approximation accuracy and the computational efficiency of the proposed methods. Numerical results show that the proposed methods outperform methods published most recently.
\end{abstract}

Key Words: oscillatory integrals, algebraic singularities, stationary points, moment-free Filon-type method, graded points.

\section{Introduction}

We consider in this paper numerical computation of highly oscillatory integrals defined on a bounded interval whose integrands have the form $f{\rm e}^{{\rm i}\kappa g}$, where the  wave number $\kappa$ is large, the amplitude function $f$ may have weak singularities, and the oscillator $g$ has stationary points of certain order. Computing highly oscillatory integrals is of importance in wide application areas ranging from quantum chemistry, computerized tomography, electrodynamics and fluid mechanics. For a large wave number $\kappa$, the integrands oscillate rapidly and cancel themselves over most of the range. Cancelation dose not occur in the neighborhoods of critical points of the integrand (the endpoints of the integration domain and the stationary points of the oscillator). Efficiency of a quadrature of highly oscillatory integrals depends on the behavior of functions $f$ and $g$ near the critical points. Traditional methods for evaluating oscillatory integrals become expensive when the wave number $\kappa$ is large, since the number of the evaluations of the integrand used grows linearly with the wave number $\kappa$ in order to obtain certain order of accuracy.
The calculation of the integrals is widely perceived as a {\it challenge} issue.
Calculating oscillatory integrals requires special effort.

The interest in the highly oscillatory integrals has led to much progress in developing numerical quadrature formulas for computing these integrals. In the literature, there are mainly four classes of methods for the computation: asymptotic methods, Filon-type methods, Levin-type methods and numerical steepest descent methods. The basis for convergence analysis of these quadrature rules is the asymptotic expansion of the oscillatory integral, an asymptotic expansion in negative powers of the wave number $\kappa$.
The leading terms in the asymptotic expansion may be derived from integration by parts \cite{Iserles} for the case when the oscillator has no stationary point. For the case when the oscillator has stationary points, the main tool is the method of stationary phase \cite{Olver, Stein}. For a fixed wave number, the convergence order of the asymptotic method is rather low. To overcome this weakness,
the Filon-type methods \cite{Filon, Flinn, Luke} were proposed, which replace the amplitude function $f$  by a suitable interpolating function. In many situations the convergence order of the Filon-type methods is significantly higher than that of the asymptotic methods. A thorough qualitative understanding of these methods and the analysis of their convergence order may be found in \cite{Iserles3, Iserles4, Iserles}
for the univariate case and in \cite{Iserles1} for the multivariate case.
In these methods, interpolation at the Chebyshev points ensures convergence \cite{Melenk}.
A drawback of the Filon-type methods is that they require to compute the moments, which themselves are oscillatory integrals. For the cases having nonlinear oscillators, it is not always possible to compute the moments exactly. In \cite{Graham1, Graham, Olver1, Xiang}, the moment-free Filon-type methods were developed.
An entirely different approach without computing the moments is the Levin collocation method \cite{Levin}. The Levin-type methods \cite{Levin1, Olver3, Olver5, Olver4} reduce computation of the oscillatory integral to a simple problem of finding the antiderivative $F$ of the integrand, where $F$ satisfies the differential equation $F'+{\rm i}\kappa g'F=f$. The Filon-type methods and the Levin-type methods with polynomials bases
are identical for the cases having the linear oscillator but not for the cases having the nonlinear oscillator \cite{Levin, Olver2, Xiang1}. Numerical steepest descent methods \cite{Asheim, Huybrechs, Huybrechs1} for removing the oscillation converts the real integration interval into a path in the complex plane, with a standard quadrature method used to calculate the resulting complex integral.

Although many methods were proposed for computing oscillatory integrals in the literatures,
there are still a big room for improving their approximation accuracy and computational efficiency.
The Filon-type and Levin-type methods require interpolating the derivatives of the amplitude $f$ at critical points in order to achieve a higher convergence order.
Even though computing derivatives can be avoided by allowing the interpolation points to
approach the critical points as the wave number increases for the formula proposed in \cite{Iserles2}, the moments cannot always  be explicitly computed. In particular, certain special functions were used for calculating the oscillatory integrals in the case when $f$ has singularities and $g$ has stationary points. The formulas proposed in \cite{Graham1, Graham} do not require computing the special functions and the moments of these formulas can be computed exactly, at the expenses of computing the inverse of the oscillator, which takes up much computing time. Numerical steepest descent methods also require computing the inverse of the oscillator or high order derivatives of the integrand \cite{Asheim}.

The purpose of this paper is to develop efficient composite quadrature rules for computing highly oscillatory integrals with singularities and stationary points. The methods to be described in this paper require neither computing the moments of the integrand, inverting the oscillator, nor calculating the derivatives of $f$ and those of $g'$.
The main idea used here is to divide the integration interval into subintervals according to the wave number $\kappa$ and the singularity of the integrand. To avoid using the special functions, we first split the integration interval into the subintervals according to the singularity of $f$ and the stationary points of $g$ such that the integrand on the subintervals either has a weak singularity but no oscillation, or has oscillation but no singularity or stationary point. The weakly singular integrals are calculated by the classical quadratures using graded points \cite{Xu2, Xu1, Schwab}.
To avoid using
the derivatives of $f$ and those of $g'$
and avoid computing the inverse of the oscillator,
we design a composite quadrature formulas using a partition of the subinterval formed according to the wave number $\kappa$ and the property of the oscillator $g$ for the oscillatory integrals. These formulas can improve the approximation accuracy effectively, since the convergence order of the formulas computing the oscillatory integrals with smooth integrand and without stationary point may be increased by adding more internal interpolation nodes. Specifically, we develop two classes of composite moment-free quadrature formulas for the highly oscillatory integrals. Class one uses a fixed number of quadrature nodes in each subinterval and has a polynomial order of convergence. This class of formulas are stable and easy to implement. Class two uses variate numbers of quadrature nodes in the subintervals and achieves an exponential order of convergence. Convergence order of this class of formulas is higher than that of the first class.

The quadrature formulas proposed in this paper have the following advantages. Comparing with the existing formulas, the proposed formulas need not computing the inverse of the nonlinear oscillator, or utilizing the incomplete Gamma function \cite{Abramowitz} for the oscillator with stationary points. These formulas not only  reduce the computational complexity, but also enhance the approximation accuracy. The approximation accuracy of these formulas is higher than that of the existing formulas for the case when the oscillator integral has stationary points and the oscillator is not easy to invert.

We organize this paper in seven sections. In Section 2, we present an improved moment-free Filon-type method for the oscillatory integrals developed in \cite{Xiang}. In Section 3, we design a partition of the integration interval and propose composite moment-free Filon-type methods for the oscillatory integrals with smoothing integrand and without a stationary point. In sections 4 and 5, we develop the composite moment-free quadratures defined on a mesh according to the wave number $\kappa$ and the properties of the integrand for the oscillatory integrals with both singularities and stationary points. The formulas proposed in Section 4 have a polynomial order of convergence, and those in Section 5 have an exponential order of convergence. Numerical experiments are presented in Section 6 to confirm the theoretical estimates on the accuracy of the proposed formulas. Moreover, we compare the numerical performance of the proposed formulas with that of those recently proposed in \cite{Graham1, Graham}.
We summarize our conclusions in Section 7.

\section{The Filon-type Quadrature Method}

The goal of this paper is to develop quadrature methods for evaluating oscillatory integrals in the form
\begin{equation}\label{Sec2:OI0}
\mathcal{I}_\kappa[f,g]:=\int_I f(x){\rm e}^{{\rm i}\kappa g(x)}{\rm d} x,
\end{equation}
where $I:=[0,1]$,  $\kappa\gg 1$ is the wave number, $f\in L^{1}(I)$ has weak singularities and the oscillator $g\in C^\infty(I)$ has stationary points.
Our main idea to fulfil this may be described as follows. We first develop a basic quadrature formula for computing an oscillatory integral defined on a subinterval $[a,b]$ of $I$ whose integrand has no singularities or stationary points. We then design an appropriate partition $0=x_0<x_1<\ldots<x_{n-1}<x_n=1$
of $I$ according to the wave number $\kappa$, the singularities of $f$  and the stationary points of $g$ and employ the basic quadrature formula for each of the integrals defined on the subintervals $[x_j, x_{j+1}]$, for $j\in \mathbb{Z}_{n-1}:=\{0,1,\ldots, n-1\}$.

We recall the Filon-type quadrature method proposed in \cite{Xiang} for computing the integral
\begin{equation}\label{Sec2:OI}
\mathcal{I}_\kappa^{[a,b]}[f,g]:=\int_a^bf(x){\rm e}^{{\rm i}\kappa g(x)}{\rm d}x,
\end{equation}
where $[a,b]\subset I$, $f$ is continuous on $[a,b]$ and the oscillator $g$ is continuously differentiable on $[a,b]$ and has no stationary point in $[a,b]$.
By a change of variables $g(x)\to x$, the integral in \eqref{Sec2:OI} may be written as
\begin{equation}\label{Sec2:OI1}
\mathcal{I}_\kappa^{[a,b]}[f,g]=\int_{g(a)}^{g(b)}\Psi(x){\rm e}^{{\rm i}\kappa x}{\rm d}x,
\end{equation}
where
$\Psi(x):=\left((f/g')\circ g^{-1}\right)(x)$, for
$x\in[g(a),g(b)]$.
For a fixed positive integer $m$ we approximate $\Psi$ by its Lagrange interpolation polynomial of degree $m$.
Since $g$ is differentiable on $[a,b]$ and has no stationary point in the interval, $g$ must be strictly monotone on the interval.
Without loss of generality, we assume that $g$ is strictly increasing and
otherwise we consider the integral $\mathcal{I}_\kappa^{[a,b]}[f,-g]$. Letting
$m\in\mathbb{N}:=\set{1, 2, \ldots}$,
we choose $m+1$ interpolation nodes in $[a,b]$ such that
$a=t_0<t_1<\ldots<t_{m-1}<t_m=b$.
Let $p_m$ denote the Lagrange polynomial which interpolates $\Psi$ at the points $g(t_j)$, $j\in \mathbb{Z}_{m}$. Hence, its Newton form is given by
$p_m=\sum_{j\in\mathbb{Z}_m}a_jw_j$
with $w_j(x)=\prod_{l\in\mathbb{Z}_{j-1}}(x-g(t_l))$ for $x\in[g(a),g(b)]$ and $j\in\mathbb{Z}_m$,
where the coefficients $a_j$, $j\in\mathbb{Z}_m$, are the divided differences of $\Psi$, that is,
$a_j:=\Psi[g(t_0),g(t_1),\ldots,g(t_j)]$. When computing $a_j$, we are required to compute the values of $\Psi$ at $g(t_j)$. Noting that $\left(g^{-1}\circ g\right)(t_j)=t_j$ for $j\in\mathbb{Z}_m$, we need only to evaluate the functional evaluations of $f$ and $g'$ at the points $t_j$.
There is no need to calculate the inverse of $g$.
A Filon-type quadrature rule is then obtained by replacing $\Psi$ in \eqref{Sec2:OI1} with $p_m$. That is, we use
\begin{equation}\label{Sec2:Filon}
\mathcal{Q}^{[a,b]}_{\kappa,m}[f,g]:=\sum_{j\in\mathbb{Z}_m}a_j\mathcal{I}^{[g(a), g(b)]}_\kappa[w_j,\widetilde{g}],
\end{equation}
where $\widetilde{g}(x)=x$ for $x\in[g(a),g(b)]$, to approximate the integral \eqref{Sec2:OI1}.
In formula \eqref{Sec2:Filon}, the integrals $\mathcal{I}^{[g(a), g(b)]}_\kappa[w_j,\widetilde{g}]$ for $j\in \mathbb{Z}_{m}$ can be computed exactly and efficiently.
We postpone the description of computing these integrals and turn our attention to error analysis of formula \eqref{Sec2:Filon}. For a function $\phi\in C(\Omega)$, let
$\norm{\phi}_{\infty}:=\max\limits_{x\in\Omega}\set{\abs{\phi(x)}}$. Note that in this paper $\Omega$ will be either $[a,b]$ or $[g(a),g(b)]$. Let
$\mathcal{E}_{\kappa,m}^{[a,b]}[f,g]:=\abs{\mathcal{I}_\kappa^{[a,b]}[f,g]-\mathcal{Q}^{[a,b]}_{\kappa,m}[f,g]}$
and $\sigma:=\norm{g'}_\infty$.
According to \cite{Xiang}, we have the following error estimate
\begin{equation}\label{Sec2:FilonError2}
\mathcal{E}_{\kappa,m}^{[a,b]}[f,g]\leq \dfrac{3(m+1)}{m!\kappa^2}\norm{\Psi^{(m+1)}}_{\infty}\sigma^m(b-a)^m.
\end{equation}
Estimate \eqref{Sec2:FilonError2} demonstrates that the decay of the error of the Filon-type method \eqref{Sec2:Filon} is of order $\mathcal{O}(\kappa^{-2})$. The decay  of \eqref{Sec2:Filon}
is also $\mathcal{O}(\kappa^{-2})$ for the linear oscillator (see, \cite{Iserles3, Iserles1, Xiang}).
For a quadrature formula $F$ that approximates integral \eqref{Sec2:OI1}, we use $\mathcal{N}(F)$ to denote  the number of evaluations of the integrand $\Psi$
used in the formula.  According to \eqref{Sec2:Filon}, we have that
$\mathcal{N}\left(\mathcal{Q}^{[a,b]}_{\kappa,m}[f,g]\right)\leq m+1$.

From \eqref{Sec2:FilonError2}, we see that convergence of the quadrature rule $\mathcal{Q}^{[a,b]}_{\kappa,m}[f,g]$ is  affected by $\sigma$.
When $\sigma=1$, the error bound in \eqref{Sec2:FilonError2} is not affected by $\sigma$ at all.
When $\sigma<1$, the error bound in \eqref{Sec2:FilonError2} decreases exponentially with respect to
$\sigma$ as $m\to\infty$. When $\sigma>1$, the error bound in \eqref{Sec2:FilonError2} grows exponentially with respect to $\sigma$ as $m\to\infty$.
For the case $\sigma>1$ to accelerate convergence
we subdivide $[a,b]$ into $N\in\mathbb{N}$  equal subintervals and approximate $\Psi$  by its Lagrange interpolation polynomial $p_{m}$ of degree $m\in\mathbb{N}$ on each of the subintervals.
For $N\in \mathbb{N}$, we use $y_j:=a+(b-a)j/N$ for $j\in\mathbb{Z}_{N}$ to denote the partition of $[a,b]$ and $\mathcal{Q}_{\kappa,m}^{[y_{j-1},y_j]}[f,g]$ computed by \eqref{Sec2:Filon} to approximate $\mathcal{I}_{\kappa}^{[y_{j-1},y_j]}[f,g]$  for $j\in\mathbb{Z}_{N}^+$.
This leads to the quadrature formula  $\mathcal{Q}_{N,\kappa,m}^{[a,b]}[f,g]$ for computing  \eqref{Sec2:OI1}, defined by
\begin{equation}\label{alm:Sec2Filon}
\mathcal{Q}^{[a,b]}_{N,\kappa,m}[f,g]:=\sum_{j\in\mathbb{Z}_N^+}\mathcal{Q}_{\kappa,m}^{[y_{j-1},y_j]}[f,g].
\end{equation}
Note that $\mathcal{Q}_{1,\kappa,m}^{[a,b]}[f,g]=\mathcal{Q}_{\kappa,m}^{[a,b]}[f,g]$. This quadrature formula will be used  in the following sections for computing the oscillatory integrals with the integrand without a singularity or a stationary point.

In the following theorem, we analyze the error
$\mathcal{E}_{N,\kappa,m}^{[a,b]}[f,g]
:=\abs{\mathcal{I}_\kappa^{[a,b]}[f,g]-\mathcal{Q}^{[a,b]}_{N,\kappa,m}[f,g]}$.

\begin{theorem}\label{thm:Sec2FilonM}
If $\Psi\in C^{m+1}[g(a),g(b)]$, then for $N\in\mathbb{N}$
\begin{equation}\label{Sec2:FilonM1}
\mathcal{E}_{N,\kappa,m}^{[a,b]}[f,g]\leq \dfrac{3(m+1)}{m!\kappa^2N^{m-1}}\norm{\Psi^{(m+1)}}_{\infty}\sigma^{m}(b-a)^m,
\end{equation}
and $\mathcal{N}\left(\mathcal{Q}_{N,\kappa,m}^{[a,b]}[f,g]\right)\leq Nm+1.$
In particular, if $N$ is chosen as $\lceil\sigma\rceil$,  the smallest integer not less than $\sigma$, then
\begin{equation}\label{Sec2:FilonM2}
\mathcal{E}_{N,\kappa,m}^{[a,b]}[f,g]\leq \dfrac{3(m+1)}{m!\kappa^2}\norm{\Psi^{(m+1)}}_{\infty}\sigma(b-a)^m.
\end{equation}
\end{theorem}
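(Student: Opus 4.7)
The plan is to reduce the composite rule to the single-interval case treated in \eqref{Sec2:FilonError2} and then sum over the $N$ subintervals.

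First I would split the integral additively:
$$\mathcal{I}_\kappa^{[a,b]}[f,g]=\sum_{j\in\mathbb{Z}_N^+}\mathcal{I}_\kappa^{[y_{j-1},y_j]}[f,g],$$
subtract the definition \eqref{alm:Sec2Filon} of $\mathcal{Q}^{[a,b]}_{N,\kappa,m}[f,g]$, and apply the triangle inequality to obtain
$$\mathcal{E}_{N,\kappa,m}^{[a,b]}[f,g]\le\sum_{j\in\mathbb{Z}_N^+}\mathcal{E}_{\kappa,m}^{[y_{j-1},y_j]}[f,g].$$
Next I would apply the single-interval bound \eqref{Sec2:FilonError2} on each subinterval $[y_{j-1},y_j]$ of length $(b-a)/N$. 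One point that needs attention is that the bound \eqref{Sec2:FilonError2} is stated in terms of the transformed amplitude $\Psi=(f/g')\circ g^{-1}$ and the quantity $\sigma=\|g'\|_\infty$, both originally defined on $[a,b]$. On the subinterval $[y_{j-1},y_j]$ these would naturally specialize to the restriction of $\Psi$ to $[g(y_{j-1}),g(y_j)]\subset[g(a),g(b)]$ and to $\|g'\|_{\infty,[y_{j-1},y_j]}$. Both $\|\Psi^{(m+1)}\|_{\infty,[g(y_{j-1}),g(y_j)]}$ and $\|g'\|_{\infty,[y_{j-1},y_j]}$ are bounded by their counterparts on the whole interval, so each term in the sum is bounded by
$$\dfrac{3(m+1)}{m!\kappa^2}\,\|\Psi^{(m+1)}\|_\infty\,\sigma^m\left(\dfrac{b-a}{N}\right)^m.$$

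Summing these $N$ identical bounds yields exactly \eqref{Sec2:FilonM1}, since $N\cdot N^{-m}=N^{-(m-1)}$. For the node count, the single rule $\mathcal{Q}_{\kappa,m}^{[y_{j-1},y_j]}[f,g]$ uses at most $m+1$ evaluations of $\Psi$ (equivalently, of $f$ and $g'$), but the right endpoint of the $j$-th subinterval coincides with the left endpoint of the $(j{+}1)$-st, so the $N-1$ interior breakpoints are each shared; the total is therefore at most $N(m+1)-(N-1)=Nm+1$.

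Finally, choosing $N=\lceil\sigma\rceil$ gives $\sigma\le N$, hence $\sigma^m/N^{m-1}=\sigma\cdot(\sigma/N)^{m-1}\le\sigma$, and substituting this into \eqref{Sec2:FilonM1} produces \eqref{Sec2:FilonM2}. The argument is essentially bookkeeping on top of the already-established single-interval estimate; the only mild obstacle is being explicit about how $\Psi$ and $\sigma$ localize to each subinterval so that \eqref{Sec2:FilonError2} can be invoked cleanly with the global constants on the right-hand side.
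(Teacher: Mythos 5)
Your proposal is correct and follows essentially the same route as the paper's own proof: apply the single-interval estimate \eqref{Sec2:FilonError2} on each subinterval of length $(b-a)/N$, sum over $j$, count the nodes with the $N-1$ shared breakpoints, and substitute $N=\lceil\sigma\rceil$. Your extra remark on how $\Psi$ and $\sigma$ localize to each subinterval is a detail the paper leaves implicit, but it does not change the argument.
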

\begin{proof}
Let $N\in\mathbb{N}$.
We estimate the error
$\mathcal{E}_{\kappa,m}^{[y_{j-1},y_j]}[f,g]:=
\abs{\mathcal{I}_{\kappa}^{[y_{j-1},y_j]}[f,g]-\mathcal{Q}_{\kappa,m}^{[y_{j-1},y_j]}[f,g]}$
by first employing estimate \eqref{Sec2:FilonError2} with $[a,b]$ replaced by $[y_{j-1},y_j]$ and then
Summing up both sides of the resulting inequality over $j\in\mathbb{N}_N^+$. Noting $y_j-y_{j-1}=(b-a)/N$, this leads to estimate \eqref{Sec2:FilonM1}.
According to the algorithm \eqref{alm:Sec2Filon}, noting that the nodes $y_j$ for $j\in\mathbb{Z}_{N-1}^+$ are used twice in the algorithm, we conclude that
$\mathcal{N}\left(\mathcal{Q}_{N,\kappa,m}^{[a,b]}[f,g]\right)\leq N\mathcal{N}\left(\mathcal{Q}_{1,\kappa,m}^{[a,b]}[f,g]\right)-(N-1)\leq Nm+1$.
In particular, when $N$ is chosen as $\lceil\sigma\rceil$, we substitute $N=\lceil\sigma\rceil$ into estimate \eqref{Sec2:FilonM1} to yield estimate \eqref{Sec2:FilonM2}.
\end{proof}

Comparing estimate \eqref{Sec2:FilonM1} in Theorem \ref{thm:Sec2FilonM} with estimate \eqref{Sec2:FilonError2}, we see that formula \eqref{alm:Sec2Filon} uses $Nm+1$ number of the functional evaluations of $\Psi$, which is $N$ times of that used in the traditional Filon method to reach the order of error estimate $N^{-m+1}\mathcal{E}_{\kappa,m}^{[a,b]}[f,g]$. Formula \eqref{alm:Sec2Filon} will serve as a basic quadrature formula in this paper for developing sophisticated formulas for computing singular oscillatory integrals.

In the remaining sections of this paper, we shall consider the following three cases:
\begin{enumerate}
  \item[(i)]{When $f$ and $g$ are smooth and $g$ has no stationary point or inflection point in $I$, according to the wave number $\kappa$ we design a partition $0=x_0<x_1<\ldots<x_n=1$, and write $\mathcal{I}_\kappa[f,g]=\sum_{j\in\mathbb{Z}_{n}^+}\mathcal{I}_\kappa^{[x_{j-1},x_{j}]}[f,g]$.
      Formula \eqref{alm:Sec2Filon} is then used to compute integrals  $\mathcal{I}_\kappa^{[x_{j-1},x_{j}]}[f,g]$ for $j\in\mathbb{Z}_{n}^+$.}

  \item[(ii)]{When $f$ has a weak singularity only at the origin and $g$ is smooth without a stationary point or an inflection point in $I$, we first divide $I$ into two subintervals $[0,b]$ and $\Lambda:=[b,1]$ such that the integrand of $\mathcal{I}_\kappa^{[0,b]}[f,g]$ does not rapidly oscillate and that of $\mathcal{I}_\kappa^{\Lambda}[f,g]$ does not have singularity. The integral  $\mathcal{I}_\kappa^{[0,b]}[f,g]$ is calculated by a quadrature rule for weak singular integrals.  The integral  $\mathcal{I}_\kappa^{\Lambda}[f,g]$ is computed by the method described in item (i).}

  \item[(iii)]{When $f$ has a weak singularity only at the origin and  $g$ is smooth with one stationary point at the origin and has no inflection point in $I$,
      we first divide $I$ into two subintervals $[0,b]$ and $\Lambda$ such that the integrand of $\mathcal{I}_\kappa^{[0,b]}[f,g]$ does not rapidly oscillate and that of $\mathcal{I}_\kappa^{\Lambda}[f,g]$  does not have singularity or stationary point. The integrals $\mathcal{I}_\kappa^{[0,b]}[f,g]$ and $\mathcal{I}_\kappa^{\Lambda}[f,g]$ are handled in the same way as (ii).}
\end{enumerate}

The case of $f$ having a finite number of singularities in $I$, $g$ having a finite number of  stationary points or inflection points in $I$ can be treated by splitting $I$ into subintervals, on each of which $f$ has only one singular point or $g$ has only one stationary point at an end-point and without an inflection point.

\section{The Composite Filon-type Quadrature Method}

In this section, we develop composite Filon-type qadrature methods for computing the oscillatory integrals  \eqref{Sec2:OI0}. Specifically, we assume that $f$, $g\in C^2(I)$ and $g$ is increasing  monotonically  with $\norm{g'}_\infty=\sigma$, $g'(x)\neq0$ for $x\in I$ and $g''(x)\neq 0$ for $x\in (0,1)$.
We shall partition the interval $I$ into $n$ subintervals according to the wave number $\kappa$,
and propose a composite quadrature rule, where we approximate $\Psi$ by
its Lagrange interpolation polynomial of variable degrees in each subinterval,
aiming at the asymptotic error order $\mathcal{O}(\kappa^{-n-1})$.

We first motivate the construction of a $\kappa$-dependent partition of $I$. By a change of variables of $\kappa x\to x$, the integral \eqref{Sec2:OI0} becomes
\begin{equation}\label{Sec2:P}
\mathcal{I}_\kappa[f,g]
=1/\kappa\int_0^1f(x/\kappa){\rm e}^{{\rm i}\kappa g(x/\kappa)}{\rm d}x+1/\kappa\int_1^\kappa f(x/\kappa){\rm e}^{{\rm i}\kappa g(x/\kappa)}{\rm d}x.
\end{equation}
The integrals on the right hand side of \eqref{Sec2:P} do not oscillate rapidly since
$\abs{\left(\kappa g(x/\kappa)\right)'}\leq \sigma$ for $x\in [0, \kappa]$. However, traditional quadratures for
computing the second integral on the right hand side of \eqref{Sec2:P} lead to prohibitive costs for a large $\kappa$. Inspired by the  quadratures for singular integrals using graded points proposed in \cite{Xu1},
for $n\in\mathbb{N}$ with $n>1$ we suggest the partition of $[1,\kappa]$ with graded points $\kappa^{(j-1)/(n-1)}$, for $j\in\mathbb{Z}_n^+$. Using the change of variables $x/\kappa\to x$, we obtain a partition of $I$.

We now describe the construction of the $\kappa$-dependent partition of $I$. To this end,
we fix $\kappa$. For $n\in\mathbb{N}$ with $n>1$, let $\Pi_\kappa$ denote
the partition of $I$ with nodes defined  by
\begin{equation}\label{Sec3:Partition}
  x_0=0,~x_j=\kappa^{(j-1)/(n-1)-1},~\text{for}~ j\in\mathbb{Z}_{n}^+.
\end{equation}
According to the partition $\Pi_\kappa$, the integral \eqref{Sec2:OI0} is written as
$\mathcal{I}_{\kappa}[f,g]=\sum_{j\in\mathbb{Z}_{n}^+}\mathcal{I}_{\kappa}^{[x_{j-1},x_{j}]}[f,g]$.
Computing the integral $\mathcal{I}_{\kappa}[f,g]$ is then reduced to computing the integrals $\mathcal{I}_{\kappa}^{[x_{j-1},x_{j}]}[f,g]$ for $j\in\mathbb{Z}_{n}^+$. We shall use formula \eqref{alm:Sec2Filon} to calculate these integrals. For this purpose, we define the quantities
\begin{equation*}
M_j:=\max\set{\abs{g'(x_{j-1})},\abs{g'(x_j)}}~\text{and}~  N_j:=\lceil M_j\rceil,~\text{for}~j\in\mathbb{Z}_n^+.
\end{equation*}
Since $g$ is monotonically increasing on $I$, we have for $j\in\mathbb{Z}_n^+$ that
\begin{equation}\label{Sec3:der}
M_j=\max\set{\abs{g'(x)}: x\in[x_{j-1},x_{j}]}~\text{and}~ M_j\leq\sigma.
\end{equation}
We shall develop two quadrature methods. Method one uses a fixed number of quadrature points in each of the subintervals and has a polynomial order (in terms of the wave number) of convergence. Method two uses variable number of quadrature points in the subintervals and achieves an exponential order (in terms of the wave number) of convergence.

We first describe the method having a polynomial convergence order. We choose a fixed positive integer $m$.
For each $j\in\mathbb{Z}^{+}_{n}$, we use $\mathcal{Q}_{N_j,\kappa,m}^{[x_{j-1},x_{j}]}[f,g]$ defined as in \eqref{alm:Sec2Filon} to approximate $\mathcal{I}_\kappa^{[x_{j-1},x_{j}]}[f,g]$. Integral $\mathcal{I}_\kappa[f,g]$ defined by \eqref{Sec2:OI0} is then approximated by the quadrature formula
\begin{equation}\label{alm:Sec3P}
\mathcal{Q}_{\kappa,n,m}[f,g]:=\sum_{j\in\mathbb{Z}_n^+}\mathcal{Q}_{N_j,\kappa,m}^{[x_{j-1},x_{j}]}[f,g].
\end{equation}
In the next proposition, we provide an estimate of the error $\mathcal{E}_{\kappa,n,m}[f,g]:=\abs{\mathcal{I}_\kappa[f,g]-\mathcal{Q}_{\kappa,n,m}[f,g]}$
and the number $\mathcal{N}\left(\mathcal{Q}_{\kappa,n,m}[f,g]\right)$ of functional evaluations used in the formula $\mathcal{Q}_{\kappa,n,m}[f,g]$.

\begin{proposition}\label{prop:Sec3P}
For $n\in\mathbb{N}$ with $n>1$, let $\eta:=\max\set{1/\kappa,1-\kappa^{-1/(n-1)}}$. If $\Psi\in C^{m+1}[g(0),g(1)]$ for some $m\in\mathbb{N}$, then
\begin{equation*}\label{Sec3:Pk}
\mathcal{E}_{\kappa,n,m}[f,g]\leq \dfrac{3(m+1)}{m!\kappa^{2}}\norm{\Psi^{(m+1)}}_{\infty}\sigma\eta^{m-1},
\end{equation*}
and  $\mathcal{N}\left(\mathcal{Q}_{\kappa,n,m}[f,g]\right)\leq\left\lceil\sigma\right\rceil nm+1$.
\end{proposition}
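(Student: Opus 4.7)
The plan is to reduce the global error to a sum of subinterval errors, apply Theorem \ref{thm:Sec2FilonM} on each piece, and then carry out the bookkeeping to collapse the sum. Starting from $\mathcal{I}_{\kappa}[f,g]=\sum_{j\in\mathbb{Z}_n^+}\mathcal{I}_{\kappa}^{[x_{j-1},x_j]}[f,g]$ and the definition of $\mathcal{Q}_{\kappa,n,m}[f,g]$ in \eqref{alm:Sec3P}, the triangle inequality gives $\mathcal{E}_{\kappa,n,m}[f,g]\le\sum_{j\in\mathbb{Z}_n^+}\mathcal{E}_{N_j,\kappa,m}^{[x_{j-1},x_j]}[f,g]$. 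Since on $[x_{j-1},x_j]$ we have $M_j=\norm{g'}_{\infty,[x_{j-1},x_j]}$ by \eqref{Sec3:der} and we chose $N_j=\lceil M_j\rceil$, the bound \eqref{Sec2:FilonM2} of Theorem \ref{thm:Sec2FilonM} applies on each subinterval and yields
\begin{equation*}
\mathcal{E}_{N_j,\kappa,m}^{[x_{j-1},x_j]}[f,g]\le\dfrac{3(m+1)}{m!\kappa^{2}}\norm{\Psi^{(m+1)}}_{\infty}M_j(x_j-x_{j-1})^m,
\end{equation*}
where $\norm{\Psi^{(m+1)}}_\infty$ is taken on the full interval $[g(0),g(1)]$, which dominates each local version.

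The next step is the estimate $\sum_{j\in\mathbb{Z}_n^+}M_j(x_j-x_{j-1})^m\le\sigma\,\eta^{m-1}$. I would factor out one length $(x_j-x_{j-1})$ and use $M_j\le\sigma$ from \eqref{Sec3:der} together with the bound $x_j-x_{j-1}\le\eta$; what remains telescopes via $\sum_{j\in\mathbb{Z}_n^+}(x_j-x_{j-1})=x_n-x_0=1$. So the crux is verifying $\max_j(x_j-x_{j-1})=\eta$. This is a direct computation on the partition \eqref{Sec3:Partition}: the first gap is $x_1-x_0=\kappa^{-1}$, and for $j\ge 2$,
\begin{equation*}
x_j-x_{j-1}=\kappa^{(j-2)/(n-1)-1}\bigl(\kappa^{1/(n-1)}-1\bigr),
\end{equation*}
which is strictly increasing in $j$ and attains its maximum at $j=n$ equal to $\kappa^{-1/(n-1)}(\kappa^{1/(n-1)}-1)=1-\kappa^{-1/(n-1)}$. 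Hence each gap is bounded by $\max\{\kappa^{-1},1-\kappa^{-1/(n-1)}\}=\eta$, which is the key algebraic obstruction; once it is checked the error inequality drops out.

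For the node count, Theorem \ref{thm:Sec2FilonM} gives $\mathcal{N}\bigl(\mathcal{Q}_{N_j,\kappa,m}^{[x_{j-1},x_j]}[f,g]\bigr)\le N_jm+1$, and since consecutive subintervals share their common endpoint $x_j$ ($j\in\mathbb{Z}_{n-1}^+$) as an interpolation node, the naive sum overcounts by $n-1$. Combining this with $N_j\le\lceil\sigma\rceil$ (which follows from $M_j\le\sigma$), we obtain
\begin{equation*}
\mathcal{N}\bigl(\mathcal{Q}_{\kappa,n,m}[f,g]\bigr)\le\sum_{j\in\mathbb{Z}_n^+}(N_jm+1)-(n-1)\le\lceil\sigma\rceil nm+1,
\end{equation*}
which is the claimed evaluation count. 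The only nontrivial step in the whole argument is the monotonicity/maximum computation for the gap lengths of the graded partition; everything else is assembly from Theorem \ref{thm:Sec2FilonM}.
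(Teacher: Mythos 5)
Your proof is correct and follows essentially the same route as the paper: apply the bound \eqref{Sec2:FilonM2} of Theorem \ref{thm:Sec2FilonM} on each subinterval with $\sigma$ replaced by $M_j$ and $b-a$ by $h_j:=x_j-x_{j-1}$, then use $M_j\le\sigma$ and $h_j\le\eta$, and finally count nodes exactly as in \eqref{alm:Sec2Filon}. The only difference is cosmetic: you make explicit the step $\sum_j M_jh_j^m\le\sigma\eta^{m-1}\sum_jh_j=\sigma\eta^{m-1}$ (which the paper leaves implicit when it says ``summing the resulting inequality over $j$''), and you verify $h_j\le\eta$ by a monotonicity computation of the gaps rather than via the paper's inequality $h_j=x_j\bigl(1-\kappa^{-1/(n-1)}\bigr)\le x_j\eta\le\eta$; both are valid.
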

\begin{proof}
The proof is done by applying Theorem \ref{thm:Sec2FilonM} on each of the subintervals $[x_{j-1},x_{j}]$.
Specifically, for $j\in\mathbb{Z}_n^+$, we use Theorem \ref{thm:Sec2FilonM} to estimate
$\mathcal{E}_{j}[f,g]:=
\abs{\mathcal{I}_{\kappa}^{[x_{j-1},x_{j}]}[f,g]-\mathcal{Q}_{N_j,\kappa,m}^{[x_{j-1},x_{j}]}[f,g]}$.
For $j\in\mathbb{Z}_{n}^+$, we apply \eqref{Sec2:FilonM2} with $\sigma$ being replaced by $M_j$ and $b-a$ by $h_j:=x_j-x_{j-1}$ to conclude that
$\mathcal{E}_{j}[f,g]\leq\dfrac{3(m+1)}{m!\kappa^2}\norm{\Psi^{(m+1)}}_{\infty}M_jh_j^{m}$.
Note that
$h_1=x_1-x_0=1/\kappa\leq\eta$, and for $j\in\mathbb{Z}_n^+$ with $j>1$,
$h_j=x_j-x_{j-1}=x_j\left(1-\kappa^{-1/(n-1)}\right)\leq x_j\eta\leq \eta$, and
$M_j\leq \sigma$. Substitute these bounds into the inequality above and summing the resulting inequality over $j\in\mathbb{Z}_n^+$, we obtain the desired estimate for $\mathcal{E}_{\kappa,n,m}[f,g]$.

Using Theorem  \ref{thm:Sec2FilonM} again yields that
$\mathcal{N}\left(\mathcal{Q}_{\kappa,n,m}[f,g]\right)\leq\sum_{j\in\mathbb{Z}_n^+}(N_jm+1)-(n-1)
\leq \left\lceil\sigma\right\rceil nm+1$.
\end{proof}

As a direct consequence of Proposition \ref{prop:Sec3P}, we have the next estimates for the case having the linear oscillator $g(x)=x$, $x\in I$, where $\sigma=1$.

\begin{corollary}\label{cor:Sec3LinearP}
For $n\in\mathbb{N}$ with $n>1$, let $\eta:=\max\set{1/\kappa,1-\kappa^{-1/(n-1)}}$.
If $f\in C^{m+1}(I)$ for some $m\in\mathbb{N}$ and $g(x)=x$ for $x\in I$, then
$\mathcal{E}_{\kappa,n,m}[f,g]\leq \dfrac{3(m+1)}{m!\kappa^{2}}\norm{f^{(m+1)}}_{\infty}\eta^{m-1}$
and
$\mathcal{N}\left(\mathcal{Q}_{\kappa,n,m}[f,g]\right)\leq  nm+1$.
\end{corollary}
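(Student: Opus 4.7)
The plan is to obtain the corollary as a direct specialization of Proposition \ref{prop:Sec3P} to the identity oscillator. The work is essentially a bookkeeping exercise: check that every ingredient appearing in Proposition \ref{prop:Sec3P} collapses to the corresponding linear-case quantity, and then read off the two claimed bounds.

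First I would record the two simplifications that occur when $g(x)=x$ on $I$. Since $g'\equiv 1$, we have $\sigma=\norm{g'}_{\infty}=1$, and since $g^{-1}(x)=x$, the auxiliary function in the definition of the basic quadrature becomes $\Psi(x)=\bigl((f/g')\circ g^{-1}\bigr)(x)=f(x)$ for every $x\in[g(0),g(1)]=I$. In particular, the smoothness hypothesis $f\in C^{m+1}(I)$ is exactly $\Psi\in C^{m+1}[g(0),g(1)]$, so Proposition \ref{prop:Sec3P} applies, and moreover $\norm{\Psi^{(m+1)}}_{\infty}=\norm{f^{(m+1)}}_{\infty}$.

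Next I would plug these identifications into the conclusion of Proposition \ref{prop:Sec3P}. The error bound
\[
\mathcal{E}_{\kappa,n,m}[f,g]\leq \dfrac{3(m+1)}{m!\kappa^{2}}\norm{\Psi^{(m+1)}}_{\infty}\sigma\eta^{m-1}
\]
becomes, after substituting $\sigma=1$ and $\Psi=f$, exactly the asserted inequality. For the node count, note that for each $j\in\mathbb{Z}_n^+$ the quantities $M_j=\max\{|g'(x_{j-1})|,|g'(x_j)|\}=1$ and $N_j=\lceil M_j\rceil=1$, so each subinterval contributes at most $m+1$ evaluations; since consecutive subintervals share their endpoint, the total count is at most $nm+1$, which also matches the bound from Proposition \ref{prop:Sec3P} with $\lceil\sigma\rceil=1$.

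There is no real obstacle here — everything is a direct substitution. The only point one has to be slightly careful about is that the change of variables $g(x)\to x$ is trivial in the linear case, so one does not inadvertently invoke a division by $g'$ or an inversion of $g$ that introduces any hidden constants; the verification above shows $\Psi=f$ literally, and the claimed estimates then follow verbatim from Proposition \ref{prop:Sec3P}.
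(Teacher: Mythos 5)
Your proposal is correct and follows exactly the route the paper intends: the corollary is stated there as a direct consequence of Proposition \ref{prop:Sec3P}, obtained by observing that $g(x)=x$ forces $\sigma=1$ and $\Psi=f$, so that the error bound and the node count $\lceil\sigma\rceil nm+1=nm+1$ specialize verbatim. Your additional check that $M_j=N_j=1$ on each subinterval is a harmless (and accurate) elaboration of the same substitution.
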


We now turn to developing the quadrature formula having an exponential convergence order.
This is done by choosing variable numbers of quadrature nodes in the subintervals of $I$. Specifically,
for  $n\in\mathbb{N}$ with $n>1$ and for the partition of $I$ chosen as \eqref{Sec3:Partition}, we let
$m_j:=\left\lceil n(n-1)/(n+1-j)\right\rceil$ for $j\in\mathbb{Z}_{n}^+$.
For each $j\in\mathbb{Z}^{+}_{n}$, we use $\mathcal{Q}_{N_j,\kappa,m_j}^{[x_{j-1},x_{j}]}[f,g]$ to approximate $\mathcal{I}_\kappa^{[x_{j-1},x_{j}]}[f,g]$.
Integral $\mathcal{I}_\kappa[f,g]$ defined by \eqref{Sec2:OI0}
is then approximated by the quadrature formula
\begin{equation}\label{alm:Sec3E}
\mathcal{Q}_{\kappa,n}[f,g]:=\sum_{j\in\mathbb{Z}_n^+}\mathcal{Q}^{[x_{j-1},x_j]}_{N_j,\kappa,m_j}[f,g].
\end{equation}

We next study the error
$\mathcal{E}_{\kappa,n}[f,g]:=\abs{\mathcal{I}_\kappa[f,g]-\mathcal{Q}_{\kappa,n}[f,g]}$
of the quadrature $\mathcal{Q}_{\kappa,n}[f,g]$. To this end, we first establish two technical lemmas.

\begin{lemma}\label{lem:Sec3OrderP}
There exists a positive constant $c$  such that for all $n\in\mathbb{N}$ with $n>2$,
$\sum_{j\in\mathbb{Z}_{n}^+}1/(m_j-1)\leq c$.
\end{lemma}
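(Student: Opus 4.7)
The plan is a short elementary computation that requires no tool beyond the trivial bound $\lceil x \rceil \geq x$ and a change of summation index. First I would exploit the definition $m_j = \lceil n(n-1)/(n+1-j) \rceil$ to obtain the lower bound
$$m_j - 1 \;\geq\; \frac{n(n-1)}{n+1-j} - 1 \;=\; \frac{n(n-1) - (n+1-j)}{n+1-j},$$
and note that this right-hand side is strictly positive for every $j \in \mathbb{Z}_n^+$ when $n > 2$, since $n(n-1) - (n+1-j) \geq n(n-1) - n = n(n-2) > 0$. Taking reciprocals then gives
$$\frac{1}{m_j - 1} \;\leq\; \frac{n+1-j}{n(n-1) - (n+1-j)}.$$

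Next I would reindex by the substitution $k := n+1-j$, which is a bijection of $\mathbb{Z}_n^+$ onto itself and converts the harmonic-looking sum into an almost arithmetic one:
$$\sum_{j \in \mathbb{Z}_n^+} \frac{1}{m_j - 1} \;\leq\; \sum_{k \in \mathbb{Z}_n^+} \frac{k}{n(n-1) - k}.$$
The last step is a crude lower bound on the denominator: for $n \geq 3$, the largest index is $k = n$ and $n \leq n(n-1)/2$, so $n(n-1) - k \geq n(n-1)/2$ uniformly for $k \in \mathbb{Z}_n^+$. Substituting this and using $\sum_{k=1}^n k = n(n+1)/2$ yields
$$\sum_{k \in \mathbb{Z}_n^+} \frac{k}{n(n-1) - k} \;\leq\; \frac{2}{n(n-1)} \sum_{k \in \mathbb{Z}_n^+} k \;=\; \frac{n+1}{n-1} \;\leq\; 2,$$
where the last inequality uses $n \geq 3$. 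Hence the claim holds with $c = 2$.

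I do not foresee a genuine obstacle here; the entire argument is elementary. The only point requiring a moment's care is checking that $m_j - 1 > 0$ across the full range $j = 1, \ldots, n$, which is precisely where the hypothesis $n > 2$ is used (at $j = n$ one has $m_n \geq n(n-1)$, which is at least $6$). If a sharper constant were desired one could split off the few terms near $j = n$ and let $(n+1)/(n-1) \to 1$, but the bound $c = 2$ suffices for the later applications of the lemma in the paper.
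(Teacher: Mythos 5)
Your proof is correct and follows essentially the same route as the paper: both arguments start from $m_j-1\geq n(n-1)/(n+1-j)-1$ and then bound the resulting sum. The paper is slightly cruder—it uses the uniform bound $n(n-1)/(n+1-j)-1\geq n-2$ to get $\sum_{j\in\mathbb{Z}_n^+}1/(m_j-1)\leq n/(n-2)\leq 3$—whereas your reindexing yields the marginally sharper constant $c=2$; either suffices.
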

\begin{proof}
We prove this result by estimating the  lower bound of the set
$\set{m_j-1: j\in\mathbb{Z}_{n}^+}$. For $j\in\mathbb{Z}_{n}^+$, we have that
$m_j-1\geq n(n-1)/(n+1-j)-1\geq n-2$.
Thus, we obtain that
$\sum_{j\in\mathbb{Z}_{n}^+}1/(m_j-1)\leq n/(n-2)$, which is bounded by a constant $c$.
\end{proof}

\begin{lemma}\label{lem:Sec3Pre}
There exists a positive constant $c$ such that for all $\kappa>1$, $n\in\mathbb{N}$ that satisfy
\begin{equation}\label{Sec3:C}
(n-1)(\ln{(n-2+{\rm e})}-1)\geq\ln{\kappa},
\end{equation}
and  $j\in\mathbb{Z}_n^+$ with $j>1$,
\begin{equation*}
(\kappa^{1/(n-1)}-1)^{m_j-2}/(m_j-2)!\leq c(n-2)^{-1/2}.
\end{equation*}
\end{lemma}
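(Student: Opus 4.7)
The plan is to exploit the hypothesis to bound the quantity $\kappa^{1/(n-1)}-1$ cleanly, then observe that the exponents $m_j-2$ are bounded below by $n-2$, and finally apply Stirling's inequality to absorb the resulting ratio.

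\textbf{Step 1 (Bound the base from the hypothesis).} I would rewrite condition \eqref{Sec3:C} as
\begin{equation*}
\kappa \;\leq\; \exp\bigl((n-1)\ln(n-2+{\rm e})-(n-1)\bigr)\;=\;\bigl((n-2+{\rm e})/{\rm e}\bigr)^{n-1}.
\end{equation*}
Taking the $(n-1)$-st root and subtracting one gives the clean estimate
\begin{equation*}
\kappa^{1/(n-1)}-1 \;\leq\; \frac{n-2+{\rm e}}{{\rm e}}-1 \;=\; \frac{n-2}{{\rm e}}.
\end{equation*}
So the base in the expression to estimate is at most $(n-2)/{\rm e}$.

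\textbf{Step 2 (Lower bound on the exponent).} For $j\in\mathbb{Z}_n^+$ with $j>1$, the definition $m_j=\lceil n(n-1)/(n+1-j)\rceil$ gives $m_j\geq m_2=n$. Hence $m_j-2\geq n-2\geq 1$ (here $n>2$ is implicit in the statement since $(n-2)^{-1/2}$ appears on the right). In particular, $(n-2)/(m_j-2)\leq 1$.

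\textbf{Step 3 (Stirling).} I would then combine Step 1 with Stirling's lower bound $k!\geq\sqrt{2\pi k}\,(k/{\rm e})^{k}$ for $k\geq 1$, applied with $k=m_j-2$:
\begin{equation*}
\frac{(\kappa^{1/(n-1)}-1)^{m_j-2}}{(m_j-2)!}
\;\leq\; \frac{((n-2)/{\rm e})^{m_j-2}}{\sqrt{2\pi(m_j-2)}\,((m_j-2)/{\rm e})^{m_j-2}}
\;=\; \frac{1}{\sqrt{2\pi(m_j-2)}}\left(\frac{n-2}{m_j-2}\right)^{m_j-2}.
\end{equation*}
Using Step 2, the factor $((n-2)/(m_j-2))^{m_j-2}\leq 1$ and $\sqrt{m_j-2}\geq\sqrt{n-2}$, so the whole expression is bounded by $(2\pi)^{-1/2}(n-2)^{-1/2}$. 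Thus $c:=(2\pi)^{-1/2}$ works.

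The routine parts are straightforward; the only mild obstacle is to ensure that $m_j-2\geq 1$ so that Stirling's inequality is applicable, which is handled by the observation in Step 2 together with the implicit assumption $n>2$. The cleanness of the constant $(n-2)/{\rm e}$ in Step 1 is precisely what makes Stirling's ratio telescope to the desired form.
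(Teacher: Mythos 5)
Your proof is correct and follows essentially the same route as the paper: both derive $\kappa^{1/(n-1)}-1\leq (n-2)/{\rm e}$ from condition \eqref{Sec3:C}, use $m_j-2\geq n-2$, and apply the Stirling lower bound $k!\geq\sqrt{2\pi k}(k/{\rm e})^k$ with $k=m_j-2$. The only cosmetic difference is that you keep the ratio $((n-2)/(m_j-2))^{m_j-2}\leq 1$ explicit and thereby obtain the concrete constant $c=(2\pi)^{-1/2}$, whereas the paper leaves $c$ unspecified.
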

\begin{proof}
By the definition of $m_j$, we see that  $m_j-2\geq n-2$ for $j>1$. Condition \eqref{Sec3:C} implies that  $n>2$ since $\kappa>1$.
We observe from the Stirling formula \cite{Abramowitz} for $n\in\mathbb{N}$
\begin{equation}\label{Sec3:Stirling-coro}
n!\geq\sqrt{2\pi n}\left(n/{\rm e}\right)^n.
\end{equation}
Using inequality \eqref{Sec3:Stirling-coro} with $n:=m_j-2$, we conclude that there exists a positive constant $c$ such that for all $n\in\mathbb{N}$ with $n>2$ and $j\in\mathbb{Z}_n^+$ with $j>1$,
$1/(m_j-2)!\leq c (m_j-2)^{-1/2}\left({\rm e}/(m_j-2)\right)^{m_j-2}\leq c (n-2)^{-1/2}\left({\rm e}/(n-2)\right)^{m_j-2}$.
On the other hand, condition \eqref{Sec3:C} implies that
${\rm e}(\kappa^{1/(n-1)}-1)\leq n-2$.
This together with the above inequality ensures that there exists a positive
constant $c$ such that for all $\kappa>1$,
$n\in\mathbb{N}$ satisfying \eqref{Sec3:C} and  $j\in\mathbb{Z}_n^+$ with $j>1$,
$(\kappa^{1/(n-1)}-1)^{m_j-2}/(m_j-2)!
\leq c(n-2)^{-1/2}\left({\rm e}(\kappa^{1/(n-1)}-1)/(n-2)\right)^{m_j-2}\leq c(n-2)^{-1/2}$.
 This concludes the desired result.
\end{proof}

For a function $\phi\in C^{\infty}(\Omega)$, we let $\norm{\phi}_{n}:=\max\set{\norm{\phi^{(j)}}_\infty: j\in\mathbb{Z}_n}$ for $n\in\mathbb{N}$.
We are now ready to establish the estimate for $\mathcal{E}_{\kappa,n}[f,g]$.

\begin{theorem}\label{thm:Sec3E}
If $\Psi\in C^{\infty}[g(0),g(1)]$, then there exists a positive constant $c$ such that for all $\kappa> 1$ and  $n\in\mathbb{N}$ satisfying \eqref{Sec3:C},
\begin{equation*}\label{Sec3:E}
\mathcal{E}_{\kappa,n}[f,g]\leq c\sigma (n-2)^{-1/2}\kappa^{-n-1}\norm{\Psi}_{(n-1)n+1}.
\end{equation*}
For $n\in\mathbb{N}$ with $n>2$, there holds the estimate
$\mathcal{N}\left(\mathcal{Q}_{\kappa,n}[f,g]\right)\leq\left\lceil\sigma\right\rceil \left(n(n-1)\ln{n}+n^2\right)+1$.
\end{theorem}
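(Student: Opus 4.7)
The plan is to decompose $\mathcal{E}_{\kappa,n}[f,g]\leq\sum_{j\in\mathbb{Z}_n^+}\mathcal{E}_j$, where $\mathcal{E}_j:=\abs{\mathcal{I}_\kappa^{[x_{j-1},x_j]}[f,g]-\mathcal{Q}_{N_j,\kappa,m_j}^{[x_{j-1},x_j]}[f,g]}$, apply Theorem \ref{thm:Sec2FilonM} on each subinterval, and then aggregate the resulting estimates using Lemmas \ref{lem:Sec3OrderP} and \ref{lem:Sec3Pre}. Applying \eqref{Sec2:FilonM2} with $\sigma$ replaced by $M_j$ and $b-a$ by $h_j:=x_j-x_{j-1}$ yields
\begin{equation*}
\mathcal{E}_j \leq \frac{3(m_j+1)}{m_j!\,\kappa^{2}}\,\norm{\Psi^{(m_j+1)}}_{\infty}\,M_j\,h_j^{m_j},
\end{equation*}
and since $M_j\leq\sigma$ while $m_j+1\leq(n-1)n+1$ (attained at $j=n$), the derivative norm may be replaced by $\norm{\Psi}_{(n-1)n+1}$.

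Next I would analyze $h_j^{m_j}$ case by case. For $j=1$, one has $m_1=n-1$ and $h_1=1/\kappa$, so $h_1^{m_1}=\kappa^{-(n-1)}$, giving $\mathcal{E}_1\leq 3n\,\sigma\,\kappa^{-(n+1)}\norm{\Psi}_{(n-1)n+1}/(n-1)!$, which Stirling's formula immediately dominates by $c\,\sigma\,(n-2)^{-1/2}\kappa^{-(n+1)}\norm{\Psi}_{(n-1)n+1}$. For $j\geq 2$, writing $h_j=x_{j-1}(\kappa^{1/(n-1)}-1)=x_{j-1}\delta$ with $\delta:=\kappa^{1/(n-1)}-1$, and using $x_{j-1}^{m_j}=\kappa^{-m_j(n+1-j)/(n-1)}\leq\kappa^{-n}$ (a direct consequence of $m_j\geq n(n-1)/(n+1-j)$), I obtain
\begin{equation*}
\mathcal{E}_j \leq c\,\sigma\,\kappa^{-(n+2)}\,\frac{(m_j+1)\,\delta^{m_j}}{m_j!}\,\norm{\Psi}_{(n-1)n+1}.
\end{equation*}

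The main obstacle is controlling the factor $(m_j+1)\delta^{m_j}/m_j!$, since $\delta$ can be large. My idea is to split
\begin{equation*}
\frac{(m_j+1)\,\delta^{m_j}}{m_j!}=\frac{\delta^{m_j-2}}{(m_j-2)!}\cdot\frac{(m_j+1)\,\delta^2}{m_j(m_j-1)},
\end{equation*}
bound the first factor by $c(n-2)^{-1/2}$ via Lemma \ref{lem:Sec3Pre}, and observe that the second factor is at most $c\,\delta^2/(m_j-1)$. Condition \eqref{Sec3:C} forces $n\geq 3$ (the inequality fails at $n=2$ for $\kappa>1$), hence $\delta^2<\kappa^{2/(n-1)}\leq\kappa$, which supplies exactly the missing power of $\kappa$ and converts $\kappa^{-(n+2)}\delta^2$ into $\kappa^{-(n+1)}$. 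Summing over $j\geq 2$ and invoking Lemma \ref{lem:Sec3OrderP} to collapse $\sum 1/(m_j-1)$ into a constant, then combining with the $j=1$ contribution, yields the claimed error estimate.

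The node count follows by routine book-keeping. Using $\mathcal{N}\left(\mathcal{Q}_{N_j,\kappa,m_j}^{[x_{j-1},x_j]}[f,g]\right)\leq N_j m_j+1$ from Theorem \ref{thm:Sec2FilonM}, subtracting $n-1$ to account for the partition nodes shared between adjacent subintervals, and combining $N_j\leq\lceil\sigma\rceil$ with
\begin{equation*}
\sum_{j\in\mathbb{Z}_n^+}m_j\leq\sum_{j\in\mathbb{Z}_n^+}\left(\frac{n(n-1)}{n+1-j}+1\right)=n(n-1)H_n+n\leq n(n-1)\ln n+n^2,
\end{equation*}
where $H_n$ denotes the $n$-th harmonic number, bounded by $\ln n+1$, delivers the stated upper bound on $\mathcal{N}\left(\mathcal{Q}_{\kappa,n}[f,g]\right)$.
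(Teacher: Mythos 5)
Your proposal is correct and follows essentially the same route as the paper's proof: the same subinterval decomposition with \eqref{Sec2:FilonM2}, the same separate treatment of $j=1$, the same factorization isolating $\delta^{m_j-2}/(m_j-2)!$ handled by Lemma \ref{lem:Sec3Pre}, the same use of $\delta^2<\kappa$ (via $n>2$ from \eqref{Sec3:C}) to recover the power $\kappa^{-n-1}$, and the same summation via Lemma \ref{lem:Sec3OrderP} and harmonic-number bound for the node count. Your reformulation of the exponent bookkeeping as $x_{j-1}^{m_j}\leq\kappa^{-n}$ is algebraically identical to the paper's inequality $\frac{j-n-1}{n-1}m_j-1\leq -n-1$.
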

\begin{proof}
We establish the error bound by estimating errors
$\mathcal{E}_{j}[f,g]:=
\abs{\mathcal{I}_{\kappa}^{[x_{j-1},x_{j}]}[f,g]-\mathcal{Q}_{N_j,\kappa,m_j}^{[x_{j-1},x_{j}]}[f,g]}$
for $j\in\mathbb{Z}_{n}^+$,
and then sum them over $j$.
By employing \eqref{Sec2:FilonM2} with $b-a$ being replaced by $h_j$ and $m$ by $m_j$ and \eqref{Sec3:der}, we obtain for $j\in\mathbb{Z}_{n}^+$ that
\begin{equation}\label{Sec3:proof1thmE}
\mathcal{E}_j[f,g]\leq\dfrac{3\sigma(m_j+1)}{m_j!\kappa^2} h_j^{m_j}\norm{\Psi^{(m_j+1)}}_{\infty}.
\end{equation}
For $j=1$, we have that
$\mathcal{E}_1[f,g]\leq\dfrac{6\sigma \kappa^{-n-1}}{(n-2)!}\norm{\Psi^{(n)}}_{\infty}$.
For $j>1$, by substituting $h_j=\kappa^{\frac{j-n-1}{n-1}}(\kappa^{1/(n-1)}-1)$  into \eqref{Sec3:proof1thmE} we obtain that
\begin{eqnarray}\label{Sec3:proof3thmE}
\mathcal{E}_j[f,g]\leq\dfrac{6\sigma}{m_j-1}
\dfrac{(\kappa^{1/(n-1)}-1)^{m_j-2}}{(m_j-2)!}\left(\kappa^{1/(n-1)}-1\right)^{2}\kappa^{\frac{j-n-1}{n-1}m_j-2}
\norm{\Psi^{(m_j+1)}}_{\infty}.
\end{eqnarray}
On the other hand, condition \eqref{Sec3:C} implies that $n>2$. Thus, $\left(\kappa^{1/(n-1)}-1\right)^{2}<\kappa$.
Applying Lemma \ref{lem:Sec3Pre} to \eqref{Sec3:proof3thmE} yields
a positive constant $c$ such that for all $\kappa>1$, $n\in\mathbb{N}$ satisfying \eqref{Sec3:C}, and  $j\in\mathbb{Z}_{n}^+$ with $j>1$,
$\mathcal{E}_j[f,g]\leq c\sigma\dfrac{(n-2)^{-1/2}}{m_j-1}\kappa^{\frac{j-n-1}{n-1}m_j-1}\norm{\Psi^{(m_j+1)}}_{\infty}$.
Since for $j\in\mathbb{Z}_{n}^+$ with $j>1$,
$\dfrac{j-n-1}{n-1}m_j-1\leq \dfrac{j-n-1}{n-1}\dfrac{n(n-1)}{n+1-j}-1=-n-1$,
we conclude
that there exists a positive constant $c$ such that for all $\kappa>1$, $n\in\mathbb{N}$ satisfying \eqref{Sec3:C}, and $j\in\mathbb{Z}_{n}^+$ with $j>1$,
$\mathcal{E}_j[f,g]\leq c\sigma\dfrac{(n-2)^{-1/2}}{m_j-1}\kappa^{-n-1}\norm{\Psi^{(m_j+1)}}_{\infty}$.
Summing up the bound of errors $\mathcal{E}_j[f,g]$ over $j\in\mathbb{Z}_{n}^+$ and applying Lemma \ref{lem:Sec3OrderP},  we observe that
there exists a positive constant $c$ such that for all $\kappa> 1$ and $n\in\mathbb{N}$ satisfying \eqref{Sec3:C}
\begin{eqnarray*}
\mathcal{E}_{\kappa,n}[f,g]
&\leq&\dfrac{6\sigma}{(n-2)!}\kappa^{-n-1}\norm{\Psi^{(n)}}_{\infty}+c\sigma\kappa^{-n-1}
       \set{\sum_{j=2}^{n}\dfrac{(n-2)^{-{1}/{2}}}{m_j-1}}\norm{\Psi}_{(n-1)n+1}\\
&\leq&c\sigma(n-2)^{-1/2}\kappa^{-n-1}\norm{\Psi}_{(n-1)n+1}.
\end{eqnarray*}

It remains to estimate the number of functional evaluations used in the quadrature formula. To this end, we note that
 Theorem  \ref{thm:Sec2FilonM} yields
\begin{eqnarray*}
\mathcal{N}\left(\mathcal{Q}_{\kappa,n}[f,g]\right)
&\leq&\sum_{j\in\mathbb{Z}_n^+}\Big{\{}\left\lceil\sigma\right\rceil
      \left(n(n-1)/(n+1-j)+1\right)+1\Big{\}}-(n-1)\\
&\leq&\left\lceil\sigma\right\rceil\left\{n+n(n-1)
      \sum_{j\in\mathbb{Z}_n^+}1/(n+1-j)\right\}+1.
\end{eqnarray*}
For $n\in\mathbb{N}$ by using $\sum_{j\in\mathbb{Z}_n^+}1/j\leq \ln{n}+1$ we have that
\begin{eqnarray*}
\mathcal{N}\left(\mathcal{Q}_{\kappa,n}[f,g]\right)  \leq\left\lceil\sigma\right\rceil\left\{n+n(n-1)(\ln(n)+1)\right\}+1
  = \left\lceil\sigma\right\rceil \left(n(n-1)\ln{n}+n^2\right)+1,
\end{eqnarray*}
which completes the proof.
\end{proof}

As a direct consequence of Theorem \ref{thm:Sec3E}, we have the next estimates for the case having the linear oscillator.

\begin{corollary}
If $f\in C^{\infty}(I)$, then there exists a positive constant $c$  such that for all $\kappa> 1$ and $n\in\mathbb{N}$ satisfying \eqref{Sec3:C},
$\mathcal{E}_{\kappa,n}[f,g]\leq c (n-2)^{-{1}/{2}}\kappa^{-n-1}\norm{f}_{(n-1)n+1}$.
For $n\in\mathbb{N}$ with $n>2$,  there holds the estimate
$\mathcal{N}\left(\mathcal{Q}_{\kappa,n}[f,g]\right)\leq n(n-1)\ln{n}+n^2+1$.
\end{corollary}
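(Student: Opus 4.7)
The plan is to derive this corollary as a direct specialization of Theorem \ref{thm:Sec3E} to the linear oscillator $g(x)=x$ on $I$. First I would verify the two structural simplifications that happen when $g$ is the identity. Since $g'\equiv 1$ on $I$, one has $\sigma=\norm{g'}_\infty=1$ and hence $\lceil\sigma\rceil=1$. Moreover, because $g^{-1}$ is the identity and $g'\equiv 1$, the transformed amplitude satisfies
\begin{equation*}
\Psi(x)=\left((f/g')\circ g^{-1}\right)(x)=f(x),\quad x\in[g(0),g(1)]=I,
\end{equation*}
so $\Psi\in C^{\infty}(I)$ is equivalent to $f\in C^{\infty}(I)$, and $\norm{\Psi}_{(n-1)n+1}=\norm{f}_{(n-1)n+1}$.

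Next I would substitute these identifications into the two conclusions of Theorem \ref{thm:Sec3E}. The error bound
\begin{equation*}
\mathcal{E}_{\kappa,n}[f,g]\leq c\sigma(n-2)^{-1/2}\kappa^{-n-1}\norm{\Psi}_{(n-1)n+1}
\end{equation*}
collapses to $c(n-2)^{-1/2}\kappa^{-n-1}\norm{f}_{(n-1)n+1}$, with the same absolute constant $c$ furnished by Theorem \ref{thm:Sec3E}. Likewise, the evaluation count bound $\lceil\sigma\rceil(n(n-1)\ln n+n^2)+1$ reduces to $n(n-1)\ln n+n^2+1$ since $\lceil\sigma\rceil=1$.

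Since every hypothesis of Theorem \ref{thm:Sec3E} (namely $\kappa>1$, condition \eqref{Sec3:C}, and smoothness of the transformed amplitude) is inherited directly from the hypotheses of the corollary, no separate argument is required. There is essentially no obstacle: the whole proof is a one-line specialization, and the only point that warrants explicit comment is the identification $\Psi=f$ in the linear-oscillator case, which is what renders $\norm{\Psi}_{(n-1)n+1}$ as $\norm{f}_{(n-1)n+1}$ in the statement.
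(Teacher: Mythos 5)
Your proof is correct and follows exactly the route the paper intends: the corollary is stated there as a direct consequence of Theorem \ref{thm:Sec3E}, obtained by observing that for $g(x)=x$ one has $\sigma=1$, $\lceil\sigma\rceil=1$, and $\Psi=f$ on $[g(0),g(1)]=I$. Nothing further is needed.
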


The Filon-type methods \cite{Iserles, Xiang} and the Levin-type methods \cite{Levin, Olver4} achieving a convergence order higher than $\mathcal{O}(\kappa^{-2})$ require the evaluation of derivatives of $f$ and $g'$. The Filon-Clenshaw-Curtis rules \cite{Graham} for the oscillatory integrals with nonlinear oscillator requires the evaluation of $g^{-1}$. Quadrature methods developed in this section do not require computing derivatives of $f$ or those of $g'$, nor evaluating  ${g}^{-1}$. Moreover,
since condition \eqref{Sec3:C} implies that $\ln{\kappa}\leq n\ln{n}$, Theorem \ref{thm:Sec3E} demonstrates  that the quadrature $\mathcal{Q}_{\kappa,n}[f,g]$ achieves the asymptotic convergence order $\mathcal{O}(n^{-1/2}\kappa^{-n-1})$ with only $\mathcal{O}(\ln^2{\kappa})$ number of functional evaluations.

\section{Quadratures with a Polynomial Order of Convergence}

In this section, we consider computing the oscillatory integral \eqref{Sec2:OI0},
where $f$ is allowed to have weak singularities, $g$  has  stationary points and has  no
inflection point in $I$. We shall develop quadrature formulas having a polynomial order of convergence.

The key idea to be employed is to split the interval $I$ into two subintervals such that
on one subinterval the integrand has a weak singularity but no oscillation and
on the other subinterval it has oscillation but no singularity or stationary point. We then
employ quadratures using graded points for the singular integral and design a composite quadrature rules using a partition of the subinterval which is formed according to the wave number $\kappa$ and the property of $g$ for the oscillatory integral.

We first describe the weak singularity of a function defined on $I$ according to \cite{Xu1}. Let $S$ be a subset of $I$ containing a finite number of points. For some $\alpha\in (-1,1)$ and a nonnegative
integer $m$, a real-valued function  $f\in C^{m}(I\backslash S)$ is said to be of ${\rm Type}(\alpha,m,S)$ if
there exists a positive constant $c$ such that for all $x\in I\backslash S$,
\begin{equation}\label{Sec4:singularityDef}
  \abs{f^{(m)}(x)}\leq c\left[\omega_{S}(x)\right]^{\alpha-m},
\end{equation}
where  the function $\omega_S$ associated with $S$ is defined by $\omega_S(x)=\inf{\set{\abs{x-t}: ~t\in S}}$ for $x\in I$.
When we say that $f$ is of ${\rm Type}(\alpha,\infty,S)$, we mean that for  all $m\in\mathbb{N}$, $f\in C^{\infty}(I\backslash S)$  satisfies \eqref{Sec4:singularityDef}.
The parameter $\alpha$ is called the index of singularity. For $\alpha>0$, this notation was introduced by Rice in \cite{Rice}.

In this section,  $f$ is allowed to have a single weakly singular point at zero with index $\mu\in(-1,1)$ and  $g$ satisfies the following assumption:
\begin{assumption}\label{Sec4:Assume}
For a nonnegative integer $r$, the  function $g\in C^{r+1}(I)$ has a single stationary point at zero  satisfying
$g^{(j)}(0)=0$ for $j\in \mathbb{Z}_r$, $g^{(r+1)}(x)\neq0$ for $x\in I$,
and $\sigma(r):={\norm{g^{(r+1)}}_{\infty}}/{(r+1)!}\ll \kappa$, and $g$ is increasing monotonically without an inflection point in $I$.
\end{assumption}

The requirement that $g(0)=0$ in Assumption \ref{Sec4:Assume} is without loss of generality,
since if $g(0)\neq 0$,
we compute instead the integral
$\mathcal{I}_{\kappa}[f,g-g(0)]+\exp{\set{{\rm i}\kappa g(0)}}\mathcal{I}_{\kappa}[f,0]$.
For the case $r=0$, the oscillator $g$  does not have a stationary point.

We now write the integral \eqref{Sec2:OI0} as the sum of two integrals: a weakly singular integral without rapid oscillation and an oscillatory integral without a singularity or a stationary point.
According to the assumption on $g$, by the Taylor theorem, for each $x\in I$ there exists a constant $\xi_x\in[0,(\kappa\sigma(r))^{-1/(r+1)}x]$ such that
$\kappa\abs{g\left((\kappa\sigma(r))^{-1/(r+1)}x\right)}=\abs{{g^{(r+1)}(\xi_x)}x^{r+1}/\sigma(r)}/(r+1)!$.
Hence, we see that $\kappa |g(x)|\leq 1$ for
$0\leq x\leq(\kappa\sigma(r))^{-1/(r+1)}$.
Thus, for such an $x$ the function ${\rm e}^{{\rm i}\kappa g(x)}$ does not oscillate rapidly.
Motivated from the above discussion, we introduce
\begin{equation*}\label{Sec4:waveNumber}
\kappa_{\sigma(r)}:=
  \begin{cases}
    \kappa,          &\sigma(r)\leq1,\\
    \kappa\sigma(r), &\sigma(r)>1,
  \end{cases}
\end{equation*}
and define
\begin{equation}\label{Sec4:kappa}
\lambda_r:=\kappa_{\sigma(r)}^{-1/(r+1)}.
\end{equation}
We  split the interval $I$ into two subintervals $[0,\lambda_r]$ and $[\lambda_r,1]$.
Correspondingly, integral  \eqref{Sec2:OI0} may be written as the sum of integrals on these two subintervals.
Let $\Lambda:=[\lambda_r,1]$ and for $\phi\in L^1(I)$ we set
$\mathcal{I}[\phi]:=\int_0^1\phi(x){\rm d}x$.
Using a change of variables: $y=\lambda_r^{-1}x$ for
$x\in[0,\lambda_r]$,
the integral \eqref{Sec2:OI0} is rewritten as
$\mathcal{I}_{\kappa}[f,g]=\mathcal{I}^{\mu}[f,g]+\mathcal{I}_{\kappa}^{\Lambda}[f,g]$,
where
\begin{equation}\label{Sec4:weakSI}
\mathcal{I}^{\mu}[f,g]:=\lambda_r\mathcal{I}[\varphi_\kappa],
\end{equation}
with
\begin{equation}\label{Sec4:integrandSI}
\varphi_\kappa(x):=f\left(\lambda_rx\right){\rm e}^{{\rm i}\kappa
g(\lambda_rx)},~ \text{for}~x\in I
\end{equation}
and
\begin{equation}\label{Sec4:SOI}
\mathcal{I}^{\Lambda}_{\kappa}[f,g]:=\int_{\lambda_r}^{1}f(x){\rm
e}^{{\rm i}\kappa g(x)}{\rm d}x.
\end{equation}
Note that the function $\varphi_\kappa$ defined as in \eqref{Sec4:integrandSI} has a singularity
at the origin but has no oscillation and the integrand in \eqref{Sec4:SOI} has no singularity
or stationary point but has oscillation. We shall treat these two integrals separately.

We shall  develop a quadrature rule for computing the singular integral \eqref{Sec4:weakSI} having a polynomial order (in terms of the number of nodes used in the partition) of convergence, and a quadrature rule for computing  the oscillatory integral  \eqref{Sec4:SOI} having a polynomial order (in terms of the wave number) of convergence.

We now consider the integral $\mathcal{I}[\varphi_\kappa]$ that appears in \eqref{Sec4:weakSI}.
The integrand $\varphi_\kappa$ defined by \eqref{Sec4:integrandSI} does not oscillate rapidly.
The classical quadrature rules for weakly singular integrals developed in \cite{Xu1} can then be used to treat the singularity. Below, we briefly review the quadrature rules.

We begin with describing the Gauss-Legendre  quadrature rule for integral $\mathcal{I}^{[a,b]}[\psi]:=\int_{a}^{b}\psi(x){\rm d}x$, where $\psi$ is a smooth function defined on $[a,b]$.
Given $m\in\mathbb{N}$, we denote by
$-1<t_1<t_2<\ldots<t_{m-1}<t_{m}<1$
the zeros of the Legendre polynomial $P_{m}$ of degree $m$ and by
$\omega_j:=2(1-t_j^2)[m P_{m-1}(t_j)]^{-2}$ for $j\in\mathbb{Z}_{m}^{+}$,
the weights of the Gauss-Legendre quadrature rule which has the form
$\mathcal{Q}_{m}^{[a,b]}[\psi]:=(b-a)/2\sum_{j\in\mathbb{Z}_{m}^{+}}\omega_j\psi\left([(b-a)t_j+(b+a)]/2\right)$.
There is a constant $\xi\in[a,b]$ such that the error of the approximation $\mathcal{Q}_{m}^{[a,b]}[\psi]$
to $\mathcal{I}^{[a,b]}[\psi]$ is given by
$\mathcal{R}_{m}^{[a,b]}[\psi]:=(b-a)^{2m+1}\psi^{(2m)}(\xi)/(2^{2m}(2m+1)!)$.

We now recall the integral method proposed in \cite{Xu1}. Given $m\in \mathbb{N}$, let $p:=(2m+1)/(1+\mu)$.
For $s\in\mathbb{N}$ with $s>1$, according to the parameter $p$ we choose $s+1$ points given by
$x_j:=s^{-p}j^p$ for $j\in\mathbb{Z}_{s}$.
The quadrature rule  for $\mathcal{I}[\varphi_\kappa]$ is obtained by replacing $\varphi_\kappa$ on $[x_0, x_1]$ by zero and using $\mathcal{Q}_{m}^{[x_j,x_{j+1}]}[\varphi_\kappa]$  for computing the integrals  $\mathcal{I}^{[x_j,x_{j+1}]}[\varphi_\kappa]$ for $j\in\mathbb{Z}_{s-1}^+$.
Integral $\mathcal{I}^{\mu}[f,g]$ defined by \eqref{Sec4:weakSI} is then approximated by the quadrature formula
\begin{equation}\label{alm:Sec4SP}
\mathcal{Q}_{\mu,m}^{s}[f,g]:=\lambda_r\sum_{j\in\mathbb{Z}_{s-1}^+}\mathcal{Q}_{m}^{[x_j,x_{j+1}]}[\varphi_\kappa].
\end{equation}

We next estimate the error
$\mathcal{E}_{\mu,m}^{s}[f,g]:=\abs{\mathcal{Q}_{\mu,m}^{s}[f,g]-\mathcal{I}^{\mu}[f,g]}$.
We need a lemma that estimates the norm of $w_\kappa(x):=\exp\set{u_\kappa(x)}$, for $x\in I$, where
$u_\kappa(x):={\rm i}\kappa g\left(\lambda_rx\right)$ for $x\in I$.
This requires the use of the Fa\`{a} di Bruno formula \cite{Johnson,Riordan, Roman} for derivatives of the composition of two functions. For a fixed $n\in\mathbb{N}$,
if the derivatives of order $n$ of two functions $\phi$ and $\psi$ are defined, then
\begin{equation}\label{Sec4:Faa}
(\phi\circ\psi)^{(n)}=\sum_{j\in\mathbb{Z}_n^+}\phi^{(j)}(\psi)B_{n,j}\left(\psi^{(1)}, \psi^{(2)},\ldots, \psi^{(n-j+1)} \right),
\end{equation}
where  for $j\in\mathbb{Z}_n^+$,
\begin{equation}\label{Sec4:BellP}
B_{n,j}(x_1,x_2,\ldots,x_{n-j+1})=\sum\dfrac{n!}{m_1!m_2!\cdots m_{n-j+1}!} \prod_{l\in\mathbb{Z}_{n-j+1}^+}\left(\dfrac{x_l}{l!}\right)^{m_l},
\end{equation}
where the sum is taken over all $(n-j+1)$-tuples $(m_1$, $\ldots$, $m_{n-j+1})$ satisfying the constraints $\sum_{l\in\mathbb{Z}_{n-j+1}^+}m_l=j$ and
$\sum_{l\in\mathbb{Z}_{n-j+1}^+}lm_l=n$.
For $n\in\mathbb{N}$ and for $j\in\mathbb{Z}_n^+$, we let
$B_{n,j}:=B_{n,j}(1,1,\ldots,1)$ for brevity. Note that
$B_{n,j}=1/j!\sum_{l\in\mathbb{Z}_j}(-1)^{j-l}C_{j}^{l}l^n$
are the Stirling numbers of the second kind \cite{Abramowitz}. They have the property \cite{Rennie} that
for $n\in\mathbb{N}$ with $n\geq 2$ and $j\in\mathbb{Z}_n^+$ satisfying $j\leq 2^{-1}n$,
$B_{n,n-j+1}\leq B_{n,j}$.
For $n\in\mathbb{N}$, the Bell number has the bound \cite{Berend} that
$B_n:=\sum_{j\in\mathbb{Z}_n}B_{n,j}\leq\left(\dfrac{0.792n}{\ln{(n+1)}}\right)^n$.

\begin{lemma}\label{lem:Sec4Singularity}
If $g\in C^{2m}(I)$ for some $m\in\mathbb{N}$ satisfying Assumption $\ref{Sec4:Assume}$, then
there exists a positive constant  $c$  such that for all $\kappa>1$,
$\norm{w_\kappa}_{2m}\leq c$.
\end{lemma}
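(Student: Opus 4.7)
My plan is to apply the Faà di Bruno formula \eqref{Sec4:Faa} to $w_\kappa=\exp\circ\, u_\kappa$ and then obtain $\kappa$-uniform bounds on the derivatives of the inner function $u_\kappa(x)={\rm i}\kappa g(\lambda_r x)$. Since $u_\kappa$ is purely imaginary, $|w_\kappa(x)|\equiv 1$, so the order zero bound is trivial. For $1\leq n\leq 2m$, \eqref{Sec4:Faa} with $\phi(y)={\rm e}^y$ gives
\[
 w_\kappa^{(n)}(x)=\sum_{j\in\mathbb{Z}_n^+}{\rm e}^{u_\kappa(x)}
 B_{n,j}\bigl(u_\kappa^{(1)}(x),\ldots,u_\kappa^{(n-j+1)}(x)\bigr),
\]
so it suffices to find a constant $C$, independent of $\kappa>1$, such that $|u_\kappa^{(l)}(x)|\le C$ for all $x\in I$ and $1\le l\le 2m$; the bound $\|w_\kappa\|_{2m}\leq c$ then follows from the positivity and homogeneity of the partial Bell polynomials $B_{n,j}$ together with the Bell-number bound on $\sum_j B_{n,j}$.

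Next I would bound $u_\kappa^{(l)}(x)=\mathrm{i}\kappa\lambda_r^{\,l}g^{(l)}(\lambda_r x)$. The key observation is that $\kappa\lambda_r^{\,r+1}=1$ when $\sigma(r)\le 1$ and equals $1/\sigma(r)$ when $\sigma(r)>1$; in either case, by the definition \eqref{Sec4:kappa} of $\lambda_r$, the quantity $\kappa\lambda_r^{\,r+1}$ is bounded by a constant depending only on $g$. For $l\ge r+1$ I would write $\kappa\lambda_r^{\,l}=(\kappa\lambda_r^{\,r+1})\lambda_r^{\,l-r-1}$; since $\kappa>1$ and $\kappa_{\sigma(r)}\ge 1$, we have $\lambda_r\le 1$, hence $\kappa\lambda_r^{\,l}$ is bounded uniformly in $\kappa$, yielding $|u_\kappa^{(l)}(x)|\le c\|g^{(l)}\|_\infty$.

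The main point is the range $1\le l\le r$, where one must compensate the prefactor $\kappa\lambda_r^{\,l}$, which blows up as $\kappa\to\infty$. Here I use Assumption \ref{Sec4:Assume}, namely $g^{(j)}(0)=0$ for $j\in\mathbb Z_r$, to apply Taylor's theorem with integral remainder to $g^{(l)}$ around the origin: all terms through order $r-l$ vanish, so
\[
 g^{(l)}(\lambda_r x)=\frac{g^{(r+1)}(\xi_x)}{(r+1-l)!}(\lambda_r x)^{r+1-l}
\]
for some $\xi_x\in[0,\lambda_r x]$. Multiplying by $\kappa\lambda_r^{\,l}$ reconstitutes exactly the factor $\kappa\lambda_r^{\,r+1}$, killing all $\kappa$-dependence and giving $|u_\kappa^{(l)}(x)|\le (r+1)!/(r+1-l)!$ in both regimes $\sigma(r)\lessgtr 1$.

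The obstacle I expect to be the least mechanical is orchestrating this case split cleanly: the scaling $\lambda_r=\kappa_{\sigma(r)}^{-1/(r+1)}$ must be read off the two definitions of $\kappa_{\sigma(r)}$, and the cancellation $\kappa\lambda_r^{\,r+1}\asymp 1$ has to be combined with the order-$r$ vanishing of $g$ at $0$ in the correct way to produce a constant depending only on $r$, $m$ and $\|g^{(l)}\|_\infty$ for $l\le 2m$. Once this is done, taking $C:=\max_{1\le l\le 2m}C_l$ and substituting into Faà di Bruno yields $\|w_\kappa\|_{2m}\le \sum_{n=0}^{2m}\sum_{j\in\mathbb Z_n^+}C^jB_{n,j}$, a constant independent of $\kappa$.
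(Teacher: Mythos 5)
Your proposal is correct and follows essentially the same route as the paper's proof: Fa\`{a} di Bruno applied to $w_\kappa=\exp\circ\,u_\kappa$, a two-case bound on $u_\kappa^{(l)}$ (Taylor expansion at the stationary point for $l\le r$ using $g^{(j)}(0)=0$, and the direct bound $\kappa\lambda_r^{\,l}\le 1$ for $l\ge r+1$), followed by the Bell-polynomial and Bell-number estimates. Your explicit tracking of $\kappa\lambda_r^{\,r+1}\in\{1,1/\sigma(r)\}$ is a slightly sharper bookkeeping of the same cancellation the paper uses implicitly.
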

\begin{proof}
For $n\in\mathbb{Z}_{2m}^+$, by using \eqref{Sec4:Faa}, we obtain that
\begin{equation}\label{Sec4:proof0LemS}
w_\kappa^{(n)}=\sum_{j\in\mathbb{Z}_n^+}w_\kappa B_{n,j}\left(u_\kappa^{(1)}, u_\kappa^{(2)},\ldots, u_\kappa^{(n-j+1)} \right).
\end{equation}
From the definition of $u_\kappa$, we have two inequalities below.
If $j\in\mathbb{Z}_n^+$ with $j\geq r+1$, we have that
\begin{equation}\label{Sec4:proof1LemS}
\abs{u_\kappa^{(j)}(x)}\leq\kappa\lambda_r^j\abs{g^{(j)}\left(\lambda_rx\right)}\leq
\norm{g}_{n}~\text{for} ~x\in I.
\end{equation}
For $j\in\mathbb{Z}_n^+$ with $j< r+1$, by  Assumption \ref{Sec4:Assume} there exists a constant $\xi_x\in[0,\lambda_rx]$ such that
\begin{equation*}
g^{(j)}\left(\lambda_rx\right)=g^{(r+1)}(\xi_x)/(r+1-j)!\left(\lambda_rx\right)^{r+1-j},
~\text{for}~x\in I,
\end{equation*}
which implies that for all $j\in\mathbb{Z}_n^+$ with $j< r+1$ and  $x\in I$,
\begin{equation}\label{Sec4:proof2LemS}
\abs{u_\kappa^{(j)}(x)}\leq\kappa\lambda_r^j\abs{g^{(j)}\left(\lambda_rx\right)}
\leq\norm{g}_{r+1}.
\end{equation}
Letting $M:=\max\set{1,\norm{g}_{r+1},\norm{g}_{2m}}$ and applying \eqref{Sec4:BellP} with \eqref{Sec4:proof1LemS} and \eqref{Sec4:proof2LemS}, we observe for all $\kappa>1$ and $x\in I$ that
$\abs{B_{n,j}\left(u_\kappa^{(1)}, u_\kappa^{(2)},\ldots, u_\kappa^{(n-j+1)} \right)}\leq B_{n,j}M^{j}$.
 This together with \eqref{Sec4:proof0LemS} yields for all $\kappa>1$ and $n\in \mathbb{Z}_{2m}^+$ that
$\norm{w_\kappa^{(n)}}_{\infty}\leq\norm{w_\kappa}_\infty\sum_{j\in\mathbb{Z}_n^+}B_{n,j}M^{j}
\leq\norm{w_\kappa}_\infty B_nM^n$,
which is a constant independent of $\kappa$.  This together with $\norm{w_\kappa}_\infty\leq 1$ yields the desired estimate.
\end{proof}

We next estimate the derivatives of the function $\varphi_\kappa$ defined by \eqref{Sec4:integrandSI}.
We recall the Leibniz formula for the $n$-th derivative of the product of two functions for $n\in\mathbb{N}$,
\begin{equation}\label{Sec4:Leibniz}
\left(\phi\psi\right)^{(n)}=\sum_{m\in\mathbb{Z}_n} C_n^m\phi^{(m)}\psi^{(n-m)},
\end{equation}
where $C_n^m:=\dfrac{n!}{m!(n-m)!}$
are the binomial coefficients that satisfy
\begin{equation}\label{Sec4:BinomialC}
\sum_{m\in\mathbb{Z}_n}C_n^m=2^n.
\end{equation}

\begin{lemma}\label{lem:Sec4SDer0}
Let $m\in\mathbb{N}$.
If $f$ is of ${\rm Type}(\mu,2m,\set{0})$ for some $\mu\in (-1,1)$ and $g\in C^{2m}(I)$ satisfies Assumption $\ref{Sec4:Assume}$, then there exists a positive constant  $c$ such that for all  $\kappa>1$ and  $x\in (0,1]$,
$\abs{\varphi_\kappa^{(2m)}(x)}\leq c\lambda_r^\mu x^{\mu-2m}$.
\end{lemma}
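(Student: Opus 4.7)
The plan is to apply the Leibniz formula \eqref{Sec4:Leibniz} to the product $\varphi_\kappa(x) = f(\lambda_r x)\, w_\kappa(x)$, treating $f(\lambda_r \cdot)$ and $w_\kappa$ as the two factors, and then bound each factor separately using the Type$(\mu, 2m, \{0\})$ hypothesis on $f$ and Lemma \ref{lem:Sec4Singularity} on $w_\kappa$.

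Concretely, I would first write
\begin{equation*}
\varphi_\kappa^{(2m)}(x) = \sum_{k\in\mathbb{Z}_{2m}} C_{2m}^{k}\, \lambda_r^{k} f^{(k)}(\lambda_r x)\, w_\kappa^{(2m-k)}(x),
\end{equation*}
using the chain rule to produce the factor $\lambda_r^{k}$ in each term. For the first factor, the assumption that $f$ is of Type$(\mu,2m,\{0\})$ together with $\omega_{\{0\}}(y) = y$ for $y \in (0,1]$ gives $|f^{(k)}(\lambda_r x)| \leq c (\lambda_r x)^{\mu - k}$ for each $k \in \mathbb{Z}_{2m}$, so that
\begin{equation*}
\lambda_r^{k}\, |f^{(k)}(\lambda_r x)| \leq c\, \lambda_r^{\mu}\, x^{\mu-k}.
\end{equation*}
For the second factor, since $2m - k \leq 2m$, Lemma \ref{lem:Sec4Singularity} furnishes a $\kappa$-independent bound $|w_\kappa^{(2m-k)}(x)| \leq c$ valid for all $\kappa > 1$ and $x \in I$.

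Combining these estimates and using the elementary observation that, for $x \in (0,1]$ and $k \in \mathbb{Z}_{2m}$, one has $x^{\mu - k} \leq x^{\mu - 2m}$ because the exponent $\mu - 2m$ is the most negative, the Leibniz expansion bound becomes
\begin{equation*}
|\varphi_\kappa^{(2m)}(x)| \leq c\, \lambda_r^{\mu}\, x^{\mu - 2m} \sum_{k \in \mathbb{Z}_{2m}} C_{2m}^{k},
\end{equation*}
and the sum of binomial coefficients equals $2^{2m}$ by \eqref{Sec4:BinomialC}, which may be absorbed into the constant $c$, yielding the claimed estimate.

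The only delicate point is verifying that Lemma \ref{lem:Sec4Singularity} delivers a uniform-in-$\kappa$ bound on $\|w_\kappa\|_{2m}$ that can be used pointwise; this is precisely what that lemma gives since $\|\cdot\|_{2m}$ is defined as the maximum sup-norm over derivatives of order up to $2m$, so no additional work is needed there. The rest of the argument is a routine Leibniz-plus-chain-rule calculation combined with the Type-$(\mu,2m,\{0\})$ estimate, and the handling of the unified exponent $x^{\mu-2m}$ on $(0,1]$ is the only step requiring a brief justification.
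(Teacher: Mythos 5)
Your proposal is correct and follows essentially the same route as the paper's own proof: a Leibniz expansion of $\varphi_\kappa^{(2m)}$, the chain-rule bound $\abs{(f(\lambda_r x))^{(j)}}\leq \lambda_r^j\abs{f^{(j)}(\lambda_r x)}\leq c\lambda_r^\mu x^{\mu-j}$ from the ${\rm Type}(\mu,2m,\set{0})$ hypothesis, the uniform bound on $\norm{w_\kappa}_{2m}$ from Lemma \ref{lem:Sec4Singularity}, the observation $x^{\mu-j}\leq x^{\mu-2m}$ on $(0,1]$, and the binomial identity \eqref{Sec4:BinomialC}. No gaps.
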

\begin{proof}
Applying the Leibniz formula \eqref{Sec4:Leibniz} to the function $\varphi_\kappa$ yields
\begin{eqnarray}\label{Sec4:proof1LemSDer0}
\varphi_\kappa^{(2m)}(x)=\sum_{j\in\mathbb{Z}_{2m}} C_{2m}^{j}\left(f\left(\lambda_rx\right)\right)^{(j)} w_\kappa^{(2m-j)}(x),~\text{for}~x\in (0,1].
\end{eqnarray}
From the assumption on $f$, there exists a positive constant $c$
such that for all  $\kappa>1$,  $j\in\mathbb{Z}_{2m}$ and  $x\in (0,1]$,
$\abs{\left(f\left(\lambda_rx\right)\right)^{(j)}}\leq \lambda_r^j\abs{f^{(j)}\left(\lambda_rx\right)}
\leq c\lambda_r^\mu x^{\mu-j}$.
This together with \eqref{Sec4:proof1LemSDer0} yields a positive
constant  $c$ such that for all  $\kappa>1$ and  $x\in (0,1]$,
\begin{eqnarray*}
\abs{\varphi_\kappa^{(2m)}(x)}
\leq c\sum_{j\in\mathbb{Z}_{2m}} C_{2m}^{j}\lambda_r^\mu x^{\mu-j}\abs{w_\kappa^{(2m-j)}(x)}
\leq c\lambda_r^\mu x^{\mu-2m}\norm{w_\kappa}_{2m}\sum_{j\in\mathbb{Z}_{2m}} C_{2m}^{j}.
\end{eqnarray*}
Using Lemma \ref{lem:Sec4Singularity} and formula \eqref{Sec4:BinomialC} in the inequality above, we obtain the desired estimate.
\end{proof}

We need a technical result for the integral of a function  of ${\rm Type}(\mu,0,\{0\})$ for some $\mu\in (-1,1)$.

\begin{lemma}\label{lem:Sec4S}
If $\phi$ is of ${\rm Type}(\mu,0,\{0\})$ for some  $\mu\in (-1,1)$, then there exists
a positive constant $c$  such that for all $0<b<1$ and $\varrho>0$,
$\int_{0}^b\abs{\phi(\varrho x)} {\rm d}x\leq{c\varrho^\mu}b^{1+\mu}$.
\end{lemma}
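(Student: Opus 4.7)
The plan is to reduce the claim to a one-line substitution plus the integrability of a power singularity, using only the pointwise bound supplied by the ${\rm Type}(\mu,0,\{0\})$ condition.

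First I would unpack the definition: since $\phi$ is of ${\rm Type}(\mu,0,\{0\})$, we have $\omega_{\{0\}}(x)=|x|$, so \eqref{Sec4:singularityDef} with $m=0$ gives a constant $c_0>0$ with $|\phi(y)|\leq c_0\, y^{\mu}$ for $y\in(0,1]$. Applied pointwise with $y=\varrho x$ (and assuming, as in the applications of this lemma in \eqref{Sec4:weakSI}--\eqref{Sec4:integrandSI}, that $\varrho x$ lies in the domain on which the Type bound holds), this yields
\begin{equation*}
|\phi(\varrho x)| \leq c_0 (\varrho x)^{\mu} = c_0\, \varrho^{\mu} x^{\mu}, \qquad x\in(0,b].
\end{equation*}

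Next I would integrate. Because $\mu\in(-1,1)$, the function $x\mapsto x^{\mu}$ is integrable on $[0,b]$ for every $0<b<1$, and
\begin{equation*}
\int_{0}^{b} |\phi(\varrho x)|\,{\rm d}x \;\leq\; c_0\, \varrho^{\mu}\int_{0}^{b} x^{\mu}\,{\rm d}x \;=\; \frac{c_0}{1+\mu}\, \varrho^{\mu}\, b^{1+\mu}.
\end{equation*}
Setting $c:=c_0/(1+\mu)$ gives the stated inequality, uniformly in $0<b<1$ and $\varrho>0$.

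There is essentially no obstacle: the only subtlety is the domain issue, namely that the Type bound is stated on $I\setminus\{0\}$ while the integrand involves $\phi(\varrho x)$ for $x\in(0,b]$. In the uses of this lemma elsewhere in Section 4, one has $\varrho$ playing the role of $\lambda_r$ (or similar) and $b\leq 1$, so $\varrho x$ remains in a range where the Type estimate is applicable; alternatively, the same argument works verbatim under the natural extension that $|\phi(y)|\leq c_0\, y^{\mu}$ holds on $(0,\infty)$, or after the change of variable $y=\varrho x$, which reduces the integral to $\varrho^{-1}\int_0^{\varrho b} y^{\mu}\,{\rm d}y = \varrho^{\mu} b^{1+\mu}/(1+\mu)$.
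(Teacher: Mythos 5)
Your proof is correct and follows exactly the paper's argument: the paper's own proof is the one-line statement that one bounds $\abs{\phi(\varrho x)}$ by $c\varrho^\mu x^\mu$ and computes the resulting integral exactly, which is precisely what you carry out (with the additional, reasonable, remark about the domain on which the Type bound applies).
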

\begin{proof}
We prove this result by  bounding $\abs{\phi(\varrho x)}$ by  $c\varrho^\mu x^\mu$ and  computing the resulting integral exactly.
\end{proof}

For a quadrature formula $F$ for approximation of \eqref{Sec4:weakSI}, we use $\mathcal{N}(F)$ to denote  the number of evaluations of the integrand $\varphi_\kappa$ used in the formula.
With the above preparation, we estimate the error $\mathcal{E}_{\mu,m}^{s}[f,g]$ and $\mathcal{N}\left(\mathcal{Q}_{\mu,m}^{s}[f,g]\right)$ in the following theorem.

\begin{theorem}\label{thm:Sec4SP}Let $m\in\mathbb{N}$.
If $f$ is of ${\rm Type}(\mu,2m,\set{0})$ for some $\mu\in (-1,1)$ and $g\in C^{2m}(I)$ satisfies Assumption $\ref{Sec4:Assume}$, then there exists a positive constant $c$ such that for all $\kappa>1$, $s\in \mathbb{N}$ with $s>1$
\begin{equation}\label{Sec4:ErrorSP}
\mathcal{E}_{\mu,m}^{s}[f,g]\leq c\kappa_{\sigma(r)}^{-(1+\mu)/(r+1)}s^{-2m},
\end{equation}
and $\mathcal{N}\left(\mathcal{Q}_{\mu,m}^{s}[f,g]\right)\leq (s-1)m$.
If $g(x)=x$ for $x\in I$, then the upper bound in \eqref{Sec4:ErrorSP} reduces to $c\kappa^{-1-\mu}s^{-2m}$.
\end{theorem}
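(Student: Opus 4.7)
The plan is to decompose the error according to the structure of the quadrature \eqref{alm:Sec4SP}: the subinterval $[x_0, x_1] = [0, s^{-p}]$ contributes the entire integral of $\varphi_\kappa$ over $[0, s^{-p}]$ (since $\varphi_\kappa$ is replaced by zero there), while each subinterval $[x_j, x_{j+1}]$ for $j\in\mathbb{Z}_{s-1}^+$ contributes the classical $m$-point Gauss--Legendre remainder $\mathcal{R}_m^{[x_j, x_{j+1}]}[\varphi_\kappa]$. I would then bound $\mathcal{E}_{\mu,m}^s[f,g]$ by $\lambda_r$ times the sum of these two contributions.

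For the first subinterval, the map $y \mapsto f(y)\rme^{\rmi\kappa g(y)}$ is of Type$(\mu,0,\set{0})$ uniformly in $\kappa$ because the oscillatory factor has modulus one and $f$ is of Type$(\mu,2m,\set{0})$. Lemma \ref{lem:Sec4S} applied with $\varrho := \lambda_r$ and $b := s^{-p}$ then yields $\abs{\mathcal{I}^{[0,x_1]}[\varphi_\kappa]} \le c\lambda_r^\mu s^{-p(1+\mu)} = c\lambda_r^\mu s^{-(2m+1)}$, where the last equality uses the defining relation $p(1+\mu)=2m+1$. For each $j\ge 1$, the Gauss--Legendre error formula bounds $\abs{\mathcal{R}_m^{[x_j,x_{j+1}]}[\varphi_\kappa]}$ by a constant multiple of $(x_{j+1}-x_j)^{2m+1}\norm{\varphi_\kappa^{(2m)}}_\infty$ over $[x_j,x_{j+1}]$. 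Lemma \ref{lem:Sec4SDer0} controls the derivative factor, and since $\mu-2m<0$ the maximum is attained near the left endpoint, giving $c\lambda_r^\mu x_j^{\mu-2m} = c\lambda_r^\mu s^{-p(\mu-2m)} j^{p(\mu-2m)}$. For the interval length I would use the mean value theorem to bound $x_{j+1}-x_j \le c s^{-p} j^{p-1}$ (absorbing the case distinction $p\ge 1$ versus $p<1$ into a single constant). Multiplying these estimates, the exponent of $j$ is $(p-1)(2m+1) + p(\mu-2m) = p(1+\mu) - (2m+1) = 0$, and the exponent of $s$ collapses to $-p(1+\mu) = -(2m+1)$. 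Hence $\abs{\mathcal{R}_m^{[x_j,x_{j+1}]}[\varphi_\kappa]} \le c\lambda_r^\mu s^{-(2m+1)}$, uniformly in $j$.

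Summing both contributions over the $s-1$ subintervals and multiplying by $\lambda_r$ yields $\mathcal{E}_{\mu,m}^s[f,g] \le c\lambda_r^{1+\mu} s \cdot s^{-(2m+1)} = c\lambda_r^{1+\mu} s^{-2m}$, and substituting $\lambda_r^{1+\mu} = \kappa_{\sigma(r)}^{-(1+\mu)/(r+1)}$ from \eqref{Sec4:kappa} produces the asserted bound. The node count $(s-1)m$ follows immediately from using $m$ nodes on each of $s-1$ active subintervals. For the linear oscillator $g(x)=x$ one has $r=0$ and $\sigma(0)=1$, so $\kappa_{\sigma(r)}^{-(1+\mu)/(r+1)} = \kappa^{-(1+\mu)}$. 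I expect the main technical subtlety to be the $j$-exponent cancellation: the grading exponent $p=(2m+1)/(1+\mu)$ is calibrated precisely to equalize the singular blow-up of $\varphi_\kappa^{(2m)}$ against the shrinking mesh near the origin, so the careful tracking of the powers of $j$, $s$, and $\lambda_r$ is essential; fortunately Lemmas \ref{lem:Sec4Singularity}--\ref{lem:Sec4S} already isolate the hardest analytic steps (uniform control of $w_\kappa$ and of $\varphi_\kappa^{(2m)}$ independent of $\kappa$).
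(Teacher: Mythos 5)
Your proposal is correct and follows essentially the same route as the paper: the same splitting into $\mathcal{E}_0[\varphi_\kappa]$ handled by Lemma \ref{lem:Sec4S} and the Gauss--Legendre remainders on $[x_j,x_{j+1}]$ controlled via Lemma \ref{lem:Sec4SDer0}, followed by summation and the substitution $\lambda_r^{1+\mu}=\kappa_{\sigma(r)}^{-(1+\mu)/(r+1)}$. The only difference is that you derive the cancellation $h_j^{2m+1}x_j^{\mu-2m}\leq cs^{-2m-1}$ explicitly from the grading $p=(2m+1)/(1+\mu)$, whereas the paper imports this estimate from the proof of Theorem 2.3 in \cite{Xu1}; your computation of the exponents is correct (and the case $p<1$ you mention never actually occurs, since $m\geq 1$ and $\mu<1$ force $p>3/2$).
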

\begin{proof}
Let $\mathcal{E}_{0}[\varphi_\kappa]:=\abs{\int_{x_0}^{x_1}\varphi_\kappa(x){\rm d}x}$ and
$\mathcal{E}_{j}[\varphi_\kappa]:=\abs{\mathcal{Q}_{m}^{[x_j,x_{j+1}]}[\varphi_{\kappa}]
-\mathcal{I}^{[x_j,x_{j+1}]}[\varphi_{\kappa}]}$
for $j\in\mathbb{Z}_{s-1}^{+}$.
We prove  \eqref{Sec4:ErrorSP} by estimating $\mathcal{E}_{j}[\varphi_\kappa]$
and then sum them over $j\in\mathbb{Z}_{s-1}$.

We first consider $\mathcal{E}_{0}[\varphi_\kappa]$.
By  applying Lemma \ref{lem:Sec4S}
with $\varrho:=\lambda_r$ and $b:=x_1=s^{-(2m+1)/(1+\mu)}$
we conclude that there exists a positive constant $c$ such that for all $\kappa>1$ and
$s\in\mathbb{N}$ with $s>1$
\begin{eqnarray*}
\mathcal{E}_{0}[\varphi_\kappa]
\leq\int_{0}^{x_1}\abs{f\left(\lambda_r x\right)}{\rm d}x
\leq c\lambda_r^\mu s^{-2m-1}.
\end{eqnarray*}
Note that $\lambda_r$ is defined by \eqref{Sec4:kappa}.
For $j\in\mathbb{Z}_{s-1}^{+}$, by the error bound of the Gauss-Legendre quadrature,
there exists a constant $\xi_j\in[x_j,x_{j+1}]$ such that
$\mathcal{E}_{j}[\varphi_\kappa]\leq h_j^{2m+1}\abs{\varphi_\kappa^{(2m)}(\xi_j)}/(2^{2m}(2m+1)!)$,
where $h_j:=x_{j+1}-x_j$.
Using  Lemma \ref{lem:Sec4SDer0} with  $\xi_j\geq x_j$ for $j\in\mathbb{Z}_{s-1}^{+}$,
there exists a positive constant $c$  such that for all $\kappa>1$,
$s\in\mathbb{N}$ with $s>1$ and $j\in\mathbb{Z}_{s-1}^{+}$,
$\abs{\varphi_\kappa^{(2m)}(\xi_j)}\leq c\lambda_r^\mu x_j^{\mu-2m}$.
Substituting this inequality into  the inequality above and employing the estimate
$h_j^{2m+1}x_j^{\mu-2m}\leq cs^{-2m-1}$ obtained from the proof of Theorem 2.3 in \cite{Xu1},
we see that there exists a positive constant $c$ such that for all $\kappa>1$,
$s\in \mathbb{N}$ with $s>1$ and $j\in\mathbb{Z}_{s-1}^{+}$,
$\mathcal{E}_{j}[\varphi_\kappa]\leq c \lambda_r^\mu s^{-2m-1}$.
Summing up the  bound of errors $\mathcal{E}_{j}[\varphi_\kappa]$ over $j\in\mathbb{Z}_{s-1}$ and using the definition \eqref{Sec4:kappa} of $\lambda_r$  we obtain the estimate \eqref{Sec4:ErrorSP}.

The bound on the number of functional evaluations used in  the algorithm \eqref{alm:Sec4SP} may be obtained directly.

When $g(x)=x$, we substitute $r=0$ and  $\kappa_{\sigma(0)}=\kappa$ into estimate \eqref{Sec4:ErrorSP} to yield the special result.
\end{proof}

We next develop a quadrature formula for computing the oscillatory integral \eqref{Sec4:SOI}.
For $n\in\mathbb{N}$, we choose the  partition
$\Pi_{\kappa,\sigma(r)}$  of $\Lambda$ with nodes defined by
\begin{equation}\label{Sec4:PartitionOI}
x_j:=\kappa_{\sigma(r)}^{(j/n-1)/(r+1)},\ \ j\in\mathbb{Z}_n.
\end{equation}
According to the partition, the integral \eqref{Sec4:SOI} is rewritten as
$\mathcal{I}_{\kappa}^{\Lambda}[f,g]=\sum_{j\in\mathbb{Z}_n^+}\mathcal{I}_{\kappa}^{[x_{j-1},x_{j}]}[f,g]$.
For each $j\in\mathbb{Z}_{n}^+$, we use quadrature formula \eqref{alm:Sec2Filon} to compute an approximation $\mathcal{Q}_{N_j,\kappa,m}^{[x_{j-1},x_{j}]}[f,g]$ of the integral $\mathcal{I}_{\kappa}^{[x_{j-1},x_{j}]}[f,g]$, where $N_j$ is a positive integer to be specified later.
We estimate the error $\mathcal{E}_j[f,g]:=\abs{\mathcal{I}_{\kappa}^{[x_{j-1},x_{j}]}[f,g]
-\mathcal{Q}_{N_j,\kappa,m}^{[x_{j-1},x_{j}]}[f,g]}$ for $j\in\mathbb{Z}_{n}^+$ in the following lemma. To this end, we define $\Theta(y):=\Psi(g(1)y)$ for $y\in I$,
$\delta(r):=\min\set{\abs{g^{(r+1)}(x)}/(r+1)!: x\in I}$,
$\beta_n(r):=\lambda_r^{-1/n}-1$ with $\lambda_r$ defined by \eqref{Sec4:kappa},
$M_j:=\max\set{\abs{g'(x_{j-1})},\abs{g'(x_j)}}$ and
$q_j:=M_j x_{j-1}/g(x_{j-1})$, for $j\in\mathbb{Z}_n^+$.

\begin{lemma}\label{lem:Sec4SubE}
If $\Theta$ is of ${\rm Type}(\alpha,m+1,\set{0})$ for some $\alpha\in (-1,1)$ and some $m\in\mathbb{N}$,
and $g$ satisfies Assumption $\ref{Sec4:Assume}$, then there exists a positive  constant $c$ such that for all $\kappa>1$, $n\in\mathbb{N}$ and $j\in\mathbb{Z}_n^+$
\begin{eqnarray*}
\mathcal{E}_j[f,g]\leq c\dfrac{\abs{g(1)}^{-\alpha}(\delta(r))^{\alpha-1}}{(m-1)!\kappa^2N_j^{m-1}}q_j^mx_{j-1}^{(\alpha-1)(r+1)}\left(\beta_n(r)\right)^m.
\end{eqnarray*}
\end{lemma}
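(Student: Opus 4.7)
My plan is to apply the error bound \eqref{Sec2:FilonM1} of Theorem \ref{thm:Sec2FilonM} to the subinterval $[x_{j-1},x_j]$ with localized constants: $\sigma$ replaced by $M_j$ (valid since $g'$ is monotone on $I$, so $|g'|$ attains its maximum modulus over the subinterval at an endpoint), $N$ replaced by $N_j$, and $b-a$ replaced by $h_j:=x_j-x_{j-1}$. The norm appearing in the resulting estimate is then restricted to $[g(x_{j-1}),g(x_j)]$, and the task reduces to bounding the two factors $M_j^m h_j^m$ and $\norm{\Psi^{(m+1)}}_{\infty,[g(x_{j-1}),g(x_j)]}$. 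The leading coefficient $3(m+1)/m!$ absorbs into a constant multiple of $1/(m-1)!$ since $(m+1)/m$ is bounded for $m\in\mathbb{N}$.

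For the first factor, the partition \eqref{Sec4:PartitionOI} gives $h_j=x_{j-1}\bigl(\kappa_{\sigma(r)}^{1/(n(r+1))}-1\bigr)=x_{j-1}\beta_n(r)$, and the identity $M_j x_{j-1}=q_j\,g(x_{j-1})$, immediate from the definition of $q_j$, then rewrites $M_j^m h_j^m = q_j^m\,g(x_{j-1})^m\,\beta_n(r)^m$.

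For the second factor, I would use $\Psi(x)=\Theta(x/g(1))$ so that $\Psi^{(m+1)}(x)=g(1)^{-(m+1)}\Theta^{(m+1)}(x/g(1))$, and then invoke the Type$(\alpha,m+1,\{0\})$ hypothesis on $\Theta$, namely $|\Theta^{(m+1)}(y)|\leq c\,y^{\alpha-m-1}$ for $y\in(0,1]$. Because $\alpha-m-1<0$ (from $\alpha<1$ and $m\geq1$), the supremum over $[g(x_{j-1}),g(x_j)]$ is attained at the left endpoint, producing $\norm{\Psi^{(m+1)}}_{\infty,[g(x_{j-1}),g(x_j)]}\leq c\,|g(1)|^{-\alpha}\,g(x_{j-1})^{\alpha-m-1}$.

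Multiplying the two bounds collapses $g(x_{j-1})^m \cdot g(x_{j-1})^{\alpha-m-1}$ into $g(x_{j-1})^{\alpha-1}$, and the only remaining step is converting this factor into the stated $\delta(r)^{\alpha-1}\,x_{j-1}^{(\alpha-1)(r+1)}$. This is the main spot where care is required. Under Assumption \ref{Sec4:Assume}, the vanishing $g^{(j)}(0)=0$ for $j\leq r$ together with $g^{(r+1)}$ nowhere zero on $I$, combined with Taylor's theorem at the origin, gives the lower bound $g(x_{j-1})\geq \delta(r)\,x_{j-1}^{r+1}$; since $\alpha-1<0$, raising this inequality to the power $\alpha-1$ reverses it and produces $g(x_{j-1})^{\alpha-1}\leq \delta(r)^{\alpha-1}\,x_{j-1}^{(\alpha-1)(r+1)}$. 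Substituting this into the previous product, together with the factor $1/(\kappa^2 N_j^{m-1})$ carried over from \eqref{Sec2:FilonM1}, yields the claimed estimate.
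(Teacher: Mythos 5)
Your proposal is correct and follows essentially the same route as the paper's own proof: localize estimate \eqref{Sec2:FilonM1} to $[x_{j-1},x_j]$, convert $\norm{\Psi^{(m+1)}}_{\infty}$ through $\Theta$ and its ${\rm Type}(\alpha,m+1,\set{0})$ bound, rewrite $M_j^m h_j^m$ via $q_j$ and $h_j=x_{j-1}\beta_n(r)$, and finish with $g(x_{j-1})\geq\delta(r)x_{j-1}^{r+1}$. You merely spell out a few steps the paper leaves implicit (where the supremum of $\Theta^{(m+1)}$ is attained and the Taylor argument behind the lower bound on $g$), which is fine.
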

\begin{proof}
We prove this result  by applying  \eqref{Sec2:FilonM1} in Theorem \ref{thm:Sec2FilonM} to $\mathcal{E}_j[f,g]$
for  $j\in\mathbb{Z}_{n}^+$. From \eqref{Sec2:FilonM1}, we have for all $j\in\mathbb{Z}_{n}^+$ that
$\mathcal{E}_j[f,g]\leq\dfrac{3(m+1)}{m!\kappa^2N_j^{m-1}}\norm{\Psi^{(m+1)}}_{\infty}M_j^mh_j^m$,
where $h_j:=x_{j}-x_{j-1}$. In the estimate above, using the relation
$\Psi^{(m+1)}(g(1)y)= (g(1))^{-m-1}\Theta^{(m+1)}(y)$ for $y\in(0,1]$ with the assumption on $\Theta$, we observe that there exists a positive constant $c$ such that for all $\kappa>1$,
$n\in\mathbb{N}$ and $j\in\mathbb{Z}_{n}^+$,
\begin{eqnarray*}
\mathcal{E}_j[f,g]
\leq c\dfrac{\abs{g(1)}^{-\alpha}}{(m-1)!\kappa^2N_j^{m-1}}q_j^m{(g(x_{j-1}))^{\alpha-1}}x_{j-1}^{-m}h_j^m.
\end{eqnarray*}
Note that
$(g(x_{j-1}))^{\alpha-1}\leq(\delta(r))^{\alpha-1} x_{j-1}^{(r+1)(\alpha-1)}$ and $x_{j-1}^{-m}h_j^m=\left(\beta_n(r)\right)^m$
for $j\in\mathbb{Z}_{n}^+$. Substituting these relations into the inequality above yields the desired result.
\end{proof}

We estimate an upper bound of the quantity $\mathbb{Q}_n:=\max_{j\in\mathbb{Z}_{n}^+}\set{q_j}$.

\begin{lemma}\label{lem:Sec4Bound}
If $g$ satisfies Assumption $\ref{Sec4:Assume}$, then for all $n\in\mathbb{N}$ and $\kappa>1$,
$\mathbb{Q}_n\leq (r+1)\kappa_{\sigma(r)}^{r/(n(r+1))}{\sigma(r)}/{\delta(r)}$.
\end{lemma}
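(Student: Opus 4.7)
The plan is to estimate each $q_j$ separately by a common bound, and then take the maximum. The key is to use Taylor expansion at the origin, where Assumption \ref{Sec4:Assume} forces $g$ and its first $r$ derivatives to vanish, so the leading behavior of $g$ and $g'$ near $0$ is governed by $g^{(r+1)}$. Specifically, for any $x\in I$, Taylor's theorem applied to $g$ at $0$ yields a point $\xi\in[0,x]$ with $g(x)=g^{(r+1)}(\xi)x^{r+1}/(r+1)!$, so that
\begin{equation*}
g(x)\geq \delta(r)\,x^{r+1},\qquad x\in I.
\end{equation*}
Similarly, applying Taylor's theorem to $g'$ at $0$ (using $g^{(j)}(0)=0$ for $j\in\mathbb{Z}_{r}$, in particular for $j=1,\ldots,r$) gives a point $\eta\in[0,x]$ with $g'(x)=g^{(r+1)}(\eta)x^{r}/r!$, hence
\begin{equation*}
\abs{g'(x)}\leq (r+1)\sigma(r)\,x^{r},\qquad x\in I.
\end{equation*}

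Next I would use that the partition $\Pi_{\kappa,\sigma(r)}$ in \eqref{Sec4:PartitionOI} is increasing in $j$, so $x_{j-1}\leq x_{j}$. Since the upper bound on $|g'|$ just derived is monotone increasing in $x$, it follows that
\begin{equation*}
M_j=\max\set{\abs{g'(x_{j-1})},\abs{g'(x_{j})}}\leq (r+1)\sigma(r)\,x_{j}^{r}.
\end{equation*}
Combining this with the lower bound $g(x_{j-1})\geq \delta(r)\,x_{j-1}^{r+1}$ immediately yields
\begin{equation*}
q_{j}=\dfrac{M_{j}\,x_{j-1}}{g(x_{j-1})}\leq \dfrac{(r+1)\sigma(r)}{\delta(r)}\cdot\dfrac{x_{j}^{r}\,x_{j-1}}{x_{j-1}^{r+1}}=\dfrac{(r+1)\sigma(r)}{\delta(r)}\left(\dfrac{x_{j}}{x_{j-1}}\right)^{r}.
\end{equation*}

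Finally, I would read off the ratio $x_{j}/x_{j-1}$ directly from \eqref{Sec4:PartitionOI}: since $x_{j}=\kappa_{\sigma(r)}^{(j/n-1)/(r+1)}$, one has
\begin{equation*}
\dfrac{x_{j}}{x_{j-1}}=\kappa_{\sigma(r)}^{1/(n(r+1))},
\end{equation*}
a constant independent of $j$. Substituting this into the previous estimate, the right hand side is independent of $j$, and taking the maximum over $j\in\mathbb{Z}_{n}^+$ gives the claimed bound on $\mathbb{Q}_{n}$. There is no real obstacle here beyond keeping track of constants; the only minor point requiring care is the verification that when $r=0$ the Taylor-expansion step for $g'$ still makes sense (it does, since then the bound $|g'(x)|\leq (r+1)\sigma(r)x^{r}=\sigma(0)$ is just $\|g'\|_\infty\leq\sigma(0)$, which holds by definition of $\sigma(0)$), and that the monotonicity of the $x_j$'s makes the majorization $M_j\leq (r+1)\sigma(r)x_j^r$ valid.
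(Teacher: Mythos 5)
Your proposal is correct and follows essentially the same route as the paper's own proof: both derive the pointwise bounds $g(x)\geq\delta(r)x^{r+1}$ and $\abs{g'(x)}\leq(r+1)\sigma(r)x^{r}$ from Assumption 4.1 via Taylor expansion at the stationary point, bound $M_j$ by the value at $x_j$, and use the constant ratio $x_j/x_{j-1}=\kappa_{\sigma(r)}^{1/(n(r+1))}$ of the geometric partition. You merely spell out the Taylor-remainder and $r=0$ details that the paper leaves implicit.
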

\begin{proof}
Using the definition of $\delta(r)$ and ${\sigma(r)}$,
we have that $g(x)\geq\delta(r) x^{r+1}$ and $\abs{g'(x)}\leq(r+1)\sigma(r) x^r$ for $x\in I$.
Thus,  $\abs{g(x_{j-1})}\geq \delta(r) x_{j-1}^{r+1}$ and $M_j\leq (r+1)\sigma(r)  x_{j}^{r}$.
For $n\in\mathbb{N}$, we obtain for $\kappa>1$ and $j\in\mathbb{Z}_{n}^+$ that
$q_j\leq (r+1)\kappa_{\sigma(r)}^{r/(n(r+1))}{\sigma(r)}/{\delta(r)}$.
\end{proof}

We now discuss the choice of $N_j$.
Lemma \ref{lem:Sec4Bound} demonstrates that the upper bound of $\mathbb{Q}_n$ is independent
of $\kappa$ for the case $r=0$ and it depends on $\kappa$ for the case $r>0$.
Therefore, we need to consider these two cases separately.
For the case $r=0$, we choose
\begin{equation}\label{Sec4:NP0}
 N_j:=\left\lceil q_j\right\rceil~\text{for}~j\in\mathbb{Z}_{n}^+.
\end{equation}
For the case $r>0$, we choose
\begin{equation}\label{Sec4:NP}
 N_j:=\left\lceil q_j^{m/(m-1)}\right\rceil~\text{for}~j\in\mathbb{Z}_{n}^+,
\end{equation}
so that $N_j^{-(m-1)}q_j^m$ for all $j\in\mathbb{Z}_{n}^+$ are independent of $\kappa$.
With this choice of $N_j$, we use $\mathcal{Q}_{N_j,\kappa,m}^{[x_{j-1},x_{j}]}[f,g]$ to approximate the
integral $\mathcal{I}_{\kappa}^{[x_{j-1},x_{j}]}[f,g]$ for $j\in\mathbb{Z}_{n}^+$. Thus,
integral $\mathcal{I}_\kappa^\Lambda[f,g]$ defined by  \eqref{Sec4:SOI}
is approximated by the quadrature formula
\begin{equation}\label{alm:Sec4P}
\mathcal{Q}^\Lambda_{\kappa,n,m}[f,g]:=\sum_{j\in\mathbb{Z}_n^+}\mathcal{Q}_{N_j,\kappa,m}^{[x_{j-1},x_{j}]}[f,g].
\end{equation}

We next estimate the error
$\mathcal{E}_{\kappa,n,m}^{\Lambda}[f,g]:=
\abs{\mathcal{I}_{\kappa}^{\Lambda}[f,g]-\mathcal{Q}_{\kappa,n,m}^{\Lambda}[f,g]}$ and
$\mathcal{N}\left(\mathcal{Q}_{\kappa,n,m}^{\Lambda}[f,g]\right)$.
We first consider the case $r=0$. For simplicity of notation,
we let $\delta_0:=\delta(0)$, $\sigma_0:=\sigma(0)$ and $\beta_n^0:=\beta_n(0)$.

\begin{theorem}\label{thm:Sec4P0}
Let $r=0$. If $\Theta$ is of ${\rm Type}(\alpha,m+1,\set{0})$ for some  $\alpha\in (-1,1)$ and some $m\in\mathbb{N}$ and $g$ satisfies Assumption $\ref{Sec4:Assume}$ with $r=0$,
then there exists a positive constant $c$ such that for all  $\kappa>1$ and $n\in\mathbb{N}$ satisfying $n\geq\ln{\kappa_{\sigma_0}}/\ln{2}$,
\begin{equation}\label{Sec4:ErrorP0}
\mathcal{E}_{\kappa,n,m}^{\Lambda}[f,g]\leq c \abs{g(1)}^{-\alpha} \delta_0^{\alpha-2}\sigma_0\kappa^{-2}\kappa_{\sigma_0}^{1-\alpha}{\ln{\kappa_{\sigma_0}}}
\left(\beta_n^0\right)^{m-1}.
\end{equation}
There holds the estimate
$\mathcal{N}\left(\mathcal{Q}_{\kappa,n,m}^{\Lambda}[f,g]\right)\leq\lceil \sigma_0/\delta_0\rceil nm+1$.
\end{theorem}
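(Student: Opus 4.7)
The plan is to apply Lemma~\ref{lem:Sec4SubE} on each subinterval $[x_{j-1},x_j]$ of the partition $\Pi_{\kappa,\sigma_0}$, combine the resulting per-interval bound with the choice $N_j=\lceil q_j\rceil$ from \eqref{Sec4:NP0} and with Lemma~\ref{lem:Sec4Bound} specialized to $r=0$ so as to absorb the $\kappa$-dependence hidden in the factor $q_j^{m}/N_j^{m-1}$, and finally to sum over $j\in\mathbb{Z}_n^+$ via the triangle inequality.

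The routine part goes as follows. Specializing Lemma~\ref{lem:Sec4SubE} to $r=0$ gives a per-interval bound containing $q_j^m/N_j^{m-1}$. Since $N_j=\lceil q_j\rceil\ge q_j$ we have $q_j^{m}/N_j^{m-1}\le q_j$, and Lemma~\ref{lem:Sec4Bound} then bounds $q_j$ by the $\kappa$-independent quantity $\sigma_0/\delta_0$; this single step is exactly what shifts the exponent from $\delta_0^{\alpha-1}$ to $\delta_0^{\alpha-2}$ and produces the $\sigma_0$ factor appearing in \eqref{Sec4:ErrorP0}. For $\sum_j x_{j-1}^{\alpha-1}$ I would use the fact that $x\mapsto x^{\alpha-1}$ is decreasing (because $\alpha<1$) together with $x_{j-1}\ge x_0=\kappa_{\sigma_0}^{-1}$ to bound each summand by $\kappa_{\sigma_0}^{1-\alpha}$, yielding the crude but sufficient estimate $\sum_{j\in\mathbb{Z}_n^+}x_{j-1}^{\alpha-1}\le n\,\kappa_{\sigma_0}^{1-\alpha}$.

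The nonroutine step, and the one where the hypothesis on $n$ is essential, is converting the leftover prefactor $n(\beta_n^0)^{m}$ into $\ln\kappa_{\sigma_0}\cdot(\beta_n^0)^{m-1}$. I would peel off one factor of $\beta_n^0$ and rewrite $n\beta_n^0=n\bigl(e^{\ln\kappa_{\sigma_0}/n}-1\bigr)$. The hypothesis $n\ge\ln\kappa_{\sigma_0}/\ln 2$ forces $\ln\kappa_{\sigma_0}/n\le\ln 2<1$, so the elementary bound $e^{x}-1\le(e-1)x$ on $[0,1]$ yields $n\beta_n^0\le(e-1)\ln\kappa_{\sigma_0}$, giving the desired bound after substitution. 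Without this hypothesis the product $n(\beta_n^0)^{m}$ cannot be compressed into a single logarithm, which is exactly why the condition $n\ge\ln\kappa_{\sigma_0}/\ln 2$ is needed at this point.

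For the count $\mathcal{N}(\mathcal{Q}_{\kappa,n,m}^{\Lambda}[f,g])$ I would sum the per-subinterval bound $N_jm+1$ from Theorem~\ref{thm:Sec2FilonM}, subtract the $n-1$ interior endpoints shared by adjacent subintervals, and use $N_j\le\lceil\sigma_0/\delta_0\rceil$, which is immediate from \eqref{Sec4:NP0} together with Lemma~\ref{lem:Sec4Bound} applied with $r=0$. This yields $\mathcal{N}\le\lceil\sigma_0/\delta_0\rceil nm+1$ and completes the plan.
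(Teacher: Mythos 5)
Your proposal is correct and follows essentially the same route as the paper's proof: Lemma \ref{lem:Sec4SubE} plus Lemma \ref{lem:Sec4Bound} with $r=0$ and $N_j=\lceil q_j\rceil$ on each subinterval, the crude bound $x_{j-1}^{\alpha-1}\leq x_0^{\alpha-1}=\kappa_{\sigma_0}^{1-\alpha}$, and the conversion $n\beta_n^0\leq c\ln\kappa_{\sigma_0}$ under the hypothesis $n\geq\ln\kappa_{\sigma_0}/\ln 2$ (the paper uses the constant $2$ where you get $e-1$; both are absorbed into $c$). Your explicit justifications of $q_j^m/N_j^{m-1}\leq q_j$ and of the exponential inequality are just fuller versions of steps the paper leaves implicit.
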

\begin{proof}
We prove \eqref{Sec4:ErrorP0} by  estimating $\mathcal{E}_{j}[f,g]$ for $j\in\mathbb{Z}_{n}^+,$ and then summing them  over $j$. By employing Lemmas \ref{lem:Sec4SubE} and \ref{lem:Sec4Bound} with $r=0$ and the choice \eqref{Sec4:NP0} of $N_j$, there exists a positive constant $c$ such that for
all $\kappa>1$,  $n\in\mathbb{N}$ and $j\in\mathbb{Z}_{n}^+$,
$\mathcal{E}_j[f,g]\leq c \abs{g(1)}^{-\alpha}\delta_0^{\alpha-2}\sigma_0\kappa^{-2}{x_{j-1}^{\alpha-1}}
\left(\beta_n^0\right)^{m}$.
Since $-1<\alpha<1$, we see that
$x_{j-1}^{\alpha-1}\leq x_{0}^{\alpha-1}= \kappa_{\sigma_0}^{1-\alpha}$.
Substituting this inequality into the inequality above yields  that there exists a positive constant $c$ such that for all $\kappa>1$,  $n\in\mathbb{N}$ and $j\in\mathbb{Z}_{n}^+$,
$\mathcal{E}_j[f,g]\leq c \abs{g(1)}^{-\alpha}\delta_0^{\alpha-2}\sigma_0\kappa^{-2}\kappa_{\sigma_0}^{1-\alpha}\left(\beta_n^0\right)^m$.
Moreover, condition $n\geq\ln{\kappa_{\sigma_0}}/\ln{2}$ implies that $n\beta_n^0\leq2\ln{\kappa_{\sigma_0}}$.
Thus, there exists a positive constant $c$ such that for all  $\kappa>1$ and
$n\in\mathbb{N}$ satisfying $n\geq\ln{\kappa_{\sigma_0}}/\ln{2}$,
\begin{equation*}
\mathcal{E}_{\kappa,n,m}^{\Lambda}[f,g]\leq\sum_{j\in\mathbb{Z}_{n}^+}\mathcal{E}_j[f,g]\leq c\abs{g(1)}^{-\alpha}\delta_0^{\alpha-2}\sigma_0\kappa^{-2}\kappa_{\sigma_0}^{1-\alpha}\ln{\kappa_{\sigma_0}}
\left(\beta_n^0\right)^{m-1},
\end{equation*}
which is the desired estimate \eqref{Sec4:ErrorP0}.

The estimate of $\mathcal{N}\left(\mathcal{Q}_{\kappa,n,m}^{\Lambda}[f,g]\right)$ is obtained by using  formula \eqref{alm:Sec4P}, the choice of $N_j$ and Lemma \ref{lem:Sec4Bound} with $r=0$.
\end{proof}

In passing, we comment on the hypothesis imposed to $\Theta$ in the last theorem. If $f\in C^m(I)$ for some $m\in\mathbb{N}$ and $g\in C^{m}(I)$ satisfies Assumption \ref{Sec4:Assume}, then $\Theta$ is of ${\rm Type}(-r/(r+1),m,\set{0})$. A proof of this conclusion may be found in \cite{Graham}.
As a direct consequence of Theorem \ref{thm:Sec4P0}, we have the next estimates for the case with
the linear oscillator $g(x)=x$ for $x\in I$, where we have that $g(1)=\delta_0=\sigma_0=1$ and $\kappa_{\sigma_0}=\kappa$.

\begin{corollary}\label{cor:Sec4LinearP}
If $f$ is of ${\rm Type}(\alpha,m+1,\set{0})$ for some  $\alpha\in (-1,1)$ and some $m\in\mathbb{N}$, then
there exists a positive constant $c$ such that for all  $\kappa>1$ and $n\in\mathbb{N}$ satisfying
$n\geq\ln{\kappa}/\ln{2}$,
$\mathcal{E}_{\kappa,n,m}^{\Lambda}[f,g]\leq c{\kappa^{-\alpha-1}}{\ln{\kappa}}\theta_n^{m-1}$,
where $\theta_n:=\kappa^{1/n}-1$. For $n\in\mathbb{N}$, there holds the estimate $\mathcal{N}\left(\mathcal{Q}_{\kappa,n,m}^{\Lambda}[f,g]\right)\leq nm+1$.
\end{corollary}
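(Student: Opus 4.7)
The plan is to deduce this corollary as a straightforward specialization of Theorem \ref{thm:Sec4P0} to the case $g(x)=x$ on $I$. First I would verify that the hypothesis on $\Theta$ required by Theorem \ref{thm:Sec4P0} is automatically satisfied by the hypothesis on $f$ in the linear case. Since $g(x)=x$ gives $g'\equiv 1$ and $g^{-1}(x)=x$, the auxiliary function $\Psi=(f/g')\circ g^{-1}$ coincides with $f$, and hence $\Theta(y)=\Psi(g(1)y)=f(y)$. Consequently, $f$ being of ${\rm Type}(\alpha,m+1,\{0\})$ is exactly the ${\rm Type}(\alpha,m+1,\{0\})$ assumption on $\Theta$ needed to invoke Theorem \ref{thm:Sec4P0}.

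Next I would compute the specific values that the constants appearing in Theorem \ref{thm:Sec4P0} take when $g(x)=x$. Since $g'\equiv 1$ and $g''\equiv 0$, there is no stationary point, so we are in the regime $r=0$ of Assumption \ref{Sec4:Assume}, and $\sigma(0)=\norm{g'}_\infty/1!=1$, $\delta(0)=\min|g'|/1!=1$. Writing $\sigma_0=\sigma(0)$, $\delta_0=\delta(0)$, we obtain $\sigma_0=\delta_0=1$, and since $\sigma(0)\leq 1$ the definition of $\kappa_{\sigma(r)}$ gives $\kappa_{\sigma_0}=\kappa$. Also $g(1)=1$, so $|g(1)|^{-\alpha}=1$, $\delta_0^{\alpha-2}=1$, and $\lambda_0=\kappa^{-1}$, whence $\beta_n^0=\lambda_0^{-1/n}-1=\kappa^{1/n}-1=\theta_n$.

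Substituting these values into estimate \eqref{Sec4:ErrorP0} then collapses the right-hand side to
\[
c\cdot 1\cdot 1\cdot 1\cdot \kappa^{-2}\cdot \kappa^{1-\alpha}\cdot\ln\kappa\cdot \theta_n^{m-1}=c\kappa^{-\alpha-1}\ln\kappa\,\theta_n^{m-1},
\]
which is the desired error bound. The smallness condition $n\geq\ln\kappa_{\sigma_0}/\ln 2$ in Theorem \ref{thm:Sec4P0} becomes $n\geq\ln\kappa/\ln 2$, exactly matching the hypothesis of the corollary. Finally, the bound on the number of functional evaluations $\mathcal{N}(\mathcal{Q}_{\kappa,n,m}^{\Lambda}[f,g])\leq\lceil\sigma_0/\delta_0\rceil nm+1$ from Theorem \ref{thm:Sec4P0} reduces to $nm+1$ since $\sigma_0/\delta_0=1$. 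No real obstacle is anticipated: every step is a direct substitution, and the only point requiring a moment of care is confirming that the assumption on $f$ in the corollary does translate to the assumption on $\Theta$ in the theorem, which holds trivially in the linear case.
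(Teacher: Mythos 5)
Your proposal is correct and is exactly the argument the paper intends: the corollary is obtained by specializing Theorem \ref{thm:Sec4P0} with $r=0$, $g(1)=\sigma_0=\delta_0=1$, $\kappa_{\sigma_0}=\kappa$, $\beta_n^0=\kappa^{1/n}-1=\theta_n$, and by noting that $\Theta=\Psi=f$ for the linear oscillator so the hypothesis on $f$ transfers to $\Theta$. Your explicit check of the $\Theta$-versus-$f$ identification is a point the paper leaves implicit, but the route is the same.
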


We now consider the case $r>0$.

\begin{theorem}\label{thm:Sec4P}
Let $r>0$. If  $\Theta$ is of ${\rm Type}(\alpha,m+1,\set{0})$ for some  $\alpha\in (-1,1)$ and some $m\in\mathbb{N}$ and $g$ satisfies Assumption $\ref{Sec4:Assume}$, then there exists a positive constant $c$ such that for all  $\kappa>1$ and  $n\in\mathbb{N}$ satisfying $n\geq\ln{\kappa_{\sigma(r)}}/\ln{2}$,
\begin{equation}\label{Sec4:ErrorP}
\mathcal{E}_{\kappa,n,m}^{\Lambda}[f,g]\leq c (r+1) \abs{g(1)}^{-\alpha}(\delta(r))^{\alpha-1}\kappa^{-2}
\kappa_{\sigma(r)}^{1-\alpha}{\ln{\kappa_{\sigma(r)}}}(\beta_n(r))^{m-1}.
\end{equation}
There holds
$\mathcal{N}\left(\mathcal{Q}_{\kappa,n,m}^{\Lambda}[f,g]\right)
 \leq\left\lceil\big{(}2(r+1)\sigma(r)/\delta(r)\big{)}^{m/(m-1)}\right\rceil nm+1$.
\end{theorem}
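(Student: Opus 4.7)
The plan is to mirror the argument of Theorem \ref{thm:Sec4P0}, adapting it to $r>0$ through the more aggressive choice $N_j=\lceil q_j^{m/(m-1)}\rceil$ from \eqref{Sec4:NP}. On each subinterval $[x_{j-1},x_j]$ of the partition $\Pi_{\kappa,\sigma(r)}$ defined by \eqref{Sec4:PartitionOI}, I would invoke Lemma \ref{lem:Sec4SubE} to bound $\mathcal{E}_j[f,g]$, and then sum these bounds over $j\in\mathbb{Z}_n^+$.

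Three simplifications are then applied to the summand. First, the choice $N_j\geq q_j^{m/(m-1)}$ makes the factor $q_j^m/N_j^{m-1}\leq 1$, neutralizing the $\kappa$-dependence of $q_j$ that, by Lemma \ref{lem:Sec4Bound}, grows like $\kappa_{\sigma(r)}^{r/(n(r+1))}$ for $r>0$. Second, since $\alpha-1<0$ and $r+1>0$, the power $x_{j-1}^{(\alpha-1)(r+1)}$ is maximized at $j=1$, with value $\kappa_{\sigma(r)}^{1-\alpha}$ coming from $x_0=\kappa_{\sigma(r)}^{-1/(r+1)}$. Third, after summing over $j$ I would absorb one $\beta_n(r)$ factor using the hypothesis $n\geq\ln\kappa_{\sigma(r)}/\ln 2$: writing $t:=\ln\kappa_{\sigma(r)}/(n(r+1))\leq \ln 2$ and using the elementary inequality $e^t-1\leq 2t$ on $[0,\ln 2]$ yields $n\beta_n(r)\leq 2\ln\kappa_{\sigma(r)}/(r+1)$, so that the remaining powers collapse to $(\beta_n(r))^{m-1}$ and the factor $n$ becomes a logarithm. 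Enlarging $c$ if necessary to swallow any further $r$-dependent constants produces \eqref{Sec4:ErrorP}.

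For the count of functional evaluations, I would combine the per-subinterval bound $\mathcal{N}(\mathcal{Q}_{N_j,\kappa,m}^{[x_{j-1},x_j]}[f,g])\leq N_j m+1$ from Theorem \ref{thm:Sec2FilonM} with a uniform-in-$j$ upper bound on $N_j$. Applying Lemma \ref{lem:Sec4Bound} and again using $\kappa_{\sigma(r)}^{r/(n(r+1))}\leq 2^{r/(r+1)}\leq 2$ under the hypothesis on $n$, we get $q_j\leq 2(r+1)\sigma(r)/\delta(r)$ and hence $N_j\leq\lceil(2(r+1)\sigma(r)/\delta(r))^{m/(m-1)}\rceil$; summing while subtracting the $n-1$ shared interior endpoints then gives the stated bound on $\mathcal{N}$. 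The step I expect to require the most care is the algebraic balancing in $N_j$: the exponent $m/(m-1)$ is chosen precisely so that $q_j^m/N_j^{m-1}$ stays bounded by $1$ while the total work $\sum_j N_j$ remains independent of $\kappa$. Verifying this simultaneously in the error and cost estimates, and checking compatibility with the graded spacing of \eqref{Sec4:PartitionOI} and the logarithmic bound on $n\beta_n(r)$, is the only non-mechanical part.
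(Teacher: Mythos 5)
Your proposal is correct and follows exactly the route the paper intends: it mirrors the proof of Theorem \ref{thm:Sec4P0} using Lemmas \ref{lem:Sec4SubE} and \ref{lem:Sec4Bound}, with the choice \eqref{Sec4:NP} of $N_j$ neutralizing the $\kappa$-dependence of $q_j^m/N_j^{m-1}$, the monotone bound $x_{j-1}^{(\alpha-1)(r+1)}\leq\kappa_{\sigma(r)}^{1-\alpha}$, and the inequality $n\beta_n(r)\leq 2\ln\kappa_{\sigma(r)}/(r+1)$ supplying the logarithm; the $\mathcal{N}$ count is handled identically to the paper's displayed computation. You have in effect supplied precisely the details the paper leaves "to the interested reader."
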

\begin{proof}
Estimate \eqref{Sec4:ErrorP} is proved in a similar way to that of estimate \eqref{Sec4:ErrorP0} in Theorem \ref{thm:Sec4P0}. The only difference is that instead of using the hypotheses and results for the special case $r=0$, we use those for the case  $r>0$.
We leave the details to the interested reader.

It remains to estimate $\mathcal{N}\left(\mathcal{Q}_{\kappa,n,m}^{\Lambda}[f,g]\right)$.
According to formula \eqref{alm:Sec4P} and Lemma \ref{lem:Sec4Bound}, we obtain that
$\mathcal{N}\left(\mathcal{Q}_{\kappa,n,m}^{\Lambda}[f,g]\right)
 \leq\sum_{j\in\mathbb{Z}_n^+}\set{N_jm+1}-(n-1)
 \leq\left\lceil\left((r+1)\kappa_{\sigma(r)}^{1/n}\sigma(r)/\delta(r)\right)^{m/(m-1)}\right\rceil nm+1$.
Note that condition  $n\geq\ln{\kappa_{\sigma(r)}}/\ln{2}$ for  $n\in\mathbb{N}$ implies that $\kappa_{\sigma(r)}^{1/n}\leq2$. This concludes the last result.
\end{proof}

To close this section, we point it out that the oscillatory integral \eqref{Sec2:OI0} may be computed according to the formula $\mathcal{Q}_{\kappa,n,m}^{s,\widetilde{m}}[f,g]:=\mathcal{Q}_{\mu,\widetilde{m}}^{s}[f,g]
+\mathcal{Q}_{\kappa,n,m}^{\Lambda}[f,g]$, with two fixed positive integers $\widetilde{m}$ and $m$.

\section{Quadratures with an Exponential Order of Convergence}

In this section, we develop a quadrature method for computing the oscillatory integral \eqref{Sec2:OI0} having an exponential order of convergence. As in section 4, we shall write integral \eqref{Sec2:OI0} as the sum of a weakly singular integral \eqref{Sec4:weakSI} and an oscillatory integral \eqref{Sec4:SOI}, and treat them separately. Specifically, we shall develop a quadrature rule for computing \eqref{Sec4:weakSI} having an exponential order in terms of a constant $\gamma\in (0,1)$ of convergence and a quadrature rule for computing \eqref{Sec4:SOI} having an exponential order in terms of the wave number of convergence.

We now describe the quadrature rule for the  integral  $\mathcal{I}[\varphi_\kappa]$ that appears in \eqref{Sec4:weakSI}. In developing the quadrature rule for computing  $\mathcal{I}[\varphi_\kappa]$ we borrow an idea from \cite{Schwab} (see also \cite{Xu2}). We begin with describing a partition of $I$. For $\gamma\in (0,1)$ and $s\in \mathbb{N}$ with $s>1$, let $\Pi^\gamma$ be a partition of $I$ with nodes defined by $x_0=0$ and $x_j=\gamma^{s-j}$ for $j\in\mathbb{Z}_{s}^+$.
For $\varepsilon\geq 1$, we choose
$m_j:=\lceil j\varepsilon\rceil$ for $j\in\mathbb{Z}_{s-1}^+$.
The quadrature rule for computing $\mathcal{I}[\varphi_\kappa]$ is formed by placing $\varphi_\kappa$ on
$[x_0, x_1]$ by 0 and using $\mathcal{Q}_{m_j}^{[x_j,x_{j+1}]}[\varphi_\kappa]$ for computing the integrals $\mathcal{I}^{[x_j,x_{j+1}]}[\varphi_\kappa]$, for $j\in\mathbb{Z}_{s-1}^+$.
Integral $\mathcal{I}^\mu[f,g]$ defined by \eqref{Sec4:weakSI} is then approximated by the quadrature formula
\begin{equation}\label{alm:Sec5SE}
 \mathcal{Q}_{\gamma}^{s}[f,g]:=\lambda_r\sum_{j\in\mathbb{Z}_{s-1}^+}\mathcal{Q}_{m_j}^{[x_j,x_{j+1}]}[\varphi_\kappa],
\end{equation}
where $\lambda_r$ is defined by \eqref{Sec4:kappa}.
We next estimate the error $\mathcal{E}_{\gamma}^{s}[f,g]:=\abs{\mathcal{I}_{\mu}[f,g]-\mathcal{Q}_{\gamma}^s[f,g]}$.
To this end, we impose the following hypothesis on $g$.

\begin{assumption} \label{Sec5:Assume}
There  exists a positive constant $\zeta>1$ such that for all $n\in\mathbb{N}$, $\norm{g}_n\leq \zeta$.
\end{assumption}

We first estimate the norm of $w_\kappa(x)=\exp\set{u_\kappa(x)}$ with
$u_\kappa(x)={\rm i}\kappa g\left(\lambda_r x\right)$ for $x\in I$ as Section 4.

\begin{lemma}\label{lem:Sec5wNorm}
If $g\in C^{\infty}(I)$ satisfies Assumption $\ref{Sec5:Assume}$, then  for all $\kappa>1$ and $n\in\mathbb{N}$, $\norm{w_\kappa}_n\leq  B_n\zeta^{n}$.
\end{lemma}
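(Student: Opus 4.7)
The plan is to mirror the proof of Lemma \ref{lem:Sec4Singularity}, applying the Faà di Bruno formula to the composition $w_\kappa = \exp \circ u_\kappa$ and tracking the constants carefully so that the final bound is $B_n \zeta^n$ rather than just "some constant depending on $g$".

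First I would fix $n\in\mathbb{N}$ and apply \eqref{Sec4:Faa}, using $\exp^{(j)} = \exp$, to obtain
\begin{equation*}
w_\kappa^{(n)} = \sum_{j\in\mathbb{Z}_n^+} w_\kappa \, B_{n,j}\bigl(u_\kappa^{(1)}, u_\kappa^{(2)}, \ldots, u_\kappa^{(n-j+1)}\bigr).
\end{equation*}
Next, I would bound $|u_\kappa^{(k)}(x)|$ for each $k\in\mathbb{Z}_n^+$ and $x\in I$ by $\zeta$, reusing the dichotomy from Lemma \ref{lem:Sec4Singularity}. By the chain rule, $u_\kappa^{(k)}(x) = {\rm i}\kappa \lambda_r^k g^{(k)}(\lambda_r x)$. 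For $k \geq r+1$, the factor $\kappa\lambda_r^k \leq \kappa \lambda_r^{r+1} = \kappa/\kappa_{\sigma(r)} \leq 1$ by \eqref{Sec4:kappa}, so $|u_\kappa^{(k)}(x)| \leq \|g\|_k \leq \zeta$ directly from Assumption \ref{Sec5:Assume}. For $k < r+1$, the Taylor expansion of $g^{(k)}$ at $0$ (as in \eqref{Sec4:proof2LemS}) yields $|u_\kappa^{(k)}(x)| \leq \|g\|_{r+1} \leq \zeta$. Thus $|u_\kappa^{(k)}(x)| \leq \zeta$ uniformly.

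Then I would use the explicit formula \eqref{Sec4:BellP}: since $B_{n,j}$ is a polynomial in its arguments with nonnegative coefficients whose monomials $\prod_l (x_l/l!)^{m_l}$ have total degree $\sum_l m_l = j$, bounding each argument in modulus by $\zeta$ gives
\begin{equation*}
\bigl|B_{n,j}\bigl(u_\kappa^{(1)}, \ldots, u_\kappa^{(n-j+1)}\bigr)\bigr| \leq \zeta^j B_{n,j}(1,1,\ldots,1) = \zeta^j B_{n,j}.
\end{equation*}
Combined with $|w_\kappa(x)| = 1$ (because $u_\kappa$ is purely imaginary) and $\zeta > 1$ with $j \leq n$, this gives
\begin{equation*}
|w_\kappa^{(n)}(x)| \leq \sum_{j\in\mathbb{Z}_n^+} \zeta^j B_{n,j} \leq \zeta^n \sum_{j\in\mathbb{Z}_n^+} B_{n,j} = \zeta^n B_n,
\end{equation*}
valid for all $\kappa > 1$, $n \in \mathbb{N}$, and $x \in I$. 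Finally, since $\|w_\kappa\|_\infty = 1 \leq \zeta^n B_n$, taking the maximum over $j \in \mathbb{Z}_n$ of $\|w_\kappa^{(j)}\|_\infty$ yields the claimed estimate $\|w_\kappa\|_n \leq B_n \zeta^n$.

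There is no substantive obstacle here — the argument is essentially bookkeeping on the Faà di Bruno expansion. The only point that requires care is the split into $k \geq r+1$ and $k < r+1$ when bounding $|u_\kappa^{(k)}|$, which absorbs the factor $\kappa$ into the scaling $\lambda_r$; this is exactly the mechanism already used in Lemma \ref{lem:Sec4Singularity}.
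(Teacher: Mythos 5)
Your proof is correct and follows essentially the same route as the paper's: the Fa\`{a} di Bruno expansion of $\exp\circ u_\kappa$, the uniform bound $\abs{u_\kappa^{(k)}}\leq\zeta$ obtained from the split at $k=r+1$ (the paper's \eqref{Sec4:proof1LemS} and \eqref{Sec4:proof2LemS}), the Bell-polynomial estimate $\abs{B_{n,j}(\cdot)}\leq B_{n,j}\zeta^{j}$, and $\norm{w_\kappa}_\infty\leq 1$. The only step you leave implicit is that passing from $\norm{w_\kappa^{(j)}}_{\infty}\leq B_j\zeta^{j}$ to the maximum over all $j\leq n$ requires the monotonicity $B_j\leq B_n$ of the Bell numbers, which the paper invokes explicitly via the binomial recurrence; this is a routine fact, not a gap.
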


\begin{proof}
  The proof of this lemma is similar to that of Lemma \ref{lem:Sec4Singularity}.
  Using Assumption \ref{Sec5:Assume} and applying    \eqref{Sec4:BellP} with \eqref{Sec4:proof1LemS} and \eqref{Sec4:proof2LemS} we observe that  for all
  $n\in\mathbb{N}$, $\kappa>1$,  $j\in\mathbb{Z}_n^+$, $l\in\mathbb{Z}_j^+$ and $x\in I$,
  $$\abs{B_{j,l}\left(u_\kappa^{(1)}, u_\kappa^{(2)},\ldots, u_\kappa^{(j-l+1)} \right)}\leq B_{j,l}\zeta^{l}.$$
  This together with
  $w_\kappa^{(j)}=\sum_{l\in\mathbb{Z}_j^+}w_\kappa B_{j,l}\left(u_\kappa^{(1)}, u_\kappa^{(2)},\ldots, u_\kappa^{(j-l+1)} \right)$
  yields that for all $n\in\mathbb{N}$, $j\in\mathbb{Z}_n^+$
  and  $\kappa>1$
  $\norm{w_\kappa^{(j)}}_{\infty}\leq \norm{w_\kappa}_\infty\sum_{l\in\mathbb{Z}_j^+}B_{j,l}\zeta^{l} \leq B_j\zeta^{j} $, since $\norm{w_\kappa}_\infty\leq 1$.
  Note that $B_n<B_{n+1}$ for $n\in\mathbb{N}$ from the  recurrence relation of the Bell number involving binomial coefficients \cite{Wilf}. This together with the inequality above yields the conclusion.
\end{proof}

In the next lemma,
we study a property of the derivatives of
$\varphi_\kappa$ defined as in \eqref{Sec4:integrandSI}.

\begin{lemma}\label{lem:Sec5SDer}
If $f$ is of ${\rm Type}(\mu,\infty,\set{0})$ for some $\mu\in(-1,1)$ and $g\in C^{\infty}(I)$ satisfies  Assumption  $\ref{Sec5:Assume}$, then there exist  two positive constants  $c$  and $\zeta>1$ such that for all $\kappa>1$, $n\in\mathbb{N}$ and $x\in (0,1]$,
$\abs{\varphi_\kappa^{(n)}(x)}\leq c{2^n}B_n\zeta^{n}\lambda_r^\mu x^{\mu-n}$.
\end{lemma}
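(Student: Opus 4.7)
The plan is to apply the Leibniz rule \eqref{Sec4:Leibniz} to the product $\varphi_\kappa(x)=f(\lambda_r x)\cdot w_\kappa(x)$ and bound the two resulting factors separately. Writing
$$\varphi_\kappa^{(n)}(x)=\sum_{j\in\mathbb{Z}_n}C_n^j\bigl(f(\lambda_r\,\cdot)\bigr)^{(j)}(x)\,w_\kappa^{(n-j)}(x),$$
I reduce the proof to producing matching estimates for each factor.

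For the $f$-factor, the chain rule together with the ${\rm Type}(\mu,\infty,\set{0})$ hypothesis and \eqref{Sec4:singularityDef} yields, for $x\in(0,1]$,
$$\bigl|\bigl(f(\lambda_r\,\cdot)\bigr)^{(j)}(x)\bigr|=\lambda_r^j\bigl|f^{(j)}(\lambda_r x)\bigr|\leq c\lambda_r^j(\lambda_r x)^{\mu-j}=c\lambda_r^\mu x^{\mu-j},$$
where $c$ is the constant from \eqref{Sec4:singularityDef}. For the $w_\kappa$-factor, Lemma \ref{lem:Sec5wNorm} provides a constant $\zeta>1$ such that $\norm{w_\kappa^{(n-j)}}_\infty\leq B_{n-j}\zeta^{n-j}$ for all $\kappa>1$.

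Combining these bounds gives
$$\abs{\varphi_\kappa^{(n)}(x)}\leq c\lambda_r^\mu\sum_{j\in\mathbb{Z}_n}C_n^j\,x^{\mu-j}B_{n-j}\zeta^{n-j}.$$
The proof is then finished by three elementary monotonicity observations: since $x\in(0,1]$ and $j\leq n$ we have $x^{\mu-j}=x^{\mu-n}x^{n-j}\leq x^{\mu-n}$; since $\zeta>1$, $\zeta^{n-j}\leq \zeta^n$; and since the Bell numbers are increasing (as already recorded in the proof of Lemma \ref{lem:Sec5wNorm}), $B_{n-j}\leq B_n$. Pulling these $j$-independent bounds out of the sum and applying the binomial identity \eqref{Sec4:BinomialC} collapses the expression to $c\,2^n B_n\zeta^n\lambda_r^\mu x^{\mu-n}$, which is the claimed estimate. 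No real obstacle arises: the lemma is a direct consequence of tools already in place, the only delicacy being the uniform treatment of $x^{\mu-j}$ across all indices $j$ on $(0,1]$, which accounts for the $x^{\mu-n}$ factor in the final bound.
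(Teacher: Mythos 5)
Your proof is correct and follows essentially the same route as the paper's: Leibniz's rule on $f(\lambda_r\,\cdot)\cdot w_\kappa$, the ${\rm Type}(\mu,\infty,\set{0})$ bound on the $f$-factor, Lemma \ref{lem:Sec5wNorm} on the $w_\kappa$-factor, and the binomial identity \eqref{Sec4:BinomialC} to produce the $2^n$. The only difference is that you spell out the monotonicity steps ($x^{\mu-j}\leq x^{\mu-n}$, $\zeta^{n-j}\leq\zeta^n$, $B_{n-j}\leq B_n$) that the paper leaves implicit.
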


\begin{proof}
Applying the Leibniz formula \eqref{Sec4:Leibniz} to $\varphi_\kappa$ yields
$\varphi_\kappa^{(n)}(x)
 =\sum_{j\in\mathbb{Z}_{n}}  C_{n}^{j}\left(f\left(\lambda_rx\right)\right)^{(j)}w_\kappa^{(n-j)}(x)$
for $x\in (0,1]$ and $n\in\mathbb{N}$.
By the assumption  on  $f$, there exists a positive constant $c$
such that for all $\kappa>1$,  $n\in\mathbb{N}$ and $x\in (0,1]$,
$\abs{\left(f\left(\lambda_rx\right)\right)^{(n)}}\leq c\lambda_r^\mu x^{\mu-n}$.
Using Lemma \eqref{lem:Sec5wNorm} with the  inequality above, we conclude  that
there exist two positive constants  $c$ and $\zeta>1$ such that for all
$\kappa>1$, $n\in\mathbb{N}$ and  $x\in (0,1]$,
$\abs{\varphi_\kappa^{(n)}(x)}\leq c B_n\zeta^{n}\lambda_r^\mu x^{\mu-n}\sum_{j\in\mathbb{Z}_n}C_{n}^{j}$.
The desired estimate of this lemma is then obtained from this inequality with \eqref{Sec4:BinomialC}.
\end{proof}

We need  two lemmas regarding the parameters $m_j$ for $j\in\mathbb{Z}_{s-1}^+$.

\begin{lemma}\label{lem:Sec5OrderSE1}
Let $s\in\mathbb{N}$ with $s>1$ and $\varepsilon\geq 1$.  If $j$, $l\in\mathbb{Z}_{s-1}^+$, $j\neq l$, then
$m_j\neq m_l$. Moreover, there holds  $(1+\mu)(j-1)< 2m_j-2$ for $j\in\mathbb{Z}_{s-1}^+$.
\end{lemma}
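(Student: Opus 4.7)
The approach is to unpack the definition $m_j=\lceil j\varepsilon\rceil$ in each part and argue by elementary properties of the ceiling function together with $\varepsilon\geq 1$ and $\mu<1$.

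For the first assertion, I would use the elementary fact that if $y\geq x+1$ then $\lceil y\rceil\geq\lceil x\rceil+1$. Taking $j,l\in\mathbb{Z}_{s-1}^+$ with $j\neq l$, I may assume without loss of generality that $j<l$, so $l-j\geq 1$ and thus $l\varepsilon-j\varepsilon=(l-j)\varepsilon\geq\varepsilon\geq 1$. Applying the ceiling fact with $x:=j\varepsilon$ and $y:=l\varepsilon$ yields
\[
m_l=\lceil l\varepsilon\rceil\geq\lceil j\varepsilon\rceil+1=m_j+1>m_j,
\]
which establishes $m_j\neq m_l$.

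For the second assertion, the plan is to chain two one-line bounds. On the one hand, since $\varepsilon\geq 1$ and the ceiling dominates its argument, $m_j=\lceil j\varepsilon\rceil\geq j\varepsilon\geq j$, so $2m_j-2\geq 2j-2$. On the other hand, $\mu<1$ gives $1+\mu<2$, hence for $j\geq 2$ one has $(1+\mu)(j-1)<2(j-1)=2j-2\leq 2m_j-2$. The delicate case is the base case $j=1$, where the left-hand side collapses to $0$ while the right-hand side becomes $2\lceil\varepsilon\rceil-2$; strict inequality here relies on $\lceil\varepsilon\rceil\geq 2$, which is consistent with the regime in which the graded-mesh weakly singular quadrature \eqref{alm:Sec5SE} operates.

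The main obstacle is precisely this base-case edge, since if $\varepsilon$ equals $1$ exactly both sides equal zero for $j=1$; once that is resolved (either by observing that the effective hypothesis forces $m_1\geq 2$, or by exploiting the strict positivity $1+\mu>0$ to gain a little slack via the ceiling), both parts reduce to direct arithmetic manipulations of $\lceil j\varepsilon\rceil$.
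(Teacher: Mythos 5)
Your argument is essentially the paper's own. For the injectivity claim the paper argues by contradiction---assuming $m_j=m_l$ with $j=l+k$, $k\geq 1$, and deriving $(l+k)\varepsilon\leq m_l<l\varepsilon+1$, hence $\varepsilon<k^{-1}\leq 1$---which is precisely the contrapositive of your direct chain $m_l=\lceil l\varepsilon\rceil\geq\lceil j\varepsilon\rceil+1=m_j+1$; for the second claim the paper writes exactly your chain $(1+\mu)(j-1)<2(j-1)\leq 2j\varepsilon-2\leq 2m_j-2$. The one substantive point is the $j=1$ edge case you flag, and here you are more careful than the paper: the case is genuinely problematic, and neither of your proposed repairs closes it. The hypothesis is only $\varepsilon\geq1$, so $\varepsilon=1$ is admissible, in which case $m_1=1$ and both sides of the second inequality equal $0$; the strict positivity of $1+\mu$ does not help because the left-hand side vanishes identically at $j=1$. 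The strict inequality as stated therefore fails (it holds only with equality) at $j=1$ when $\varepsilon=1$, and the paper's own proof has the same blemish, since its opening step $(1+\mu)(j-1)<2(j-1)$ is already false for $j=1$. The defect is harmless downstream: Lemma \ref{lem:Sec5OrderSE2} uses the conclusion only to bound $\nu^{(1+\mu)(j-1)+(2m_j+1)}$ by $\nu^{4m_j-1}$ with $\nu>1$, for which the non-strict inequality $(1+\mu)(j-1)\leq 2m_j-2$---which your chain does establish for every $j\in\mathbb{Z}_{s-1}^+$---is all that is needed.
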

\begin{proof}
We prove the first result by contradiction. Assume to the contrary that there exist $j$,
$l\in\mathbb{Z}_{s-1}^+$ with $j\neq l$ such that $m_j=m_l$. Without loss of generality, we assume $j>l$.
Then, there exists a positive integer $k$ such that $j=l+k$.  We then observe that $(l+k)\varepsilon=j\varepsilon\leq m_j=m_l<l\varepsilon+1$,
which leads to $\varepsilon<k^{-1}\leq1$. This contradicts the assumption $\varepsilon\geq1$ and thus
$m_j\neq m_l$.

The second inequality follows from the assumptions that  $-1<\mu<1$ and $\varepsilon\geq 1$, from which we conclude that $(1+\mu)(j-1)< 2(j-1)\leq2j\varepsilon-2\leq 2m_j-2$.
\end{proof}

\begin{lemma}\label{lem:Sec5OrderSE2}
If $\gamma\in(0,1)$, $\varepsilon\geq 1$ and $\zeta>1$, then there exists a positive constant $c$ such that for all $s\in\mathbb{N}$ with $s>1$ and $j\in\mathbb{Z}_{s-1}^+$,
$B_{2m_j+1}\nu^{(1+\mu)(j-1)+(2m_j+1)}/(2m_j+1)!\leq c$, where $\nu:=\zeta/\gamma$.
\end{lemma}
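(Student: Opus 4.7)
The plan is to show the claimed uniform bound by combining three ingredients already on hand: the Bell number bound $B_n\leq(0.792n/\ln(n+1))^n$ quoted earlier in the paper, the Stirling inequality \eqref{Sec3:Stirling-coro}, and the inequality $(1+\mu)(j-1)<2m_j-2$ established in Lemma \ref{lem:Sec5OrderSE1}. Together these will show that the stated quantity is bounded above by a base-to-the-$n$ expression whose base tends to $0$, so that only finitely many values of $j$ could threaten the uniform bound, and those are handled by passing to a maximum.

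First I would rewrite the exponent of $\nu$ in a form that is compatible with $n!$. Using the second inequality of Lemma \ref{lem:Sec5OrderSE1} and $\nu>1$, I obtain
\begin{equation*}
\nu^{(1+\mu)(j-1)+(2m_j+1)}\ <\ \nu^{(2m_j-2)+(2m_j+1)}\ =\ \nu^{2(2m_j+1)-3}\ \leq\ \nu^{2(2m_j+1)}.
\end{equation*}
Next, setting $n:=2m_j+1$ and combining $B_n\leq\bigl(0.792n/\ln(n+1)\bigr)^n$ with $n!\geq\sqrt{2\pi n}(n/\rme)^n$ from \eqref{Sec3:Stirling-coro} gives
\begin{equation*}
\frac{B_n}{n!}\ \leq\ \frac{1}{\sqrt{2\pi n}}\left(\frac{0.792\,\rme}{\ln(n+1)}\right)^{n}.
\end{equation*}

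Multiplying the two estimates, I arrive at the master bound
\begin{equation*}
\frac{B_{2m_j+1}\,\nu^{(1+\mu)(j-1)+(2m_j+1)}}{(2m_j+1)!}\ \leq\ \frac{1}{\sqrt{2\pi(2m_j+1)}}\left(\frac{0.792\,\rme\,\nu^{2}}{\ln(2m_j+2)}\right)^{2m_j+1}.
\end{equation*}
Since $m_j=\lceil j\varepsilon\rceil\geq j\varepsilon\geq j$, the quantity $\ln(2m_j+2)$ grows without bound in $j$. Consequently there exists a threshold $j_0$ (depending only on $\nu$, hence only on $\gamma$ and $\zeta$) such that for every $j\geq j_0$ the base $0.792\,\rme\,\nu^{2}/\ln(2m_j+2)$ is at most $1/2$, so the right-hand side is dominated by $(1/2)^{2m_j+1}$, which is uniformly bounded by $1$.

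For the remaining finitely many indices $j\in\{1,2,\ldots,j_0-1\}$, the expression is a finite explicit number depending only on $\gamma$, $\zeta$, $\varepsilon$ and $\mu$. Taking $c$ to be the maximum of $1$ and these finitely many values yields the desired uniform constant, valid for every $s\in\mathbb{N}$ with $s>1$ and $j\in\mathbb{Z}_{s-1}^+$. I do not anticipate a serious obstacle; the only delicate point is to notice that Lemma \ref{lem:Sec5OrderSE1} is exactly what allows the exponent of $\nu$ to be absorbed into $\nu^{2n}$ so that the growth in $\nu$ is conquered by the super-exponential decay coming from $1/\ln(n+1)^n$.
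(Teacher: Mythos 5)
Your argument is correct and follows essentially the same route as the paper's own proof: both bound the exponent of $\nu$ via the second inequality of Lemma \ref{lem:Sec5OrderSE1}, combine the Bell-number bound with the Stirling inequality \eqref{Sec3:Stirling-coro} at $n=2m_j+1$ to produce the quantity $\bigl(0.792\,\rme\,\nu^2/\ln(2m_j+2)\bigr)^{2m_j+1}$ (up to the harmless factor $\nu^{-3}<1$ that the paper retains and you discard), and then dispose of all but finitely many indices $j$ by noting the base eventually drops below $1$, taking a maximum over the rest. No gap; the proofs are the same in substance.
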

\begin{proof}
The proof is similar to that of Lemma 4.1 in \cite{Xu2}.
Using the second result of Lemma \ref{lem:Sec5OrderSE1} and the fact $\zeta>\gamma$, we obtain that
$\nu^{(1+\mu)(j-1)+(2m_j+1)}< \nu^{4m_j-1}$.
Applying inequality \eqref{Sec3:Stirling-coro} with $n:=2m_j+1$, we find that
$1/(2m_j+1)!\leq 1/\sqrt{2\pi{\rm e}}\left({\rm e}/(2m_j+1)\right)^{2m_j+3/2}$.
Using the bound of Bell number
$B_n\leq\left(0.792n/\ln{(n+1)}\right)^n$
with $n:=2m_j+1$, we have that
$B_{2m_j+1}\leq\left(0.792(2m_j+1)/\ln{(2m_j+2)}\right)^{2m_j+1}$.
Combining these three inequalities yields
$B_{2m_j+1}\nu^{(1+\mu)(j-1)+(2m_j+1)}/(2m_j+1)!\leq d_j^{2m_j+1}/(\sqrt{2\pi(2m_j+1)} \nu^{3})$,
where $d_j:=0.792\nu^2{\rm e}/\ln{(2m_j+2)}$.
It suffices to prove that there exists a positive constant $c$ such that for all $s\in\mathbb{N}$ with $s>1$ and $j\in\mathbb{Z}_{s-1}^+$,
$d_j^{2m_j+1}\leq c$.
According to the definition of $d_j$ and $m_j$, we observe that
$d_j\leq 0.792\nu^2{\rm e}/\ln(2j\varepsilon+2)$.
It follows for $j\in\mathbb{N}$ satisfying
$j\geq(2\varepsilon)^{-1}({\rm e}^{0.792\nu^2 \rm e}-2)$ that $d_j^{2m_j+1}\leq 1$.
On the other hand, for $j\in\mathbb{N}$ satisfying $j<(2\varepsilon)^{-1}({\rm e}^{0.792\nu^2 \rm e}-2)$,
we have that
$d_j^{2m_j+1}\leq \max\set{d_j^{2m_j+1}: j\in\mathbb{N}, j<(2\varepsilon)^{-1}({\rm e}^{0.792\nu^2 \rm e}-2)}$, which is a constant.
This proves the desired inequality.
\end{proof}

We are now ready to estimate  the error $\mathcal{E}_{\gamma}^{s}[f,g]$ and $\mathcal{N}\left(\mathcal{Q}_{\gamma}^{s}[f,g]\right)$ of the quadrature rule $\mathcal{Q}_{\gamma}^{s}[f,g]$.

\begin{theorem}\label{thm:Sec5SE}
Let $\gamma\in (0,1)$ and $\varepsilon\geq1$. If $f$ is of ${\rm Type}(\mu,\infty,\set{0})$ for some $\mu\in(-1,1)$ and $g\in C^{\infty}(I)$ satisfies  Assumption  $\ref{Sec5:Assume}$,
then there exists a positive constant $c$ such that for all  $\kappa>1$ and integers $s>1$
\begin{equation}\label{Sec5:ErrorSE}
\mathcal{E}_{\gamma}^{s}[f,g]\leq c\kappa_{\sigma(r)}^{-(1+\mu)/(r+1)}\gamma^{(1+\mu)(s-1)-1},
\end{equation}
and $\mathcal{N}\left(\mathcal{Q}_{\gamma}^{s}[f,g]\right)\leq \lceil\varepsilon\rceil(s^2-s)$.
In particular, if $g(x)=x$ for $x\in I$, then the upper bound in \eqref{Sec5:ErrorSE} reduces to
$c\kappa^{-1-\mu}\gamma^{(1+\mu)(s-1)-1}$.
\end{theorem}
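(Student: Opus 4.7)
The plan is to adapt the error analysis of Theorem~\ref{thm:Sec4SP} to the geometric grading $x_j=\gamma^{s-j}$ with non-constant Gauss--Legendre degrees $m_j=\lceil j\varepsilon\rceil$, converting the algebraic decay in $s$ into a geometric decay in $\gamma^{s}$. After factoring $\lambda_r$ out of both $\mathcal{Q}_\gamma^{s}[f,g]$ and $\mathcal{I}^\mu[f,g]$, I would split the residual as
\begin{equation*}
\mathcal{E}_\gamma^{s}[f,g]\leq \lambda_r\Bigl(\mathcal{E}_0[\varphi_\kappa]+\sum_{j\in\mathbb{Z}_{s-1}^+}\mathcal{E}_j[\varphi_\kappa]\Bigr),
\end{equation*}
where $\mathcal{E}_0[\varphi_\kappa]:=\bigl|\int_{0}^{x_1}\varphi_\kappa(x)\,\rmd x\bigr|$ accounts for the uncomputed piece on $[0,x_1]$ and $\mathcal{E}_j[\varphi_\kappa]:=\bigl|\mathcal{Q}_{m_j}^{[x_j,x_{j+1}]}[\varphi_\kappa]-\mathcal{I}^{[x_j,x_{j+1}]}[\varphi_\kappa]\bigr|$ are the Gauss--Legendre errors on the remaining subintervals.

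Using $|\varphi_\kappa(x)|\leq|f(\lambda_r x)|$ and Lemma~\ref{lem:Sec4S} with $\varrho=\lambda_r$ and $b=x_1=\gamma^{s-1}$ gives immediately $\mathcal{E}_0[\varphi_\kappa]\leq c\lambda_r^\mu\gamma^{(1+\mu)(s-1)}$. For each $j\in\mathbb{Z}_{s-1}^+$, the Gauss--Legendre remainder yields $\mathcal{E}_j[\varphi_\kappa]\leq h_j^{2m_j+1}|\varphi_\kappa^{(2m_j)}(\xi_j)|/(2^{2m_j}(2m_j+1)!)$ with $h_j=\gamma^{s-j-1}(1-\gamma)$ and some $\xi_j\in[x_j,x_{j+1}]$. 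Applying Lemma~\ref{lem:Sec5SDer} with $n=2m_j$ eliminates the $2^{2m_j}$ denominator, while the monotonicity $\xi_j\geq x_j=\gamma^{s-j}$ together with $\mu-2m_j<0$ bounds $\xi_j^{\mu-2m_j}$ by $\gamma^{(s-j)(\mu-2m_j)}$. A short calculation then collapses the $\gamma$-exponent of $h_j^{2m_j+1}\xi_j^{\mu-2m_j}$ to $(s-j)(1+\mu)-(2m_j+1)$, leaving the factor $(1-\gamma)^{2m_j+1}$.

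The decisive algebraic step is to repackage this estimate so that Lemma~\ref{lem:Sec5OrderSE2} applies uniformly. Splitting $(s-j)(1+\mu)=(s-1)(1+\mu)-(j-1)(1+\mu)$, enlarging $B_{2m_j}$ to $B_{2m_j+1}$, and using $\zeta>1$ to pad $\zeta^{2m_j}$ up to $\zeta^{(1+\mu)(j-1)+(2m_j+1)}$ produces
\begin{equation*}
\mathcal{E}_j[\varphi_\kappa]\leq c\lambda_r^\mu\gamma^{(1+\mu)(s-1)-1}\cdot\dfrac{B_{2m_j+1}\,\nu^{(1+\mu)(j-1)+(2m_j+1)}}{(2m_j+1)!}(1-\gamma)^{2m_j+1},
\end{equation*}
with $\nu=\zeta/\gamma$. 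Lemma~\ref{lem:Sec5OrderSE2} bounds the middle quotient uniformly in $s$ and $j$, while $m_j\geq j$ from Lemma~\ref{lem:Sec5OrderSE1} turns $\sum_{j\in\mathbb{Z}_{s-1}^+}(1-\gamma)^{2m_j+1}$ into a convergent geometric tail depending only on $\gamma$. Summing the $\mathcal{E}_j$'s, adjoining $\mathcal{E}_0[\varphi_\kappa]$, multiplying by $\lambda_r$, and invoking $\lambda_r^{1+\mu}=\kappa_{\sigma(r)}^{-(1+\mu)/(r+1)}$ then produces \eqref{Sec5:ErrorSE}. The main obstacle is engineering precisely this repackaging so that the exponents match the hypothesis of Lemma~\ref{lem:Sec5OrderSE2}; once they do, the competition between $(2m_j+1)!$ and the exponentials is resolved and the outer sum is just geometric.

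For the node count, each subrule $\mathcal{Q}_{m_j}^{[x_j,x_{j+1}]}$ uses $m_j$ interior Gauss--Legendre nodes with no sharing across subintervals, so $\mathcal{N}(\mathcal{Q}_\gamma^{s}[f,g])\leq\sum_{j\in\mathbb{Z}_{s-1}^+}\lceil j\varepsilon\rceil\leq\lceil\varepsilon\rceil\sum_{j\in\mathbb{Z}_{s-1}^+}j\leq\lceil\varepsilon\rceil(s^2-s)$. The linear case $g(x)=x$ reduces by substituting $r=0$, $\sigma(0)=1$, and $\kappa_{\sigma(0)}=\kappa$ into \eqref{Sec5:ErrorSE}, collapsing the prefactor to $c\kappa^{-1-\mu}\gamma^{(1+\mu)(s-1)-1}$.
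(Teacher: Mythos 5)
Your proposal is correct and follows essentially the same route as the paper's proof: the same splitting into $\mathcal{E}_0[\varphi_\kappa]$ and the Gauss--Legendre errors $\mathcal{E}_j[\varphi_\kappa]$, the same use of Lemmas \ref{lem:Sec4S}, \ref{lem:Sec5SDer}, \ref{lem:Sec5OrderSE1} and \ref{lem:Sec5OrderSE2}, and the same repackaging of the exponents into $\nu^{(1+\mu)(j-1)+(2m_j+1)}$. The only cosmetic difference is that you introduce the extra factor $\gamma^{-1}$ in the per-term bound and control the tail via $m_j\geq j$, whereas the paper keeps $\gamma^{(1+\mu)(s-1)}$ throughout and extracts $\gamma^{-1}$ from $1+\sum_{j}(1-\gamma)^{2m_j+1}<\gamma^{-1}$ using the distinctness of the $m_j$; both are valid.
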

\begin{proof}
We prove \eqref{Sec5:ErrorSE} by estimating
$\mathcal{E}_{0}[\varphi_\kappa]$ and $\mathcal{E}_{j}[\varphi_\kappa]:=
\abs{\mathcal{I}^{[x_j,x_{j+1}]}[\varphi_{\kappa}]-\mathcal{Q}_{m_j}^{[x_j,x_{j+1}]}[\varphi_{\kappa}]}$,
for $j\in\mathbb{Z}_{s-1}^{+}$, and then summing them over $j$.

We first consider $\mathcal{E}_{0}[\varphi_\kappa]$.
By applying Lemma \ref{lem:Sec4S} with $\varrho:=\lambda_r$ and $b:=x_1=\gamma^{s-1}$,
there exists a positive constant $c$ such that
for all $\kappa>1$ and $s\in\mathbb{N}$ with $s>1$
\begin{eqnarray*}
\mathcal{E}_{0}[\varphi_\kappa]
\leq\int_{0}^{x_1}\abs{f\left(\lambda_rx\right)}{\rm d}x
\leq c\lambda_r^\mu\gamma^{(1+\mu)(s-1)}.
\end{eqnarray*}
For $j\in\mathbb{Z}_{s-1}^{+}$, by the error bound of the Gauss-Legendre quadrature,
there exists a constant $\xi_j\in[x_j,x_{j+1}]$ such that
$\mathcal{E}_{j}[\varphi_\kappa]\leq h_j^{2m_j+1}/(2^{2m_j}(2m_j+1)!)\abs{\varphi_\kappa^{(2m_j)}(\xi_j)}$,
where $h_j:=x_{j+1}-x_j$. Applying   Lemma \ref{lem:Sec5SDer}  with  $\xi_j\geq x_j$ for $j\in\mathbb{Z}_{s-1}^{+}$, there
exist two positive constants $c$ and $\zeta>1$ such that for all  $\kappa>1$, $s\in\mathbb{N}$ with $s>1$ and $j\in\mathbb{Z}_{s-1}^{+}$,
$\abs{\varphi_\kappa^{(2m_j)}(\xi_j)}\leq c2^{2m_j}B_{2m_j}\zeta^{2m_j}\lambda_r^\mu x_j^{\mu-2m_j}$.
Combining these two inequalities yields two positive constants $c$ and $\zeta>1$ such that for all  $\kappa>1$, $s\in\mathbb{N}$ with $s>1$ and $j\in\mathbb{Z}_{s-1}^{+}$
\begin{eqnarray*}
\mathcal{E}_{j}[\varphi_\kappa]
\leq c\lambda_r^\mu\dfrac{B_{2m_j}\zeta^{2m_j}h_j^{2m_j+1}x_j^{\mu-2m_j}}{(2m_j+1)!}
\leq c\dfrac{B_{2m_j+1}\nu^{(1+\mu)(j-1)+(2m_j+1)}}{(2m_j+1)!}\lambda_r^\mu\gamma^{(1+\mu)(s-1)}(1-\gamma)^{2m_j+1}.
\end{eqnarray*}
From Lemma \ref{lem:Sec5OrderSE2}  we conclude  that there
exists a positive constant $c$ such that for all  $\kappa>1$, $s\in\mathbb{N}$ with $s>1$ and
$j\in\mathbb{Z}_{s-1}^{+}$,
$\mathcal{E}_{j}[\varphi_\kappa]\leq c\lambda_r^\mu\gamma^{(1+\mu)(s-1)}(1-\gamma)^{2m_j+1}$.
Summing up the bound of errors $\mathcal{E}_{j}[\varphi_\kappa]$  over $j\in\mathbb{Z}_{s-1}$, we conclude that
there exists a positive constant $c$ such that for all  $\kappa>1$ and $s\in\mathbb{N}$
with $s>1$,
$\mathcal{E}_{\gamma}^{s}[f,g]\leq c\lambda_r^{1+\mu}\gamma^{(1+\mu)(s-1)}\Big{(}1+\sum_{j\in\mathbb{Z}_{s-1}^{+}}(1-\gamma)^{2m_j+1}\Big{)}$.
According to $m_j\neq m_l$ for $j,l\in\mathbb{Z}_{s-1}^+$ with $j\neq l$ proved in Lemma \ref{lem:Sec5OrderSE1}, we observe that $\set{2m_j+1: j\in\mathbb{Z}_{s-1}^+}\cup\set{0}\subset \mathbb{Z}:=\set{0,1,2,\ldots}$.
It follows that $1+\sum_{j\in\mathbb{Z}_{s-1}^{+}}(1-\gamma)^{2m_j+1}<\sum_{j\in\mathbb{Z}}(1-\gamma)^{j}=\gamma^{-1}$.
Substituting this result and the definition \eqref{Sec4:kappa} of $\lambda_r$  into the inequality above
yields the estimate \eqref{Sec5:ErrorSE}.

The bound of the number of functional evaluations used in quadrature \eqref{alm:Sec5SE} may be obtained directly.

When $g(x)=x$, we substitute $r=0$ and  $\kappa_{\sigma(0)}=\kappa$ into estimate \eqref{Sec5:ErrorSE} to yield the special result.
\end{proof}

We next develop a quadrature rule for the oscillatory integrals \eqref{Sec4:SOI}.
This is done by choosing variable numbers of quadrature nodes in the subintervals of $\Lambda$.
Specifically, for  $n\in\mathbb{N}$ and for the partition $\Pi_{\kappa,\sigma(r)}$ of $\Lambda$
with nodes defined  by \eqref{Sec4:PartitionOI}, we let
$m_j:=n+\lceil(n+1-j)(1-\alpha)\rceil$ and $N_j:=\left\lceil q_j\right\rceil$ for $j\in\mathbb{Z}_{n}^+$.
For each $j\in\mathbb{Z}^{+}_{n}$, we use $\mathcal{Q}_{N_j,\kappa,m_j}^{[x_{j-1},x_{j}]}[f,g]$ to approximate the integral $\mathcal{I}_\kappa^{[x_{j-1},x_{j}]}[f,g]$.
Integral $\mathcal{I}_\kappa^\Lambda[f,g]$ defined by  \eqref{Sec4:SOI}
is then approximated by the quadrature formula
\begin{equation}\label{alm:Sec5E}
\mathcal{Q}_{\kappa,n}^{\Lambda}[f,g]:=\sum_{j\in\mathbb{Z}_n^+}\mathcal{Q}_{N_j,\kappa,m_j}^{[x_{j-1},x_{j}]}[f,g].
\end{equation}
The next lemma provide an estimate of the errors
$\mathcal{E}_j[f,g]:=\abs{\mathcal{I}_{\kappa}^{[x_{j-1},x_{j}]}[f,g]
-\mathcal{Q}_{N_j,\kappa,m_j}^{[x_{j-1},x_{j}]}[f,g]}$
for $j\in\mathbb{Z}_{n}^+$.

\begin{lemma}\label{lem:Sec5SubE}
If $\Theta$ is of ${\rm Type}(\alpha,\infty,\set{0})$ for some  $\alpha\in (-1,1)$ and $g$ satisfies Assumption $\ref{Sec4:Assume}$, then there exists a positive  constant $c$ such that for all $\kappa>1$, $n\in\mathbb{N}$ and $j\in\mathbb{Z}_n^+$
\begin{eqnarray*}
\mathcal{E}_j[f,g]\leq c
\dfrac{\abs{g(1)}^{-\alpha}(\delta(r))^{\alpha-1}}{(m_j-1)!\kappa^2N_j^{m_j-1}}q_j^{m_j}x_{j-1}^{(\alpha-1)(r+1)}\left(\beta_n(r)\right)^{m_j}.
\end{eqnarray*}
\end{lemma}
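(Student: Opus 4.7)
The plan is to mirror the proof of Lemma \ref{lem:Sec4SubE} almost verbatim, simply replacing the fixed degree $m$ by the subinterval-dependent degree $m_j$. The hypothesis that $\Theta$ is of ${\rm Type}(\alpha,\infty,\{0\})$, which is strictly stronger than the Type$(\alpha,m+1,\{0\})$ assumption used in Lemma \ref{lem:Sec4SubE}, makes the derivative estimates available at every order, so no new ingredient is needed.

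First I would apply estimate \eqref{Sec2:FilonM1} of Theorem \ref{thm:Sec2FilonM} on the subinterval $[x_{j-1},x_j]$ with $m$ replaced by $m_j$ to obtain
\begin{equation*}
\mathcal{E}_j[f,g]\leq \dfrac{3(m_j+1)}{m_j!\kappa^2 N_j^{m_j-1}}\,\norm{\Psi^{(m_j+1)}}_{\infty} M_j^{m_j} h_j^{m_j},
\end{equation*}
where $h_j:=x_j-x_{j-1}$. Next, to bound $\norm{\Psi^{(m_j+1)}}_\infty$, I would use the identity $\Theta(y)=\Psi(g(1)y)$ to get $\Psi^{(m_j+1)}(g(1)y)=(g(1))^{-(m_j+1)}\Theta^{(m_j+1)}(y)$, and then invoke the Type$(\alpha,\infty,\{0\})$ hypothesis on $\Theta$ to deduce, for $x\in[g(x_{j-1}),g(x_j)]$,
\begin{equation*}
|\Psi^{(m_j+1)}(x)|\leq c\,|g(1)|^{-\alpha}\,x^{\alpha-m_j-1}.
\end{equation*}
Since $\alpha-m_j-1<0$, this is maximized at the left endpoint, giving $\norm{\Psi^{(m_j+1)}}_\infty\leq c|g(1)|^{-\alpha}(g(x_{j-1}))^{\alpha-m_j-1}$.

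Then I would combine $M_j^{m_j}$ with $(g(x_{j-1}))^{\alpha-m_j-1}$ using the definition of $q_j=M_j x_{j-1}/g(x_{j-1})$ to obtain
\begin{equation*}
M_j^{m_j}(g(x_{j-1}))^{\alpha-m_j-1}=q_j^{m_j}(g(x_{j-1}))^{\alpha-1}x_{j-1}^{-m_j},
\end{equation*}
and use Assumption \ref{Sec4:Assume} (specifically $g(x_{j-1})\geq\delta(r)x_{j-1}^{r+1}$ together with $\alpha-1<0$) to bound $(g(x_{j-1}))^{\alpha-1}\leq(\delta(r))^{\alpha-1}x_{j-1}^{(r+1)(\alpha-1)}$. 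The remaining factor $x_{j-1}^{-m_j}h_j^{m_j}=(h_j/x_{j-1})^{m_j}$ can then be computed explicitly from the partition \eqref{Sec4:PartitionOI}: since $x_j=\lambda_r^{(n-j)/n}$, one has $x_j/x_{j-1}=\lambda_r^{-1/n}$ and therefore $h_j/x_{j-1}=\lambda_r^{-1/n}-1=\beta_n(r)$.

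Putting the pieces together and absorbing the harmless prefactor $(m_j+1)/m_j\leq 2$ into the constant converts the $1/m_j!$ into $1/(m_j-1)!$ and yields exactly
\begin{equation*}
\mathcal{E}_j[f,g]\leq c\,\dfrac{|g(1)|^{-\alpha}(\delta(r))^{\alpha-1}}{(m_j-1)!\,\kappa^2 N_j^{m_j-1}}\,q_j^{m_j} x_{j-1}^{(\alpha-1)(r+1)}(\beta_n(r))^{m_j}.
\end{equation*}
There is really no serious obstacle here, since every manipulation was already carried out in the proof of Lemma \ref{lem:Sec4SubE}; the only thing to be careful about is that $m_j$ now varies with $j$, so one must verify that the constant $c$ from the Type$(\alpha,\infty,\{0\})$ hypothesis can be taken independent of the differentiation order, which is precisely what the Type$(\alpha,\infty,\{0\})$ assumption on $\Theta$ supplies.
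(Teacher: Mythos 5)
Your proof is correct and takes essentially the same route as the paper, which disposes of this lemma by remarking that it ``may be proved in the same way as Lemma \ref{lem:Sec4SubE} with $m=m_j$''; your write-up is exactly that adaptation, step for step. You also correctly identify the one point requiring care—that the constant in the ${\rm Type}(\alpha,\infty,\set{0})$ bound must be uniform over the differentiation order since $m_j$ grows with $n$—which is precisely how the paper uses that hypothesis elsewhere (e.g.\ in Lemma \ref{lem:Sec5SDer}).
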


This lemma may be proved in the same way as Lemma \ref{lem:Sec4SubE} with $m=m_j$.
We next estimate the error
$\mathcal{E}_{\kappa,n}^{\Lambda}[f,g]:=\abs{\mathcal{I}_\kappa^{\Lambda}[f,g]
-\mathcal{Q}_{\kappa,n}^{\Lambda}[f,g]}$.
To this end, we  establish two technical lemmas.

\begin{lemma}\label{lem:Sec5OrderP}
For all $n\in\mathbb{N}$ with $n>1$,
$\sum_{j\in\mathbb{Z}_{n}^+}1/(m_j-1)\leq 1$.
\end{lemma}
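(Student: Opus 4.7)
The plan is to derive a uniform lower bound for $m_j-1$ and then sum the reciprocals trivially.

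First, I recall the definition $m_j := n + \lceil(n+1-j)(1-\alpha)\rceil$ for $j\in\mathbb{Z}_n^+$, with $\alpha\in(-1,1)$. Since $1-\alpha>0$ and $n+1-j\geq 1$ for each $j\in\{1,2,\ldots,n\}$, the real number $(n+1-j)(1-\alpha)$ is strictly positive, so its ceiling satisfies $\lceil(n+1-j)(1-\alpha)\rceil\geq 1$. Consequently $m_j\geq n+1$, i.e.\ $m_j-1\geq n$ for every $j\in\mathbb{Z}_n^+$.

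Taking reciprocals yields $1/(m_j-1)\leq 1/n$ uniformly in $j$, and summing over the $n$ indices in $\mathbb{Z}_n^+$ gives
\begin{equation*}
\sum_{j\in\mathbb{Z}_n^+}\frac{1}{m_j-1}\leq n\cdot\frac{1}{n}=1,
\end{equation*}
which is the desired bound.

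There is really no obstacle here: the only thing to verify is the positivity of $(n+1-j)(1-\alpha)$, which follows directly from the hypothesis $\alpha<1$ together with the range of $j$. The bound $m_j-1\geq n$ is in fact sharp when $j=n$ and $\alpha\in[0,1)$, so no refinement beyond the naive pigeonhole estimate is needed.
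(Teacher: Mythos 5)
Your proof is correct and follows essentially the same route as the paper's: both establish the uniform bound $m_j-1\geq n$ from $\alpha<1$ and then sum the $n$ reciprocals, each at most $1/n$. You simply spell out more explicitly why the ceiling term is at least $1$, which the paper leaves implicit.
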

\begin{proof}
The proof is similar to that of Lemma \ref{lem:Sec3OrderP}.
For $n\in\mathbb{N}$ with $n>1$,
we need to estimate the lower bound of the set
$\set{m_j-1: j\in\mathbb{Z}_{n}^+}$.
By $\alpha<1$,  we have that $ m_j-1\geq n$ for $j\in\mathbb{Z}_{n}^+$.
It follows that
$\sum_{j\in\mathbb{Z}_{n}^+}1/(m_j-1)\leq\sum_{j\in\mathbb{Z}_{n}^+}1/n= 1$.
\end{proof}

For simple notation, we let $\tau_n(r):=\kappa_{\sigma(r)}^{1/n}\left(\kappa_{\sigma(r)}^{1/n}-1\right)$.

\begin{lemma}\label{lem:Sec5Pre}
There exists a positive constant $c$  such that for all $\kappa>1$, $j\in\mathbb{Z}_n^+$ and $n\in\mathbb{N}$
satisfying
\begin{equation}\label{Sec5:C}
\tau_n(r)\leq (n-1)/{\rm e},
\end{equation}
there holds $\left(\tau_n(r)\right)^{m_j-n}/(m_j-2)!\leq c(n-1)^{-{1}/{2}}$. Moreover, $\kappa_{\sigma(r)}^{1/n}\leq 2(n-1)^{1/2}$.
\end{lemma}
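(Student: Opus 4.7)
The plan is to imitate the proof of Lemma \ref{lem:Sec3Pre}: invoke Stirling's formula for $(m_j-2)!$, then exploit the hypothesis $\tau_n(r)\leq (n-1)/\mathrm{e}$ after rewriting the exponent $m_j-n$ in a convenient way. For brevity write $\tau:=\tau_n(r)$ and $y:=\kappa_{\sigma(r)}^{1/n}$, so that $\tau=y(y-1)=y^2-y$.

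First I would note two elementary facts from the definition of $m_j$. Since $\alpha\in(-1,1)$ and $j\in\mathbb{Z}_n^+$, the ceiling in $m_j=n+\lceil(n+1-j)(1-\alpha)\rceil$ is at least $1$, giving $m_j-n\geq 1$ and hence $m_j-2\geq n-1$. In particular $n\geq 2$ is forced by the hypothesis $\tau\leq (n-1)/\mathrm{e}$ together with $\kappa>1$ (which makes $\tau>0$).

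Next I would apply the Stirling bound \eqref{Sec3:Stirling-coro} with $n:=m_j-2$ to get
\begin{equation*}
\frac{\tau^{m_j-n}}{(m_j-2)!}\leq\frac{1}{\sqrt{2\pi(m_j-2)}}\,\tau^{m_j-n}\Big(\frac{\mathrm{e}}{m_j-2}\Big)^{m_j-2}.
\end{equation*}
The key algebraic step is the splitting
\begin{equation*}
\tau^{m_j-n}\Big(\frac{\mathrm{e}}{m_j-2}\Big)^{m_j-2}=\Big(\frac{\mathrm{e}\tau}{m_j-2}\Big)^{m_j-n}\Big(\frac{\mathrm{e}}{m_j-2}\Big)^{n-2}.
\end{equation*}
Because $\mathrm{e}\tau\leq n-1\leq m_j-2$ by hypothesis and by step one, the first factor is at most $1$ (recalling $m_j-n\geq 1$). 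The second factor satisfies $(\mathrm{e}/(m_j-2))^{n-2}\leq(\mathrm{e}/(n-1))^{n-2}$, and the sequence $\{(\mathrm{e}/(n-1))^{n-2}\}_{n\geq 2}$ is bounded by a universal constant (it equals $1$ at $n=2$ and tends to $0$). Combining with $(m_j-2)^{-1/2}\leq(n-1)^{-1/2}$ yields the asserted inequality $\tau^{m_j-n}/(m_j-2)!\leq c(n-1)^{-1/2}$.

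For the final inequality $y\leq 2(n-1)^{1/2}$, I would argue by contradiction using the identity $\tau=y^2-y$. If $y\geq 2(n-1)^{1/2}$ with $n\geq 2$, then $y^2-y\geq 4(n-1)-2\sqrt{n-1}=\sqrt{n-1}\bigl(4\sqrt{n-1}-2\bigr)\geq (n-1)(4-2/\sqrt{n-1})$; since $n\geq 2$ and $4-2/\sqrt{n-1}>1/\mathrm{e}$ for all integers $n\geq 2$ (a trivial numerical check), this would force $\tau>(n-1)/\mathrm{e}$, contradicting the hypothesis. The only real obstacle is to keep the exponent bookkeeping straight in the splitting step; once that is done, both conclusions follow by direct computation.
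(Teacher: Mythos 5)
Your proof is correct, and it follows the same two pillars as the paper's: the Stirling bound \eqref{Sec3:Stirling-coro} applied to $(m_j-2)!$, the observations $m_j-n\geq 1$, $m_j-2\geq n-1$, and the hypothesis in the form $\mathrm{e}\tau_n(r)\leq n-1$. The organization differs in a small but genuine way. The paper splits into the cases $\tau_n(r)\leq 1$ (where the first claim is immediate from $(m_j-2)!\geq (n-1)!\geq\sqrt{n-1}$, with no Stirling needed) and $\tau_n(r)>1$ (where it enlarges the exponent, $\tau^{m_j-n}\leq\tau^{m_j-2}$, and groups everything into $(\mathrm{e}\tau_n(r)/(n-1))^{m_j-2}\leq 1$). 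You instead avoid the case split by the single factorization $\tau^{m_j-n}(\mathrm{e}/(m_j-2))^{m_j-2}=(\mathrm{e}\tau/(m_j-2))^{m_j-n}(\mathrm{e}/(m_j-2))^{n-2}$, which is exact since the exponents sum to $m_j-2$; the price is that you must separately bound the auxiliary sequence $\set{(\mathrm{e}/(n-1))^{n-2}}_{n\geq 2}$ by a universal constant (it is bounded, peaking at $\mathrm{e}/2$ for $n=3$ rather than being monotone from $1$ as your parenthetical suggests, but this does not affect the argument). For the second claim the paper argues directly from $\left(\kappa_{\sigma(r)}^{1/n}-1\right)^2\leq\tau_n(r)\leq (n-1)/\mathrm{e}$, giving $\kappa_{\sigma(r)}^{1/n}\leq\left((n-1)/\mathrm{e}\right)^{1/2}+1\leq 2(n-1)^{1/2}$, whereas you run the equivalent computation as a proof by contradiction via $\tau=y^2-y$; both are fine, the direct version being marginally shorter.
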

\begin{proof}
By $\alpha<1$  we see that $m_j-2\geq n-1$ for $j\in\mathbb{Z}_{n}^+$.
Condition \eqref{Sec5:C} implies
$n\geq 2$ since $ \tau_1(r)>0$ for  $\kappa>1$.
It follows  for  $n\in\mathbb{N}$ satisfying \eqref{Sec5:C} with $\tau_n(r)\leq1$ that
$\left(\tau_n(r)\right)^{m_j-n}/(m_j-2)!\leq 1/(m_j-2)!\leq (n-1)^{-{1}/{2}}$.
On the other hand,  for  $n\in\mathbb{N}$ satisfying \eqref{Sec5:C} with $\tau_n(r)>1$, we have
$\left(\tau_n(r)\right)^{m_j-n}/(m_j-2)!\leq \left(\tau_n(r)\right)^{m_j-2}/(m_j-2)!$.
According to inequality \eqref{Sec3:Stirling-coro} with $n:=m_j-2$, there exists a positive constant $c$ such that for all $n\in\mathbb{N}$ with $n>1$ and $j\in\mathbb{Z}_n^+$,
$1/(m_j-2)!\leq c (m_j-2)^{-{1}/{2}}\left({\rm e}/(m_j-2)\right)^{m_j-2}
 \leq c (n-1)^{-{1}/{2}}\left({\rm e}/(n-1)\right)^{m_j-2}$.
This together with the  inequality above ensures that
there exists a positive constant $c$ such that for all $\kappa>1$,  $j\in\mathbb{Z}_n^+$ and $n\in\mathbb{N}$ satisfying  \eqref{Sec5:C} with $\tau_n(r)>1$,
$\left(\tau_n(r)\right)^{m_j-n}/(m_j-2)!
\leq c (n-1)^{-{1}/{2}}\left({\rm e}\tau_n(r)/(n-1)\right)^{m_j-2}
\leq c (n-1)^{-{1}/{2}}$. This concludes the desired result.

We next prove the second inequality.
From condition \eqref{Sec5:C} we observe  that
$\left(\kappa_{\sigma(r)}^{1/n}-1\right)^2\leq\tau_n(r)\leq (n-1)/{\rm e}$.
Using the inequality
$\left((n-1)/{\rm e}\right)^{1/2}+1\leq 2(n-1)^{1/2}$ for $n>1$ yields that for $n\in\mathbb{N}$ satisfying \eqref{Sec5:C} with $\kappa_{\sigma(r)}^{1/n}>2$,
$\kappa_{\sigma(r)}^{{1}/{n}}\leq \left((n-1)/{\rm e}\right)^{1/2}+1\leq 2(n-1)^{1/2}$. Moreover, it follows for $n\in\mathbb{N}$ satisfying \eqref{Sec5:C} with $\kappa_{\sigma(r)}^{1/n}\leq2$,
$\kappa_{\sigma(r)}^{{1}/{n}}\leq 2(n-1)^{1/2}$.
Combining these two results we obtain the second conclusion.
\end{proof}

We are now ready to estimate  the error $\mathcal{E}_{\kappa,n}^{\Lambda}[f,g]$ and $\mathcal{N}\left(\mathcal{Q}_{\kappa,n}^{\Lambda}[f,g]\right)$.
Let $\rho_n(r):=1-\lambda_r^{1/n}$, where $\lambda_r$ is defined by \eqref{Sec4:kappa}.
We first consider the case $r=0$. For simple notation, let $\tau_n^0:=\tau_n(0)$ and $\rho_n^0:=\rho_n(0)$.

\begin{theorem}\label{thm:Sec5E0}
If $\Theta$ is of ${\rm Type}(\alpha,\infty,\set{0})$ for some  $\alpha\in (-1,1)$ and $g$ satisfies Assumption $\ref{Sec4:Assume}$ with $r=0$, then there exists a positive constant $c$  such that for all $\kappa>1$ and $n\in\mathbb{N}$ satisfying \eqref{Sec5:C},
\begin{equation}\label{Sec5:ErrorE0}
\mathcal{E}_{\kappa,n}^{\Lambda}[f,g]\leq c \abs{g(1)}^{-\alpha} \sigma_0\delta_0^{\alpha-2}(n-1)^{-1/2}\kappa^{-2}\kappa_{\sigma_0}\left(\rho_n^0\right)^n.
\end{equation}
For $n\in\mathbb{N}$, there holds the estimate
$\mathcal{N}\left(\mathcal{Q}_{\kappa,n}^{\Lambda}[f,g]\right)
\leq \left\lceil\sigma_0/\delta_0\right\rceil\big{(}2n^2+\lceil1-\alpha\rceil(n^2+n)\big{)}/2+1$.
\end{theorem}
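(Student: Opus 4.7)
My plan is to follow the template of the analogous polynomial-order Theorem \ref{thm:Sec4P0}, with the complication that the interpolation degrees $m_j$ now vary with $j$. First I would apply Lemma \ref{lem:Sec5SubE} with $r=0$ on each subinterval $[x_{j-1}, x_j]$. The choice $N_j=\lceil q_j\rceil$ combined with Lemma \ref{lem:Sec4Bound} (which gives $q_j\leq \sigma_0/\delta_0$ when $r=0$) yields $q_j^{m_j}/N_j^{m_j-1}\leq q_j\leq \sigma_0/\delta_0$, so the local error reduces to
\[
\mathcal{E}_j[f,g]\leq \frac{c|g(1)|^{-\alpha}\delta_0^{\alpha-2}\sigma_0}{(m_j-1)!\kappa^2}\, x_{j-1}^{\alpha-1}(\beta_n^0)^{m_j}.
\]

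The crux of the argument is to bound the product $x_{j-1}^{\alpha-1}(\beta_n^0)^{m_j}$ by $\kappa_{\sigma_0}(\rho_n^0)^n(\tau_n^0)^{m_j-n}$. I would split $(\beta_n^0)^{m_j}=(\beta_n^0)^n(\beta_n^0)^{m_j-n}$ and use $\beta_n^0=\kappa_{\sigma_0}^{1/n}\rho_n^0$, so that $(\beta_n^0)^n=\kappa_{\sigma_0}(\rho_n^0)^n$, which is the source of the $\kappa_{\sigma_0}(\rho_n^0)^n$ factor in the final bound. For the remainder, since $x_{j-1}^{\alpha-1}=(\kappa_{\sigma_0}^{1/n})^{(n+1-j)(1-\alpha)}$ and $m_j-n=\lceil (n+1-j)(1-\alpha)\rceil\geq (n+1-j)(1-\alpha)$, the inequality $\kappa_{\sigma_0}^{1/n}\geq 1$ allows me to bound $x_{j-1}^{\alpha-1}\leq (\kappa_{\sigma_0}^{1/n})^{m_j-n}$, producing $[\kappa_{\sigma_0}^{1/n}(\kappa_{\sigma_0}^{1/n}-1)]^{m_j-n}=(\tau_n^0)^{m_j-n}$ for $x_{j-1}^{\alpha-1}(\beta_n^0)^{m_j-n}$. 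Applying Lemma \ref{lem:Sec5Pre} then gives $(\tau_n^0)^{m_j-n}/(m_j-2)!\leq c(n-1)^{-1/2}$, and using $(m_j-1)!\geq (m_j-1)(m_j-2)!$ yields
\[
\mathcal{E}_j[f,g]\leq \frac{c|g(1)|^{-\alpha}\delta_0^{\alpha-2}\sigma_0\kappa_{\sigma_0}(\rho_n^0)^n}{(n-1)^{1/2}\kappa^2(m_j-1)}.
\]
Summing over $j\in\mathbb{Z}_n^+$ and applying Lemma \ref{lem:Sec5OrderP} (which gives $\sum_j 1/(m_j-1)\leq 1$) produces \eqref{Sec5:ErrorE0}.

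For the count of functional evaluations, Theorem \ref{thm:Sec2FilonM} gives $\mathcal{N}(\mathcal{Q}_{N_j,\kappa,m_j}^{[x_{j-1},x_j]}[f,g])\leq N_j m_j+1$, so subtracting $n-1$ shared endpoints produces $\mathcal{N}(\mathcal{Q}_{\kappa,n}^{\Lambda}[f,g])\leq \sum_j N_j m_j+1\leq \lceil \sigma_0/\delta_0\rceil\sum_j m_j+1$. Writing $\sum_j m_j=n^2+\sum_{k=1}^{n}\lceil k(1-\alpha)\rceil$ and bounding $\lceil k(1-\alpha)\rceil\leq k\lceil 1-\alpha\rceil$ gives $\sum_j m_j\leq n^2+\lceil 1-\alpha\rceil n(n+1)/2$, yielding the stated count bound. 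The main obstacle is the absorption step in the middle paragraph: the $\kappa_{\sigma_0}$-growth arising from the endpoint value $x_{j-1}^{\alpha-1}$ is matched exactly by the excess degree $m_j-n$ built into the definition of $m_j$, and this balance — verified through the combined use of $\kappa_{\sigma_0}^{1/n}\geq 1$ and the ceiling inequality — is what makes the variable-degree choice yield an exponential convergence rate.
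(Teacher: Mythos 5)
Your proposal is correct and follows essentially the same route as the paper's proof: Lemma \ref{lem:Sec5SubE} with $r=0$ and $N_j=\lceil q_j\rceil$ for the local error, the identity $\beta_n^0=\kappa_{\sigma_0}^{1/n}\rho_n^0$ together with the ceiling inequality $m_j-n\geq(n+1-j)(1-\alpha)$ to absorb $x_{j-1}^{\alpha-1}$ into $(\tau_n^0)^{m_j-n}$, then Lemmas \ref{lem:Sec5Pre} and \ref{lem:Sec5OrderP} to finish. The only difference is that you spell out the absorption step, which the paper merely asserts; the evaluation count is handled identically.
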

\begin{proof}
We prove \eqref{Sec5:ErrorE0} by estimating $\mathcal{E}_{j}[f,g]$ and then summing them over $j\in\mathbb{Z}_{n}^+$. By employing Lemmas \ref{lem:Sec4Bound} and \ref{lem:Sec5SubE}  with $r=0$, and the choice of $N_j$, there exists a positive constant $c$ such that for all $\kappa>1$, $j\in\mathbb{Z}_{n}^+$ and
$n\in\mathbb{N}$,
$\mathcal{E}_j[f,g]\leq c
\dfrac{\abs{g(1)}^{-\alpha}\sigma_0\delta_0^{\alpha-2}}{(m_j-1)!\kappa^2}{x_{j-1}^{\alpha-1}}\left(\beta_n^0\right)^{m_j}$.
Applying $\beta_n^0=\lambda_0^{-{1}/{n}}\rho_n^0$ with
the definition of  $x_{j-1}$, we observe that
$x_{j-1}^{\alpha-1}\left(\beta_n^0\right)^{m_j}
\leq\left(\tau_n^0\right)^{m_j-n}\lambda_0^{-1}\left(\rho_n^0\right)^n$.
Combining these two inequalities yields a positive constant $c$  such that for all $\kappa>1$, $j\in\mathbb{Z}_n^+$ and $n\in\mathbb{N}$,
$\mathcal{E}_j[f,g]\leq c \dfrac{\abs{g(1)}^{-\alpha}\sigma_0\delta_0^{\alpha-2}}{(m_j-1)\kappa^2}
 \dfrac{\left(\tau_n^0\right)^{m_j-n}}{(m_j-2)!}\lambda_0^{-1}\left(\rho_n^0\right)^n$.
This together with the first result of Lemma \ref{lem:Sec5Pre} with $r=0$ ensures that there
exists a positive constant $c$  such that for all $\kappa>1$, $j\in\mathbb{Z}_n^+$ and
$n\in\mathbb{N}$ that satisfy \eqref{Sec5:C},
$\mathcal{E}_j[f,g]\leq c
(m_j-1)^{-1}\abs{g(1)}^{-\alpha}\sigma_0\delta_0^{\alpha-2}(n-1)^{-1/2}\kappa^{-2}\lambda_0^{-1}\left(\rho_n^0\right)^n$.
Summing up the inequality above over $j\in\mathbb{Z}_n^+$, we obtain that there exists a positive constant
$c$  such that for all $\kappa>1$ and $n\in\mathbb{N}$ satisfying \eqref{Sec5:C},
\begin{eqnarray*}
\mathcal{E}_{\kappa,n}^{\Lambda}[f,g]\leq c \abs{g(1)}^{-\alpha} \sigma_0\delta_0^{\alpha-2}(n-1)^{-1/2}\kappa^{-2}\lambda_0^{-1}\left(\rho_n^0\right)^n\sum_{j\in\mathbb{Z}_{n}^+}(m_j-1)^{-1}.
\end{eqnarray*}
This together with Lemma \ref{lem:Sec5OrderP} and $\lambda_0^{-1}=\kappa_{\sigma_0}$ leads to the estimate \eqref{Sec5:ErrorE0}.

According to formula \eqref{alm:Sec5E} and Lemma \ref{lem:Sec4Bound} with $r=0$, we  obtain that for $n\in\mathbb{N}$,
$\mathcal{N}\left(\mathcal{Q}_{\kappa,n}^{\Lambda}[f,g]\right)
 \leq\sum_{j\in\mathbb{Z}_n^+}\Big{\{}\left\lceil\sigma_0/\delta_0\right\rceil m_j+1\Big{\}}-(n-1)
 \leq\left\lceil\sigma_0/\delta_0\right\rceil\big{(}2n^2+\lceil1-\alpha\rceil(n^2+n)\big{)}/2+1$.
\end{proof}

Following Theorem \ref{thm:Sec5E0}, we have a special result for the case $g(x)=x$ for $x\in I$.

\begin{corollary}\label{cor:Sec5LinearE}
If $f$ is of ${\rm Type}(\mu,\infty,\set{0})$ for some $\mu\in(-1,1)$, then there exists a positive constant $c$  such that for all $\kappa>1$ and $n\in\mathbb{N}$ that satisfy \eqref{Sec5:C} with $r=0$,
$\mathcal{E}_{\kappa,n}^{\Lambda}[f,g]\leq c {(n-1)^{-{1}/{2}}\kappa^{-1}}\epsilon_n^{n}$,
where $\epsilon_n:=1-\kappa^{-1/n}$. For $n\in\mathbb{N}$, there holds the estimate
$\mathcal{N}\left(\mathcal{Q}_{\kappa,n}^{\Lambda}[f,g]\right)
 \leq \big{(}2n^2+\lceil1-\mu\rceil(n^2+n)\big{)}/2+1$.
\end{corollary}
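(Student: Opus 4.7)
The plan is to derive the corollary as a direct specialization of Theorem~\ref{thm:Sec5E0} to the linear oscillator $g(x)=x$, so essentially no new calculation is required beyond bookkeeping. First I would verify that the hypotheses of Theorem~\ref{thm:Sec5E0} are satisfied in this setting: with $g(x)=x$ we have $r=0$ (no stationary point), $g'\equiv 1$, and hence $\sigma_0=\sigma(0)=1$, $\delta_0=\delta(0)=1$, while Assumption~\ref{Sec4:Assume} with $r=0$ holds trivially. Because $g^{-1}$ is the identity and $g(1)=1$, the auxiliary function is $\Theta(y)=\Psi(g(1)y)=((f/g')\circ g^{-1})(y)=f(y)$, so the hypothesis ``$\Theta$ of ${\rm Type}(\alpha,\infty,\set{0})$'' in Theorem~\ref{thm:Sec5E0} translates into ``$f$ of ${\rm Type}(\mu,\infty,\set{0})$'' under the identification $\alpha=\mu$.

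Next I would substitute into the error estimate \eqref{Sec5:ErrorE0}. Since $\sigma_0=1\leq 1$, the definition of $\kappa_{\sigma(r)}$ gives $\kappa_{\sigma_0}=\kappa$, so $\lambda_0=\kappa_{\sigma_0}^{-1}=\kappa^{-1}$ and therefore
\[
\rho_n^0 = 1-\lambda_0^{1/n}=1-\kappa^{-1/n}=\epsilon_n.
\]
I would also observe that condition \eqref{Sec5:C} with $r=0$ is exactly the assumption under which we wish to prove the corollary. Feeding $|g(1)|=\sigma_0=\delta_0=1$, $\kappa_{\sigma_0}=\kappa$ and $\rho_n^0=\epsilon_n$ into \eqref{Sec5:ErrorE0} collapses the factor $|g(1)|^{-\alpha}\sigma_0\delta_0^{\alpha-2}\kappa_{\sigma_0}$ down to $\kappa$, leaving $c(n-1)^{-1/2}\kappa^{-1}\epsilon_n^n$, which is precisely the claimed error bound.

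For the operation count I would carry out the same substitutions in the final estimate of Theorem~\ref{thm:Sec5E0}: $\lceil \sigma_0/\delta_0\rceil=\lceil 1\rceil=1$ and $\alpha$ is replaced by $\mu$, producing the stated bound $(2n^2+\lceil 1-\mu\rceil(n^2+n))/2+1$. I do not expect any real obstacle: the only substantive check is that $\Theta=f$ inherits Type$(\mu,\infty,\set{0})$ directly from the hypothesis on $f$ (trivial since $\Theta$ literally equals $f$), after which the rest is arithmetic on the constants appearing in Theorem~\ref{thm:Sec5E0}.
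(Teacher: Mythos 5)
Your proposal is correct and follows exactly the route the paper intends: the corollary is stated as a direct consequence of Theorem \ref{thm:Sec5E0}, obtained by substituting $r=0$, $g(1)=\sigma_0=\delta_0=1$, $\kappa_{\sigma_0}=\kappa$, $\rho_n^0=\epsilon_n$ and $\Theta=f$ (so $\alpha=\mu$) into \eqref{Sec5:ErrorE0} and into the operation-count bound. Nothing is missing.
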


Next, we consider the case $r>0$.

\begin{theorem}\label{thm:Sec5E}
If $\Theta$ is of ${\rm Type}(\alpha,\infty,\set{0})$ for some  $\alpha\in (-1,1)$ and $g$ satisfies Assumption $\ref{Sec4:Assume}$ with $r>0$, then there exists a positive constant $c$  such that for all $\kappa>1$ and $n\in\mathbb{N}$ satisfying \eqref{Sec5:C},
\begin{equation}\label{Sec5:ErrorE}
\mathcal{E}_{\kappa,n}^{\Lambda}[f,g]\leq c (r+1)\abs{g(1)}^{-\alpha} \sigma(r)(\delta(r))^{\alpha-2}\kappa^{-2}\kappa_{\sigma(r)}^{{1}/{(r+1)}}(\rho_n(r))^n.
\end{equation}
For $n\in\mathbb{N}$, there holds
$\mathcal{N}\left(\mathcal{Q}_{\kappa,n}^{\Lambda}[f,g]\right)\leq \left\lceil (r+1)\kappa_{\sigma(r)}^{r/(n(r+1))}\sigma(r)/\delta(r)\right\rceil\big{(}2n^2+\lceil1-\alpha\rceil(n^2+n)\big{)}/2+1$.
\end{theorem}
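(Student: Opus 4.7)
The plan is to mirror the structure of the proof of Theorem~\ref{thm:Sec5E0}, with modifications reflecting that for $r>0$ Lemma~\ref{lem:Sec4Bound} gives a $\kappa$-dependent upper bound on $q_j$. I will first estimate $\mathcal{E}_j[f,g]$ for each $j\in\mathbb{Z}_n^+$ and then sum over $j$, invoking Lemmas~\ref{lem:Sec5OrderP} and \ref{lem:Sec5Pre} at the end.

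Starting from Lemma~\ref{lem:Sec5SubE}, the choice $N_j=\lceil q_j\rceil$ gives $q_j^{m_j}/N_j^{m_j-1}\leq q_j$, and Lemma~\ref{lem:Sec4Bound} with $r>0$ bounds $q_j$ by $(r+1)\kappa_{\sigma(r)}^{r/(n(r+1))}\sigma(r)/\delta(r)$, introducing the prefactor $(r+1)\sigma(r)(\delta(r))^{\alpha-2}$ together with an extra $\kappa_{\sigma(r)}^{r/(n(r+1))}$ that will need to be absorbed later. I will then split $(\beta_n(r))^{m_j}=(\beta_n(r))^{m_j-n}(\beta_n(r))^n$ and use the identity $\beta_n(r)=\lambda_r^{-1/n}\rho_n(r)$ to rewrite $(\beta_n(r))^n=\lambda_r^{-1}(\rho_n(r))^n=\kappa_{\sigma(r)}^{1/(r+1)}(\rho_n(r))^n$, producing the target factor $(\rho_n(r))^n$.

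The central algebraic step is to show $x_{j-1}^{(\alpha-1)(r+1)}(\beta_n(r))^{m_j-n}\leq(\tau_n(r))^{m_j-n}$. I will rewrite $x_{j-1}^{(\alpha-1)(r+1)}=\kappa_{\sigma(r)}^{(n+1-j)(1-\alpha)/n}$ using \eqref{Sec4:PartitionOI}, and note that for $r>0$, $\beta_n(r)=\kappa_{\sigma(r)}^{1/(n(r+1))}-1\leq\kappa_{\sigma(r)}^{1/n}-1$; the desired inequality then reduces to $(n+1-j)(1-\alpha)\leq m_j-n$, which holds by the definition $m_j=n+\lceil(n+1-j)(1-\alpha)\rceil$. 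Applying the first bound in Lemma~\ref{lem:Sec5Pre} gives $(\tau_n(r))^{m_j-n}/(m_j-2)!\leq c(n-1)^{-1/2}$, and summing over $j$ with Lemma~\ref{lem:Sec5OrderP}'s estimate $\sum 1/(m_j-1)\leq 1$ removes the $m_j$-dependence.

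What remains is absorbing the residual factor $\kappa_{\sigma(r)}^{r/(n(r+1))}(n-1)^{-1/2}$ into the constant; here the second bound in Lemma~\ref{lem:Sec5Pre}, namely $\kappa_{\sigma(r)}^{1/n}\leq 2(n-1)^{1/2}$, is crucial, since it yields $\kappa_{\sigma(r)}^{r/(n(r+1))}(n-1)^{-1/2}\leq 2^{r/(r+1)}(n-1)^{-1/(2(r+1))}\leq 2$. The node count follows from formula~\eqref{alm:Sec5E}, the bound $N_j\leq\lceil(r+1)\kappa_{\sigma(r)}^{r/(n(r+1))}\sigma(r)/\delta(r)\rceil$ from Lemma~\ref{lem:Sec4Bound}, and $\sum_{j\in\mathbb{Z}_n^+}m_j\leq(2n^2+\lceil1-\alpha\rceil(n^2+n))/2$ obtained by summing the ceilings in $m_j=n+\lceil(n+1-j)(1-\alpha)\rceil$. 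I expect the main obstacle to be the algebraic step in the third paragraph, as the delicate interplay between the powers of $\kappa_{\sigma(r)}$ appearing in $x_{j-1}$, $\beta_n(r)$, and $\tau_n(r)$ is where the choice of $m_j$ must be exactly tuned for the inequality to close.
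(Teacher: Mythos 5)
Your proposal is correct and follows essentially the same route as the paper's proof, which itself proceeds by mirroring Theorem \ref{thm:Sec5E0}: the key inequality $x_{j-1}^{(\alpha-1)(r+1)}(\beta_n(r))^{m_j}\leq(\tau_n(r))^{m_j-n}\lambda_r^{-1}(\rho_n(r))^n$, the use of both parts of Lemma \ref{lem:Sec5Pre} to absorb the $\kappa$-dependent bound on $q_j$ against $(n-1)^{-1/2}$, and the summation via Lemma \ref{lem:Sec5OrderP} all coincide with the paper's argument. Your verification that $(n+1-j)(1-\alpha)\leq m_j-n$ closes the central algebraic step exactly as intended, and the node count matches as well.
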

\begin{proof}
The proof is handled in the same way as that of Theorem \ref{thm:Sec5E0}.
Using $\beta_n(r)=\lambda_r^{-1/n}\rho_n(r)$ and the definition of $x_{j-1}$, we obtain that
$x_{j-1}^{(\alpha-1)(r+1)}(\beta_n(r))^{m_j}\leq\left(\tau_n(r)\right)^{m_j-n}\lambda_r^{-1}(\rho_n(r))^n$,
for $j\in\mathbb{Z}_n^+$.
By employing Lemmas \ref{lem:Sec4Bound} and \ref{lem:Sec5SubE}, the choice of $N_j$ and the inequality above, there exists a positive constant $c$  such that for all $\kappa>1$, $j\in\mathbb{Z}_n^+$ and $n\in\mathbb{N}$,
\begin{eqnarray*}
\mathcal{E}_j[f,g]\leq c \dfrac{(r+1)\abs{g(1)}^{-\alpha}\sigma(r)(\delta(r))^{\alpha-2}\kappa_{\sigma(r)}^{1/n}}{(m_j-1)\kappa^2}
\dfrac{\left(\tau_n(r)\right)^{m_j-n}}{(m_j-2)!}\lambda_r^{-1}(\rho_n(r))^n.
\end{eqnarray*}
Using the first result of Lemma \ref{lem:Sec5Pre}, we obtain that
there exists a positive constant $c$  such that for all $\kappa>1$, $j\in\mathbb{Z}_n^+$ and $n\in\mathbb{N}$  satisfying \eqref{Sec5:C},
\begin{eqnarray*}
\mathcal{E}_j[f,g]\leq c (r+1)\abs{g(1)}^{-\alpha}\sigma(r)(\delta(r))^{\alpha-2}\kappa_{\sigma(r)}^{1/n}(n-1)^{-1/2}\kappa^{-2}
\lambda_r^{-1}(\rho_n(r))^n/(m_j-1).
\end{eqnarray*}
This together with the second result of Lemma \ref{lem:Sec5Pre} and $\lambda_r^{-1}=\kappa_{\sigma(r)}^{{1}/{(r+1)}}$ ensures that there exists a positive constant $c$  such that for all $\kappa>1$,  $j\in\mathbb{Z}_n^+$ and
$n\in\mathbb{N}$ satisfying \eqref{Sec5:C},
\begin{equation*}
\mathcal{E}_j[f,g]\leq c
(r+1)\abs{g(1)}^{-\alpha}\sigma(r)(\delta(r))^{\alpha-2}\kappa^{-2}\kappa_{\sigma(r)}^{1/(r+1)}(\rho_n(r))^n/(m_j-1).
\end{equation*}
Summing up the inequality above over  $j\in\mathbb{Z}_n^+$ and
applying Lemma \ref{lem:Sec5OrderP}, we obtain the  estimate \eqref{Sec5:ErrorE}.

It remains to estimate $\mathcal{N}\left(\mathcal{Q}_{\kappa,n}^{\Lambda}[f,g]\right)$.
According to formula \eqref{alm:Sec5E} and Lemma \ref{lem:Sec4Bound}, we have for $n\in\mathbb{N}$
that
$\mathcal{N}\left(\mathcal{Q}_{\kappa,n}^{\Lambda}[f,g]\right)
\leq\sum_{j\in\mathbb{Z}_n^+}\Big{\{}\left\lceil q_j\right\rceil m_j+1\Big{\}}-(n-1)
\leq\left\lceil(r+1)\kappa_{\sigma(r)}^{r/(n(r+1))}\sigma(r)/\delta(r) \right\rceil\big{(}2n^2+\lceil1-\alpha\rceil(n^2+n)\big{)}/2+1$.
This yields the last conclusion.
\end{proof}

Note that the decay of the  bound of $\mathcal{E}_{\kappa,n}^{\Lambda}[f,g]$ given in Theorems \ref{thm:Sec5E0} and \ref{thm:Sec5E} is faster than the exponential decay with the base $\rho_{n_0}(r)$, where $n_0:=\min\set{n: n\in\mathbb{N}, n~\text{satisfying condition}~\eqref{Sec5:C}}$.
For $\kappa>1$,
we conclude that $0<\rho_{n_0}(r)<1$ and
$\lim\limits_{n\to\infty}(\rho_n(r))^n/(\rho_{n_0}(r))^n=0$.

The oscillatory integral \eqref{Sec2:OI0} may be computed by
$\mathcal{Q}_{\kappa,n}^{s,\gamma}[f,g]:=\mathcal{Q}_{\gamma}^{s}[f,g]+\mathcal{Q}_{\kappa,n}^{\Lambda}[f,g]$.
We next estimate the error $\mathcal{E}_{\kappa,n}^{\gamma,s}[f,g]$ of the quadrature formula $\mathcal{Q}_{\kappa,n}^{s,\gamma}[f,g]$ under reasonable hypotheses. Note that $-n\ln{\left(1-\gamma^{1+\mu}\right)}\geq \ln{\kappa_{\sigma(r)}}$ for $n\in\mathbb{N}$ implies that
$\rho_n(r)\leq \gamma^{1+\mu}$. If $r>0$ and $n=s$, then there exists a positive constant $c$ such that for all $\kappa>1$ and $n\in\mathbb{N}$ that satisfy \eqref{Sec5:C} and $-n\ln{\left(1-\gamma^{1+\mu}\right)}\geq \ln{\kappa_{\sigma(r)}}$, $\mathcal{E}_{\kappa,n}^{\gamma,s}[f,g]\leq c \kappa^{-(1+\mu)/(r+1)} \gamma^{(1+\mu)(n-1)}$. If $r=0$ and $n=s$, we consider two cases. In the case $-1<\mu<0$, there exists a positive constant $c$ such that for all $\kappa>1$ and $n\in\mathbb{N}$ satisfying \eqref{Sec5:C} and
$-n\ln{\left(1-\gamma^{1+\mu}\right)}\geq \ln{\kappa_{\sigma_0}}$, $\mathcal{E}_{\kappa,n}^{\gamma,s}[f,g] \leq c \kappa^{-1-\mu}\gamma^{(1+\mu)(n-1)}$. In the case $0\leq\mu<1$, there exists a positive constant $c$  such that for all $\kappa>1$ and $n\in\mathbb{N}$ satisfying \eqref{Sec5:C} and
$-n\ln{\left(1-\gamma^{1+\mu}\right)}\geq \ln{\kappa_{\sigma_0}}$,
$\mathcal{E}_{\kappa,n}^{\gamma,s}[f,g]\leq c \kappa^{-1}\gamma^{(1+\mu)(n-1)}$.


\section{Numerical Experiments}

In this section, we present numerical results of seven computational experiments to verify the approximation accuracy and computational efficiency of the proposed composite moment-free Filon-type (CMF) quadrature formulas, among which the quadrature formulas with a polynomial order of convergence will be denoted by CMFP and the quadrature formulas with an exponential order of convergence will be denoted by CMFE. The numerical experiments to be presented are divided into two groups. The first three examples are about the CMFP formulas and the last four examples concern with the CMFE formulas. We also compare the computational performance of the proposed quadrature formulas with that of the quadrature rules proposed in \cite{Graham1, Graham}.
The numerical results presented below are all obtained by using Matlab in a modest desktop (a Core 2 Quad with 4Gb of Ram memory).

In the first example, we test the CMFP formula \eqref{alm:Sec3P} for
computing the oscillator integral with smooth integrand and linear oscillator.

\begin{example}\label{egP:1} \rm
The purpose of this example is to confirm the theoretical estimate presented in Proposition \ref{prop:Sec3P} for the CMFP formula \eqref{alm:Sec3P} which approximates the integral $\mathcal{I}_\kappa[f,g]$. In this example, we consider the functions $f(x):={\rm e}^x$ and $g(x):=x$ for $x\in I$. The exact value of the corresponding integral can be computed exactly, and thus, the errors presented below are computed using the exact value.
\vspace{-1em}
\begin{center}
\footnotesize
\makeatletter
\def\@captype{table}
\makeatother
{\small \caption{Numerical results of CMFP for $f(x):={\rm e}^x$ and $g(x):=x$}
\vspace{0.2cm}
\label{Sec6:Tab1EgP1}}{
\begin{tabular}{c|c|c|c|c|c|c|c|c}
\hline
\multirow{2}{*}{$ \kappa$}
&\multicolumn{2}{c|}{ $n=5$} &\multicolumn{2}{c|}{$n=15$}
&\multicolumn{2}{c|}{ $n=20$}&\multicolumn{2}{c}{ $n=30$}\\
\cline{2-9}
&RE& $\mathcal{N}$&RE& $\mathcal{N}$&RE& $\mathcal{N}$&RE& $\mathcal{N}$ \\
\hline
$10^2$&4.68e-7 &21&8.93e-8 &61&4.13e-8 &81&1.15e-8 &121\\
$10^3$&2.43e-7 &21&2.21e-9 &61&1.24e-8 &81&1.95e-9 &121\\
$10^4$&4.62e-8 &21&2.99e-9 &61&1.77e-9 &81&1.62e-10&121\\
$10^5$&5.72e-9 &21&6.38e-10&61&3.47e-10&81&1.65e-11&121\\
$10^6$&1.36e-10&21&2.62e-10&61&1.26e-10&81&2.67e-12&121\\
$10^7$&4.54e-11&21&1.51e-11&61&5.93e-12&81&2.79e-12&121\\
\hline
\end{tabular}}
\end{center}

\vspace{-1em}
\begin{center}
\makeatletter
\def\@captype{figure}
\makeatother
\includegraphics[width=5.0cm]{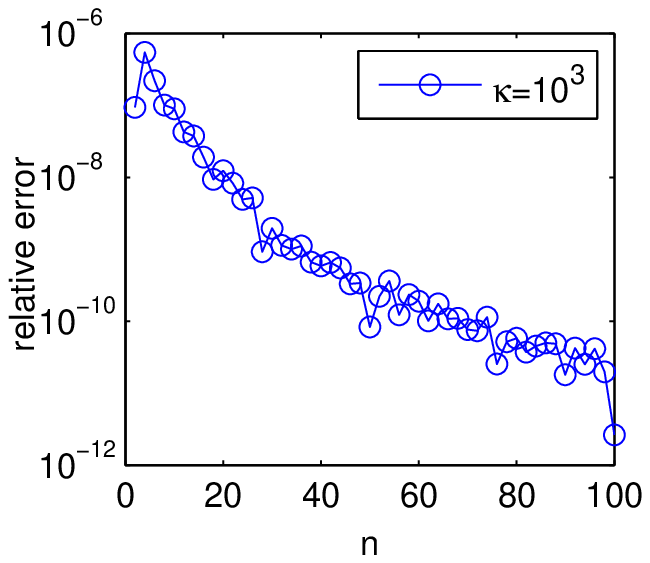}\ \hspace{2cm} \
\includegraphics[width=5.0cm]{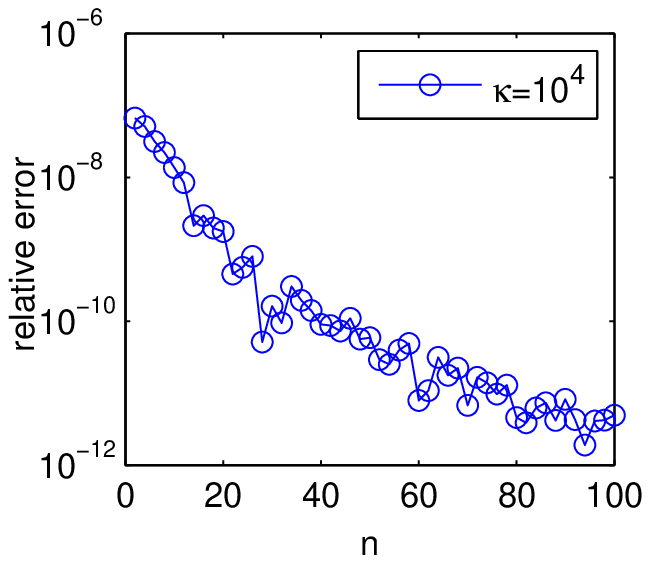}\\
\centerline{\small\ \ \ A:~RE of $\mathcal{Q}_{\kappa,n,4}[f,g]$\hspace{4cm}\ \ \ \ B:~RE of $\mathcal{Q}_{\kappa,n,4}[f,g]$}
\vspace{-1em}
{\small\caption{RE for $f(x):={\rm e}^x$ and $g(x):=x$}
\label{Sec6:Fig1EgP1}}
\end{center}

We present in Table \ref{Sec6:Tab1EgP1} the relative error (RE) of $\mathcal{Q}_{\kappa,n,4}[f,g]$ and the number $\mathcal{N}$ of functional evaluations used in the formula for different values of $\kappa$ and different choices of $n$. We plot in Figure \ref{Sec6:Fig1EgP1} the RE values of the quadrature formula for (A) $\kappa=10^3$ and for (B) $\kappa=10^4$. Figure \ref{Sec6:Fig1EgP1} and Table \ref{Sec6:Tab1EgP1} confirm that for a fixed $\kappa$ the approximation accuracy of the quadrature formula increases as $n$ grows and for a fixed $n$ it increases as $\kappa$ grows.


We verify in Figure \ref{Sec6:Fig2EgP1} the order in $1/\kappa$ of the error $\mathcal{E}_{\kappa,8,4}[f,g]$. We plot the  error $\mathcal{E}_{\kappa,8,4}[f,g]$ scaled by $\kappa^2$ in
(A) and that scaled by $\kappa^3$ in (B), for $n=8$ with $\kappa$ changing from $10^2$ to $10^3$.
Comparing (A) and (B) of Figure \ref{Sec6:Fig2EgP1}, we observe that the decay of the error $\mathcal{E}_{\kappa,8,4}[f,g]$  is faster than $\mathcal{O}(\kappa^{-2})$, but is slower than $\mathcal{O}(\kappa^{-4})$.

\begin{center}
\makeatletter
\def\@captype{figure}
\makeatother
\includegraphics[width=5.0cm]{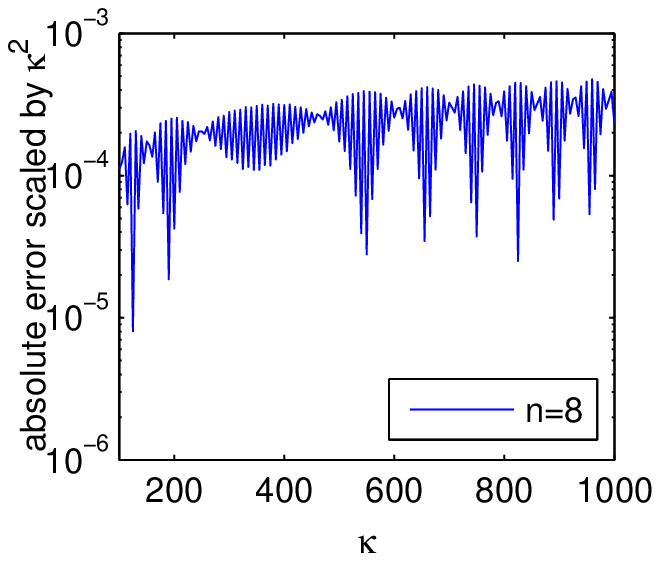}\ \hspace{2cm} \
\includegraphics[width=5.0cm]{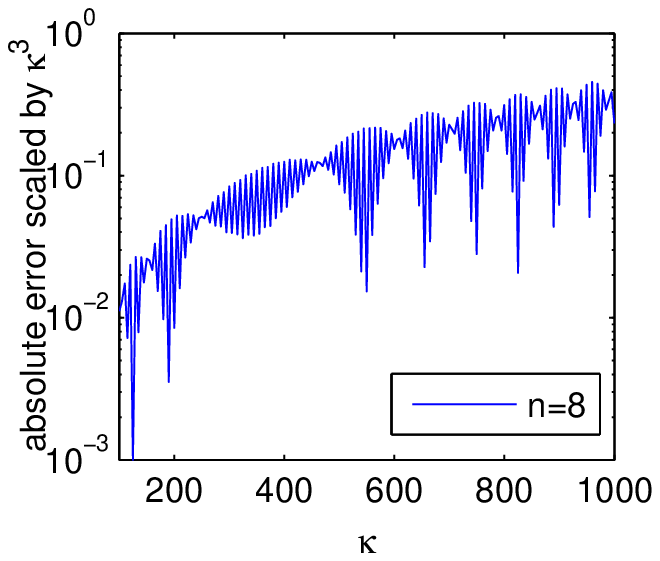}\\
\centerline{\small A:~$\kappa^2\mathcal{E}_{\kappa,8,4}[f,g]$\hspace{5cm} B:~$\kappa^3\mathcal{E}_{\kappa,8,4}[f,g]$}
\vspace{-1em}
{\small\caption{ Error $\mathcal{E}_{\kappa,8,4}[f,g]$ scaled by $\kappa^2$ and $\kappa^3$ for $f(x):={\rm e}^x$ and $g(x):=x$}
\label{Sec6:Fig2EgP1}}
\end{center}
\end{example}


We next compare the performance of the proposed  quadrature formulas with that of the existing
formulas described in \cite{Graham1, Graham}. To this end, we recall the quadrature formulas of
\cite{Graham1, Graham}. Following \cite{Graham1, Graham}, by $\beta\in(-1,1)$ we denote the index of singularity of $f$, $N\in\mathbb{N}$ denote the degree of the polynomial interpolant, $q>0$ denote the grading parameter and $M\in\mathbb{N}$ denote the number of the subintervals. The Filon-Clenshaw-Curtis (FCC) rule proposed in  \cite{Graham1} for the integral $\mathcal{I}_{\kappa}^{[-1,1]}[f,g]$ with a smooth integrand $f$ and a linear oscillator $g$ was designed by approximating $f$ by its Lagrange  interpolation $\mathcal{Q}_N(f)$ of degree $N$ at the Clenshaw-Curtis points $t_j:=\cos{(j\pi/N)}$ for $j\in\mathbb{Z}_N$. The FCC rule proposed in  \cite{Graham1} for the integral \eqref{Sec2:OI0} with a smooth integrand and a linear oscillator (that is, $g(x)=x$) is given by $\mathcal{I}_{\kappa,N}^{[0,1]}(f):={\rm e}^{{\rm i}\kappa/2}\mathcal{I}_{\kappa/2}^{[-1,1]}[\mathcal{Q}_N(\tilde{f}),g]/2$, where $\tilde{f}(x):=f((x+1)/2)$ for $x\in[-1,1]$. When $f$ has an integrable singularity at the origin, a composite Filon-Clenshaw-Curtis  (CFCC) quadrature rule was proposed in \cite{Graham}. Specifically, a positive integer $M$ is chosen, $q$ that satisfies the condition $q>(N+1-r)/(1+\beta-r)$ for some $0\leq r<1+\beta$ is picked, and a partition of the interval $I$ is formed with nodes $x_j:=M^{-q}j^q$ for $j\in\mathbb{Z}_M$. On all (but the first) subintervals of the partition, the FCC rule is applied. The treatment of the first subinterval depends on the value of the parameter $\beta$. For $\beta\in(-1,0]$, the integral $\mathcal{I}_{\kappa}^{[x_0,x_1]}[f,g]$ is approximated by zero. For $\beta\in(0,1)$ with $x_1\kappa\geq1$, the linear function interpolating the two pints $(x_0,f(x_0))$ and $(x_1,f(x_1))$ is used to approximate the function $f$ on $[x_0,x_1]$. While for $\beta\in(0,1)$ with $x_1\kappa<1$, the integrand of \eqref{Sec2:OI0} on $[x_0,x_1]$ is approximated by the linear function interpolating $(x_0,f(x_0){\rm e}^{{\rm i}\kappa g(x_0)})$ and $(x_1,f(x_1){\rm e}^{{\rm i}\kappa g(x_1)})$. The FCC rule with same number $N+1$ of points is used to calculate the integrals defined on the subintervals $[x_j, x_{j+1}]$ for $j\in\mathbb{Z}_{M-1}^+$. Let
$\widetilde{\mathcal{I}_{\kappa}}^{[x_0,x_1]}(f)$ denote the approximation of the integral $\mathcal{I}_{\kappa}^{[x_0,x_1]}[f,g]$. The CFCC rule is formed by
$\mathcal{I}_{\kappa,N,M,q}^{[0,1]}(f):=\widetilde{\mathcal{I}_{\kappa}}^{[x_0,x_1]}(f)+
\sum_{j\in\mathbb{Z}_{M-1}^+}h_j{\rm e}^{{\rm i}\kappa d_j}\mathcal{I}_{h_j\kappa}^{[-1,1]}[\mathcal{Q}_N(f_j),g]$,
where $h_j:=(x_{j+1}-x_j)/2$, $d_j:=(x_{j+1}+x_j)/2$ and $f_j(x):=f(h_jx+d_j)$ with $x\in[-1,1]$ for $j\in\mathbb{Z}_{M-1}^+$.
When $g$ is a strictly increasing nonlinear function and has a single stationary point at zero, we calculate the integral \eqref{Sec2:OI1} instead  by a change of variables $g(x)\to x$ and mapping $[g(0),g(1)]$ onto $I$.
The quadrature formulas proposed in \cite{Graham1, Graham} are implemented by using the Matlab code available in \cite{Dominguez}.

We now discuss the computation of the errors to be presented in Examples \ref{egP:2}, \ref{egP:3}, \ref{egE:2} and \ref{egE:3}. The true values of the integrals considered in these examples cannot be computed exactly. Therefore, we need to find their appropriate substitutions. This is done by the
means of the Gamma function. To this end, we let $\mathcal{I}_\kappa[0]:=\int_0^1\ln{x}{\rm e}^{{\rm i}\kappa x}{\rm d} x$ and $\mathcal{I}_\kappa[\alpha]:=\int_0^1x^{\alpha}{\rm e}^{{\rm i}\kappa x}{\rm d} x$,  for $\alpha\in(-1,0)\cup(0,1)$.
According to  \cite{Gradshteyn} (see also \cite{Graham}), we have that
$\mathcal{I}_\kappa[0]=-\left({\pi}/{2}+{\rm i}\varsigma+{\rm i}\ln{\kappa}\right)/\kappa+\mathcal{O}(1/\kappa^2)$ and
$\mathcal{I}_\kappa[\alpha]=\left(\Gamma(1+\alpha){\rm e}^{{\rm i}\pi(1+\alpha)}({\rm i}\kappa)^{-(1+\alpha)}+
{\rm e}^{{\rm i}\kappa}/({\rm i}\kappa)\right)(1+\mathcal{O}(1/\kappa))$, for $\alpha\in(-1,0)\cup(0,1)$,
where $\Gamma$ denotes the Gamma function, ${\rm arg}({\rm i}\kappa)=\pi/2$, and
$\varsigma$ denotes the Euler-Macheroti constant having an approximate value $0.5772$. We shall use $\mathcal{I}^a_\kappa[0]:=-\left({\pi}/{2}+{\rm i}\varsigma+{\rm i}\ln{\kappa}\right)/\kappa$ and
$\mathcal{I}^a_\kappa[\alpha]:=\Gamma(1+\alpha){\rm e}^{{\rm i}\pi(1+\alpha)}({\rm i}\kappa)^{-(1+\alpha)}+
{\rm e}^{{\rm i}\kappa}/({\rm i}\kappa)$,  for $\alpha\in(-1,0)\cup(0,1)$, respectively, to approximate the exact values of $\mathcal{I}_\kappa[0]$ and $\mathcal{I}_\kappa[\alpha]$, and substitute them when we compute errors of quadrature rules.

\newpage

\begin{center}
\footnotesize
\makeatletter
\def\@captype{table}
\makeatother
{\small  \caption{Comparison of CMFP and CFCC for $f_1(x):=x^{1/2}$ and $g(x):=x$}
\vspace{0.2cm}
\label{Sec6:Tab1EgP2}}{
\begin{tabular}{c|c|c|c|c|c|c|c|c}
\hline
 \multirow{3}{*}{$\kappa$}
&\multicolumn{4}{c|}{CMFP}&\multicolumn{4}{c}{CFCC}\\
\cline{2-9}
&\multicolumn{2}{c|}{$n=5$}&\multicolumn{2}{c|}{$n=10$}&\multicolumn{2}{c|}{$M=7$}
&\multicolumn{2}{c}{$M=14$}\\
\cline{2-9}
&RE&$\mathcal{N}$&RE&$\mathcal{N}$&RE&$\mathcal{N}$&RE&$\mathcal{N}$\\
\hline
$10^2$&5.03e-3&37&5.10e-3&77&5.09e-3&38&5.09e-3&80\\
$10^3$&5.80e-4&37&5.12e-4&77&5.65e-4&38&5.14e-4&80\\
$10^4$&8.40e-5&37&5.41e-5&77&1.52e-4&38&5.10e-5&80\\
$10^5$&8.58e-5&37&6.94e-6&77&7.65e-5&38&6.33e-6&80\\
$10^6$&3.26e-5&37&5.24e-6&77&2.17e-4&38&3.06e-6&80\\
$10^7$&3.02e-5&37&2.55e-6&77&3.84e-4&38&8.58e-6&80\\
\hline
\end{tabular}}
\end{center}

In the next two examples, we test the efficiency of the quadrature rule proposed in Section 4 for
calculating the oscillatory integrals  with singular $f$ and with/without a stationary point of $g$.

\begin{example}\label{egP:2} \rm
This example  is designed to verify the theoretical estimates given in Theorems \ref{thm:Sec4SP} and \ref{thm:Sec4P0} for the CMFP formula $\mathcal{Q}_{\kappa,n,m}^{s,\widetilde{m}}[f,g]$, which approximates the integral $\mathcal{I}_\kappa[f,g]$.
We consider three functions $f_1(x):=x^{1/2}$,  $f_2(x):=\ln{x}$, $f_3(x):=x^{-1/2}$ and $g(x):=x$ for $x\in I$.
When we compute the errors of the quadrature formulas, the true values of the integrals $\mathcal{I}_\kappa[f_j,g]$ are computed by using $\mathcal{I}^a_\kappa[\alpha_j]$,  for $j\in\mathbb{Z}_3^+$, respectively, where $\alpha_1:=1/2$, $\alpha_2:=0$ and $\alpha_3:=-1/2$.

\begin{center}
\footnotesize
\makeatletter
\def\@captype{table}
\makeatother
{\small \caption{Comparison of CMFP and CFCC for $f_2(x):=\ln{x}$ and $g(x):=x$}
\vspace{0.2cm}
\label{Sec6:Tab2EgP2}}{
\begin{tabular}{c|c|c|c|c|c|c|c|c}
\hline
 \multirow{3}{*}{$\kappa$}
 &\multicolumn{4}{c|}{CMFP}&\multicolumn{4}{c}{CFCC}\\
 \cline{2-9}
&\multicolumn{2}{c|}{$n=5$}&\multicolumn{2}{c|}{$n=10$}&\multicolumn{2}{c|}{$M=7$}
&\multicolumn{2}{c}{$M=14$}\\
\cline{2-9}
&RE&$\mathcal{N}$&RE&$\mathcal{N}$&RE&$\mathcal{N}$&RE&$\mathcal{N}$\\
\hline
$10^2$&1.75e-3&37&1.86e-3&77&4.15e-3&37&1.86e-3&79\\
$10^3$&1.72e-3&37&1.19e-4&77&8.87e-3&37&1.93e-4&79\\
$10^4$&2.90e-3&37&1.15e-4&77&1.59e-2&37&1.65e-4&79\\
$10^5$&6.23e-3&37&1.42e-4&77&5.14e-3&37&2.45e-4&79\\
$10^6$&1.08e-2&37&9.16e-4&77&4.76e-2&37&3.99e-4&79\\
$10^7$&1.60e-2&37&1.33e-3&77&1.53e-2&37&1.56e-3&79\\
\hline
\end{tabular}}
\end{center}

Numerical results of this example are presented in Tables \ref{Sec6:Tab1EgP2}, \ref{Sec6:Tab2EgP2} and \ref{Sec6:Tab3EgP2}.  We compare the RE values of the formula
$\mathcal{Q}_{\kappa,n,4}^{n,4}[f_1,g]$ and the CFCC formula $\mathcal{I}_{\kappa,6,M,8}^{[0,1]}(f_1)$ in Table \ref{Sec6:Tab1EgP2}, those of the formula $\mathcal{Q}_{\kappa,n,4}^{n,4}[f_2,g]$ and the CFCC formula $\mathcal{I}_{\kappa,6,M,12}^{[0,1]}(f_2)$ in Table \ref{Sec6:Tab2EgP2}, and those of the formula $\mathcal{Q}_{\kappa,n,4}^{n,4}[f_3,g]$ and  the CFCC formula $\mathcal{I}_{\kappa,6,M,16}^{[0,1]}(f_3)$ in Table \ref{Sec6:Tab3EgP2}. These numerical results show that overall the CMFP formula has higher approximation accuracy than the CFCC rule.

\begin{center}
\footnotesize
\makeatletter
\def\@captype{table}
\makeatother
{\small  \caption{Comparison of CMFP and CFCC for $f_3(x):=x^{-1/2}$ and $g(x):=x$}
\vspace{0.2cm}
\label{Sec6:Tab3EgP2}}{
\begin{tabular}{c|c|c|c|c|c|c|c|c}
\hline
 \multirow{3}{*}{$\kappa$}
 &\multicolumn{4}{c|}{CMFP}&\multicolumn{4}{c}{CFCC}\\
 \cline{2-9}
&\multicolumn{2}{c|}{$n=5$}&\multicolumn{2}{c|}{$n=10$}&\multicolumn{2}{c|}{$M=7$}
&\multicolumn{2}{c}{$M=14$}\\
\cline{2-9}
&RE&$\mathcal{N}$&RE&$\mathcal{N}$&RE&$\mathcal{N}$&RE&$\mathcal{N}$\\
\hline
$10^2$&2.89e-2&37&1.25e-3&77&1.18e-2&37&4.07e-4&79\\
$10^3$&2.50e-2&37&9.23e-4&77&1.01e-2&37&6.62e-4&79\\
$10^4$&3.14e-2&37&7.49e-4&77&2.57e-2&37&1.05e-3&79\\
$10^5$&3.38e-2&37&5.54e-4&77&4.49e-1&37&4.82e-3&79\\
$10^6$&8.55e-2&37&2.22e-3&77&1.12e-1&37&1.70e-2&79\\
$10^7$&1.12e-1&37&5.93e-3&77&2.06   &37&4.02e-2&79\\
\hline
\end{tabular}}
\end{center}

\end{example}

\begin{example}\label{egP:3} \rm
This example is to verify the estimates proved in Theorems \ref{thm:Sec4SP} and \ref{thm:Sec4P} for the CMFP formula $\mathcal{Q}_{\kappa,n,m}^{s,\widetilde{m}}[f,g]$, which approximates  $\mathcal{I}_\kappa[f,g]$.
In this example, we consider the functions $f(x):=x^{-1/2}$ and $g(x):=x^2$ for $x\in I$. When we compute the errors of the quadrature formulas, the true value of this integral is computed by  using $I^a_\kappa[-3/4]/2$.

\begin{center}
\makeatletter
\def\@captype{figure}
\makeatother
\includegraphics[width=5.0cm]{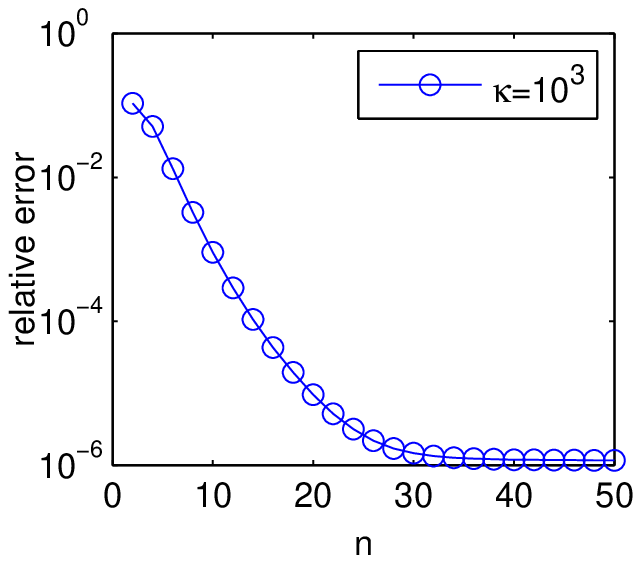}\ \hspace{2.5cm} \
\includegraphics[width=5.0cm]{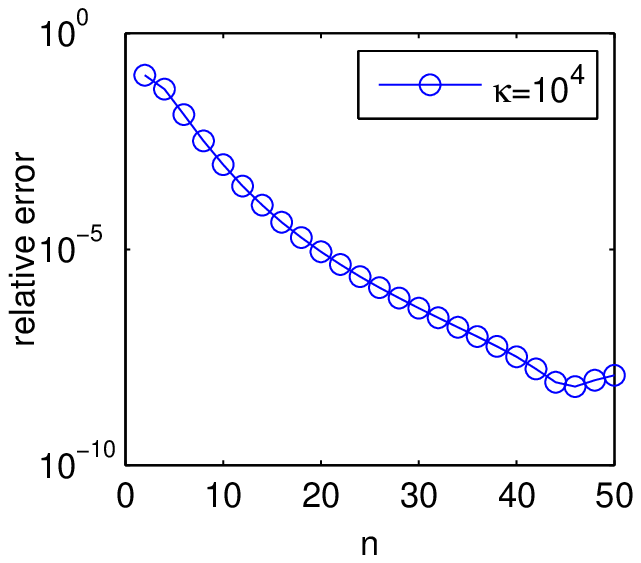}\\
\centerline{\small A:~RE of $\mathcal{Q}_{\kappa,n,4}^{n,4}[f,g]$  \hspace{4cm}\ \ \  B:~RE of $\mathcal{Q}_{\kappa,n,4}^{n,4}[f,g]$ }
\vspace{-1em}
{\small\caption{RE for  $f(x):=x^{-1/2}$ and $g(x):=x^2$
\label{Sec6:Fig1EgP3}}}
\end{center}

\begin{center}
\makeatletter
\def\@captype{figure}
\makeatother
\includegraphics[width=5.0cm]{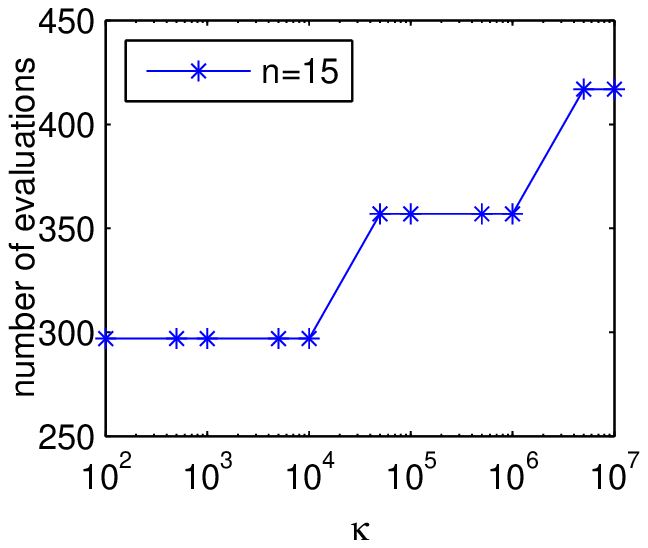}\ \hspace{2.5cm}  \
\includegraphics[width=5.0cm]{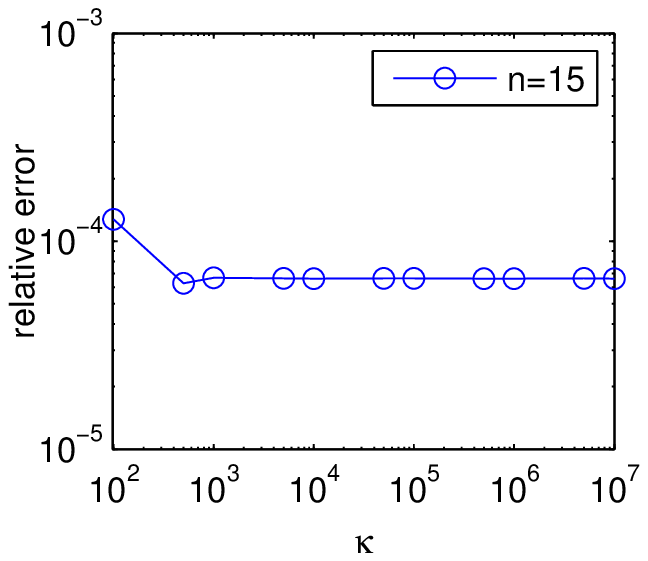}\\
\centerline{\small A:~$\mathcal{N}(\mathcal{Q}_{\kappa,n,4}^{n,4}[f,g])$ \hspace{5cm} B:~RE of $\mathcal{Q}_{\kappa,n,4}^{n,4}[f,g]$}
\vspace{-1em}
{\small\caption{ $\mathcal{N}$ and RE for $f(x):=x^{-1/2}$ and $g(x):=x^2$
\label{Sec6:Fig2EgP3}}}
\end{center}

Numerical results of this example are presented in Figure \ref{Sec6:Fig1EgP3}--\ref{Sec6:Fig4EgP3} and Table \ref{Sec6:Tab1EgP3}.
We plot in Figure \ref{Sec6:Fig1EgP3} the RE values of the formula  $\mathcal{Q}_{\kappa,n,4}^{n,4}[f,g]$ for (A) $\kappa=10^3$ and for (B) $\kappa=10^4$. We present in Figure \ref{Sec6:Fig2EgP3} the number $\mathcal{N}$ of functional evaluations used in the formula $\mathcal{Q}_{\kappa,15,4}^{15,4}[f,g]$ (A) and the RE values (B), and  those of the formula $\mathcal{Q}_{\kappa,30,4}^{30,4}[f,g]$ in Figure \ref{Sec6:Fig3EgP3}.
We plot in Figure \ref{Sec6:Fig4EgP3} the RE values obtained from  the CMFP and CFCC formulas  by using the same number of functional evaluations for (A) $\kappa=10^4$, for (B) $\kappa=10^5$ and for (C) $\kappa=10^6$. The RE values of the formula  $\mathcal{Q}_{\kappa,n,4}^{n,4}[f,g]$ and those of the CFCC formula  $\mathcal{I}_{\kappa,4,M,21}^{[0,1]}\left((f/g')\circ g^{(-1)}\right)$ are reported in  Table \ref{Sec6:Tab1EgP3}.

\begin{center}
\makeatletter
\def\@captype{figure}
\makeatother
\includegraphics[width=5.0cm]{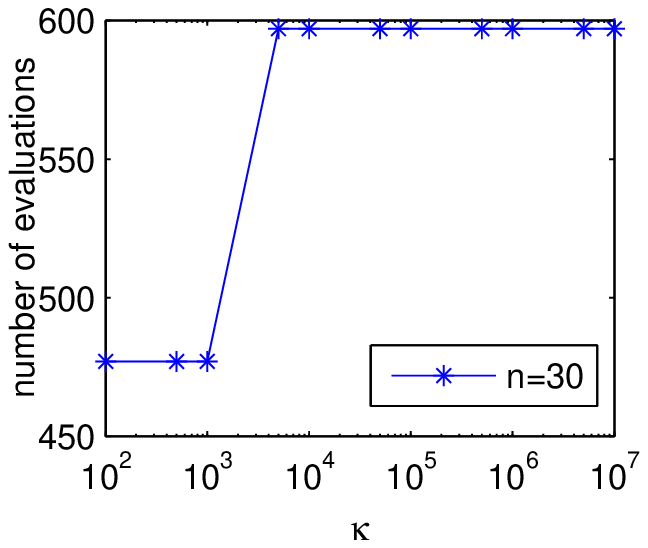}\ \hspace{2.5cm}  \
\includegraphics[width=5.0cm]{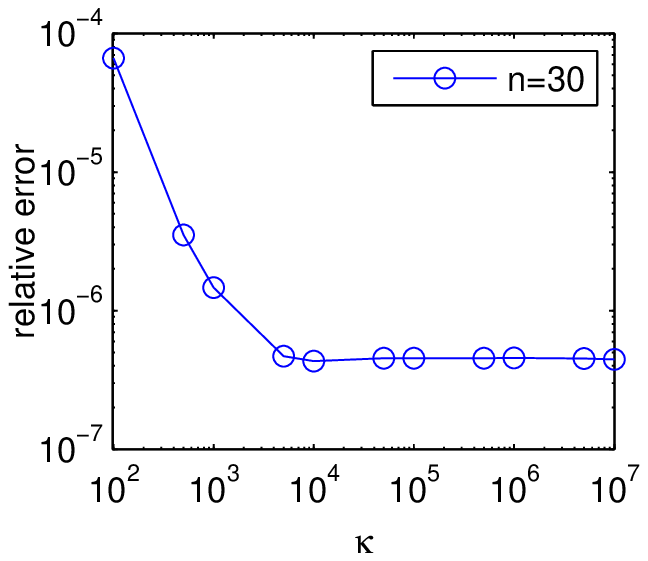}\\
\centerline{\small A:~$\mathcal{N}(\mathcal{Q}_{\kappa,n,4}^{n,4}[f,g])$ \hspace{5cm} B:~RE of $\mathcal{Q}_{\kappa,n,4}^{n,4}[f,g]$}
\vspace{-1em}
{\small\caption{ $\mathcal{N}$ and RE for $f(x):=x^{-1/2}$ and $g(x):=x^2$
\label{Sec6:Fig3EgP3}}}
\end{center}

From these numerical results, we have the following conclusions.
For a fixed  $\kappa$ the approximation accuracy of the CMFP formula increases as $n$ grows.
The CMFP formula achieves an accuracy of an order higher than the CFCC formula for large $\kappa$ and is effective and convenient for implementation.

\newpage

\begin{center}
\footnotesize
\makeatletter
\def\@captype{table}
\makeatother
{\small \caption{ Comparison of CMFP and CFCC for $f(x):=x^{-1/2}$ and $g(x):=x^2$}
\vspace{0.2cm}
\label{Sec6:Tab1EgP3}}{
\begin{tabular}{c|c|c|c|c|c|c|c|c}
\hline
 \multirow{3}{*}{$\kappa$}
&\multicolumn{4}{c|}{CMFP}&\multicolumn{4}{c}{CFCC}\\
\cline{2-9}
&\multicolumn{2}{c|}{ $n=30$ }&\multicolumn{2}{c|}{$n=50$}
&\multicolumn{2}{c|}{$M=200$}&\multicolumn{2}{c}{$M=400$}\\
\cline{2-9}
&RE&$\mathcal{N}$&RE&$\mathcal{N}$&RE&$\mathcal{N}$&RE&$\mathcal{N}$\\
\hline
$10^2$&6.62e-5&477&6.59e-5&797&6.60e-5&797&6.61e-5&1597\\
$10^3$&1.47e-6&477&1.17e-6&797&8.81e-7&797&9.50e-7&1597\\
$10^4$&4.33e-7&597&1.21e-8&797&6.01e-7&797&3.42e-7&1597\\
$10^5$&4.52e-7&597&8.95e-9&797&4.57e-7&797&3.92e-7&1597\\
$10^6$&4.55e-7&597&9.31e-9&997&2.91e-7&797&3.07e-7&1597\\
$10^7$&4.45e-7&597&8.88e-9&997&5.64e-7&797&3.72e-7&1597\\
\hline
\end{tabular}}
\end{center}

\begin{center}
\makeatletter
\def\@captype{figure}
\makeatother
\includegraphics[width=5.0cm]{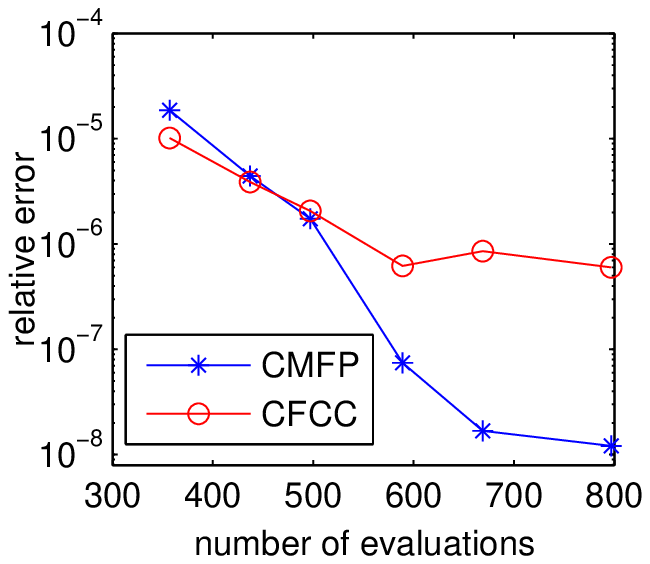}\hspace{0.8cm}
\includegraphics[width=5.0cm]{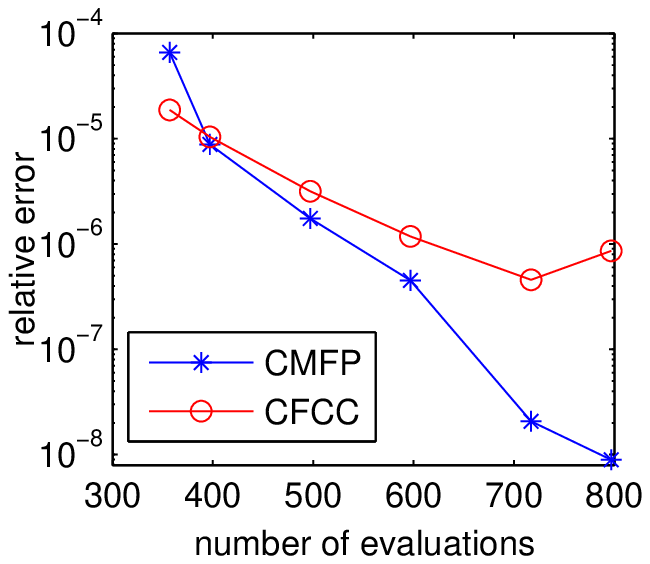}\hspace{0.8cm}
\includegraphics[width=5.0cm]{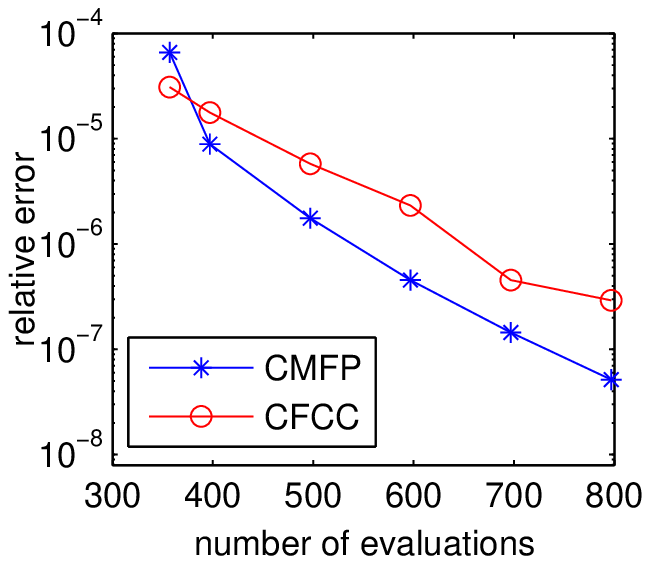}\\
\centerline{\small A:~ $\kappa=10^4$\hspace{4cm} B:~ $\kappa=10^5$\hspace{4cm} C:~ $\kappa=10^6$}
\vspace{-1em}
{\small\caption{ Comparison of CMFP and CFCC for $f(x):=x^{-1/2}$ and $g(x):=x^2$
\label{Sec6:Fig4EgP3}}}
\end{center}
\end{example}

The numerical results of these three examples indicate that the CMFP formulas  are stable, easy to implement and computationally inexpensive. For irregular oscillator, the performance of the CMFP formulas  is superior to that of the FCC and CFCC formulas. The advantage of the proposed formulas is especially obvious for the oscillator with stationary points. We next show the numerical results  calculated by the CMFE formulas.

In  Example \ref{egE:1},  we test the CMFE formula \eqref{alm:Sec3E} for
computing the oscillator integral considered in Example \ref{egP:1} and compare with the CMFP formula \eqref{alm:Sec3P}.

\begin{example}\label{egE:1} \rm
This example is to confirm the estimate shown in Theorem \ref{thm:Sec3E} for the CMFE formula \eqref{alm:Sec3E} which approximates the integral $\mathcal{I}_\kappa[f,g]$. We consider in this example the functions $f(x):={\rm e}^x$ and $g(x):=x$ for $x\in I$, the same as those in Example \ref{egP:1}.

\vspace{-1em}
\begin{center}
\footnotesize
\makeatletter
\def\@captype{table}
\makeatother
{\caption{Numerical results of CMFE for $f(x):={\rm e}^x$ and $g(x):=x$}
\vspace{0.2cm}
\label{Sec6:Tab1EgE1}}{
\begin{tabular}{c|c|c|c|c}
\hline
\multirow{2}{*}{$ \kappa$}
&\multicolumn{2}{c|}{ $n=4$ }&\multicolumn{2}{c}{$n=5$}\\
\cline{2-5}
&RE&$\mathcal{N}$&RE&$\mathcal{N}$\\
\hline
$10^2$&9.65e-14&26&2.24e-13&47\\
$10^3$&2.17e-15&26&6.39e-15&47\\
$10^4$&7.36e-17&26&1.62e-15&47\\
$10^5$&1.14e-17&26&5.70e-17&47\\
$10^6$&1.30e-16&26&2.61e-16&47\\
$10^7$&1.50e-16&26&2.32e-16&47\\
\hline
\end{tabular}}
\end{center}

Numerical results of this example are reported in Table \ref{Sec6:Tab1EgE1} and Figure  \ref{Sec6:Fig1EgE1}.
We list in Table \ref{Sec6:Tab1EgE1} the RE values of  the formula $\mathcal{Q}_{\kappa,n}[f,g]$.
We depict the RE values of the formula $\mathcal{Q}_{\kappa,n}[f,g]$ in Figure \ref{Sec6:Fig1EgE1} (A)
and  the  error $\mathcal{E}_{\kappa,n}[f,g]$ scaled by $\kappa^{n+1}$ in Figure \ref{Sec6:Fig1EgE1} (B), for $n=4$ with $\kappa$ changing from $100$ to $5000$. From Table \ref{Sec6:Tab1EgE1} and Figure \ref{Sec6:Fig1EgE1}, we observe that the approximation accuracy of  the formula $\mathcal{Q}_{\kappa,n}[f,g]$ increases as $\kappa$ grows for a fixed $n$ and the asymptotic order of convergence is $\mathcal{O}(\kappa^{-n-1})$ for $n=4$. It concurs with the theoretical estimate. Comparing Figure \ref{Sec6:Fig1EgE1} (B) with Figure \ref{Sec6:Fig2EgP1} in Example \ref{egP:1}, we can  obtain an order of convergence faster than $\mathcal{O}(\kappa^{-3})$ by  using variable number of quadrature nodes in the subintervals.

\begin{center}
\makeatletter
\def\@captype{figure}
\makeatother
\includegraphics[width=5.0cm]{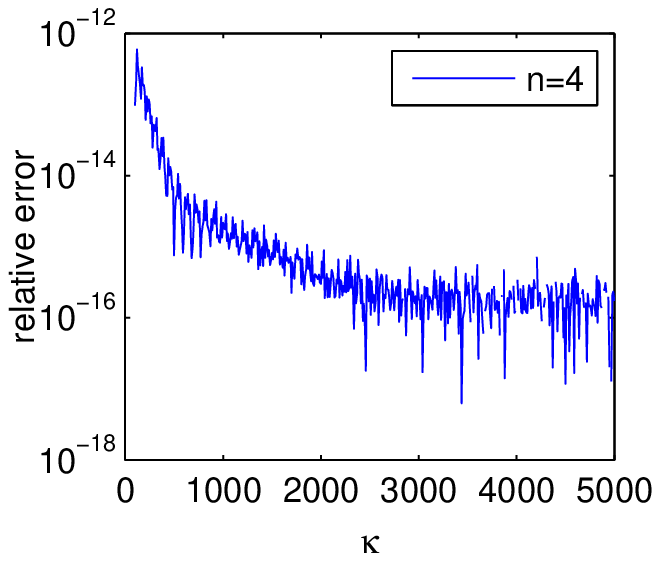}\ \hspace{2.5cm} \
\includegraphics[width=5.0cm]{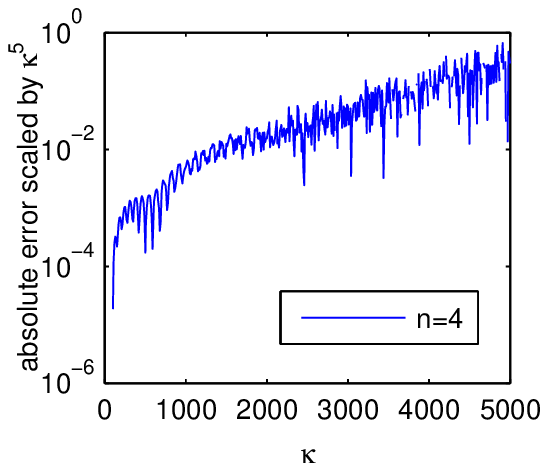}\\
\centerline{\small A:~RE of $\mathcal{Q}_{\kappa,n}[f,g]$\hspace{5cm} B:~$\kappa^{n+1}\mathcal{E}_{\kappa,n}[f,g]$}
\vspace{-1em}
\caption{\small RE  and error $\mathcal{E}_{\kappa,n}[f,g]$ scaled by $\kappa^{n+1}$ of Example \ref{egE:1}
\label{Sec6:Fig1EgE1}}
\end{center}
\end{example}

In the following two examples, we validate the efficiency of the quadrature rule proposed in Section 5 for
calculating the oscillatory integrals  with a stationary point, without/with singular $f$.

\begin{example}\label{egE:2}\rm
This example is to verify the estimates established in Theorems \ref{thm:Sec5SE} and \ref{thm:Sec5E} for the CMFE formula $\mathcal{Q}_{\kappa,n}^{s,\gamma}[f,g]$, which approximates the integral $\mathcal{I}_\kappa[f,g]$. We consider in this example the functions $f(x):=1$ and $g(x):=x^3$ for $x\in I$.
When we compute the errors of the quadrature formulas, the true value of this integral is computed by using $I^a_\kappa[-2/3]/3$.

\begin{center}
{\footnotesize
\makeatletter
\def\@captype{table}
\makeatother
{\small  \caption{ Comparison of CMFE and CFCC for $f(x):=1$ and $g(x):=x^3$}
\vspace{0.2cm}
\label{Sec6:Tab1EgE2}}{
\begin{tabular}{c|c|c|c|c|c|c|c|c}
\hline
 \multirow{3}{*}{$\kappa$}
&\multicolumn{4}{c|}{CMFE}&\multicolumn{4}{c}{CFCC}\\
\cline{2-9}
&\multicolumn{2}{c|}{$n=3$ }&\multicolumn{2}{c|}{$n=4$}
&\multicolumn{2}{c|}{$M=400$}&\multicolumn{2}{c}{$M=800$}\\
\cline{2-9}
&RE&$ \mathcal{N}$&RE&$\mathcal{N}$&RE&$\mathcal{N}$&RE&$\mathcal{N}$\\
\hline
$10^2$&1.17e-4 &307 &1.17e-4 &365 &1.18e-4 &1597 &1.17e-4&3197\\
$10^3$&2.49e-6 &407 &2.48e-6 &467 &2.62e-6 &1597 &2.43e-6&3197\\
$10^4$&5.36e-8 &607 &5.36e-8 &603 &5.07e-7 &1597 &3.04e-7&3197\\
$10^5$&1.08e-9 &907 &9.38e-10&841 &5.82e-7 &1597 &3.76e-7&3197\\
$10^6$&9.45e-11&1427&4.00e-10&1156&7.22e-7 &1597 &4.04e-7&3197\\
$10^7$&7.13e-11&2287&7.22e-10&1657&6.10e-7 &1597 &4.09e-7&3197\\
\hline
\end{tabular}}
}
\end{center}

Numerical results of this example are presented in  Table \ref{Sec6:Tab1EgE2} and Figure \ref{Sec6:Fig1EgE2}.
We list in Table \ref{Sec6:Tab1EgE2} the RE values of the formula $\mathcal{Q}_{\kappa,n}^{7,0.02}[f,g]$ and those of the CFCC formula  $\mathcal{I}^{[0,1]}_{\kappa,4,M,16}\left((f/g')\circ g^{(-1)}\right)$. In (A) of Figure \ref{Sec6:Fig1EgE2}, we depict the RE values of the formula $\mathcal{Q}_{\kappa,3}^{7,0.02}[f,g]$, which are listed in Row 2 of Table \ref{Sec6:Tab1EgE2} and the RE values obtained from the CFCC formula $\mathcal{I}^{[0,1]}_{\kappa,4,M,16}\left((f/g')\circ g^{(-1)}\right)$, where the numbers $\mathcal{N}=309$, $409$, $609$, $909$, $1429$, $2289$ of functional evaluations used in the formula are comparable to those used in  the CMFE formula recorded in Row 3 of Table \ref{Sec6:Tab1EgE2}. In (B) of Figure \ref{Sec6:Fig1EgE2}, we compare $\mathcal{N}(\mathcal{Q}_{\kappa,3}^{7,0.02}[f,g])$ with $(r+1)n^2\kappa^{1/n}$ with $r=2$ and $n=3$.

\begin{center}
\makeatletter
\def\@captype{figure}
\makeatother
\includegraphics[width=5.0cm]{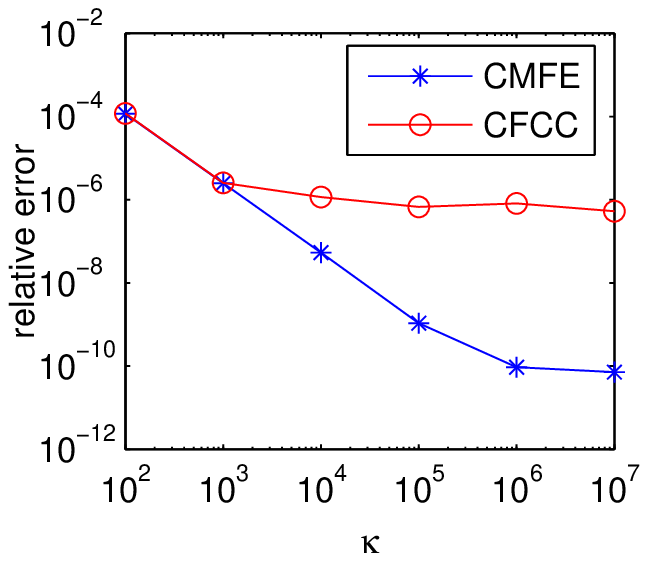}\ \hspace{2.5cm} \
\includegraphics[width=5.0cm]{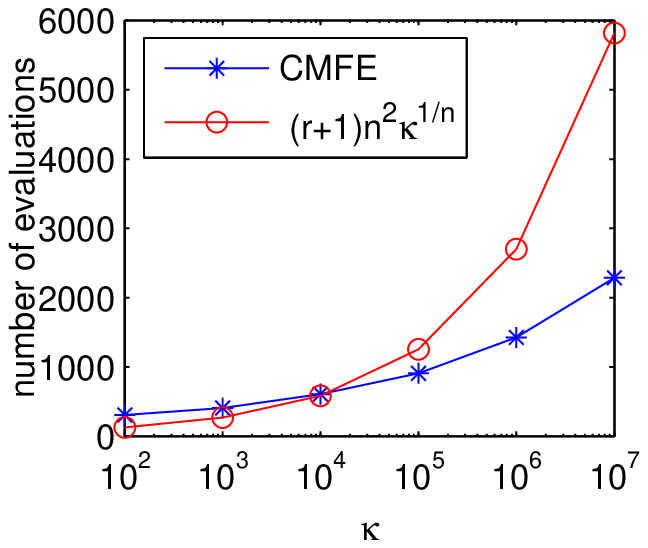}\\
\centerline{\small A:~RE of $\mathcal{Q}_{\kappa,3}^{7,0.02}[f,g]$ \hspace{4cm} B:~ $\mathcal{N}(\mathcal{Q}_{\kappa,3}^{7,0.02}[f,g])$}
\vspace{-1em}
{\small\caption{ RE and $\mathcal{N}$ for $f(x):=1$ and $g(x):=x^3$
\label{Sec6:Fig1EgE2}}}
\end{center}

\newpage
\begin{center}
\footnotesize
\makeatletter
\def\@captype{table}
\makeatother
{\small \caption{ Comparison of CMFE and CFCC for $f(x):=x^{-1/2}$ and $g(x):=x^2$}
\vspace{0.2cm}
\label{Sec6:Tab1EgE3}}{
\begin{tabular}{c|c|c|c|c|c|c|c|c}
\hline
\multirow{3}{*}{$\kappa$}
&\multicolumn{4}{c|}{CMFE}&\multicolumn{4}{c}{CFCC}\\
\cline{2-9}
&\multicolumn{2}{c|}{$n=3$}&\multicolumn{2}{c|}{$n=4$}
&\multicolumn{2}{c|}{$M=100$}&\multicolumn{2}{c}{$M=300$}\\
\cline{2-9}
&RE&$ \mathcal{N}$&RE&$\mathcal{N}$&RE&$\mathcal{N}$&RE&$\mathcal{N}$\\
\hline
$10^2$&6.59e-5&502 &6.59e-5&537&6.48e-5&397&6.58e-5&1197\\
$10^3$&1.19e-6&544 &1.16e-6&572&3.29e-6&397&1.47e-6&1197\\
$10^4$&6.23e-8&607 &1.75e-8&642&6.56e-6&397&4.52e-7&1197\\
$10^5$&4.50e-8&691 &2.74e-9&712&1.04e-5&397&3.28e-7&1197\\
$10^6$&1.90e-8&829 &2.13e-9&817&1.77e-5&397&4.93e-7&1197\\
$10^7$&1.09e-8&1027&4.12e-9&922&3.15e-5&397&4.45e-7&1197\\
\hline
\end{tabular}}
\end{center}

We conclude from the results in Table \ref{Sec6:Tab1EgE2} and Figure \ref{Sec6:Fig1EgE2} (A) that for a fixed $n$ the approximation accuracy of the formula $\mathcal{Q}_{\kappa,n}^{7,0.02}[f,g]$ increases as  $\kappa$ grows and $\mathcal{N}(\mathcal{Q}_{\kappa,n}^{7,0.02}[f,g])$ increases slow as $\kappa$ grows. Furthermore,
from the results presented in  Table \ref{Sec6:Tab1EgE2}, we observe that for $n=3$, $4$ and large $\kappa$, quadrature formula $\mathcal{N}(\mathcal{Q}_{\kappa,n}^{7,0.02}[f,g])$ has better approximation accuracy than the CFCC formula $\mathcal{I}^{[0,1]}_{\kappa,4,M,16}\left((f/g')\circ g^{(-1)}\right)$, which uses more number of functional evaluations. In fact, from Figure \ref{Sec6:Fig1EgE2} (B), we see that the number of functional evaluations used in formula $\mathcal{Q}_{\kappa,3}^{7,0.02}[f,g]$ is significantly less than $27\kappa^{1/3}$ for large $\kappa$.
\end{example}

\begin{example}\label{egE:3}\rm
The purpose of this example is to test the  estimates proved in Theorems \ref{thm:Sec5SE} and \ref{thm:Sec5E} for the CMFE formula $\mathcal{Q}_{\kappa,n}^{s,\gamma}[f,g]$, which approximates the integral $\mathcal{I}_\kappa[f,g]$. In this example, we consider the functions $f(x):=x^{-1/2}$ and $g(x):=x^2$ for $x\in I$, the same as those in Example \ref{egP:3}.

\begin{center}
\makeatletter
\def\@captype{figure}
\makeatother
\includegraphics[width=5.0cm]{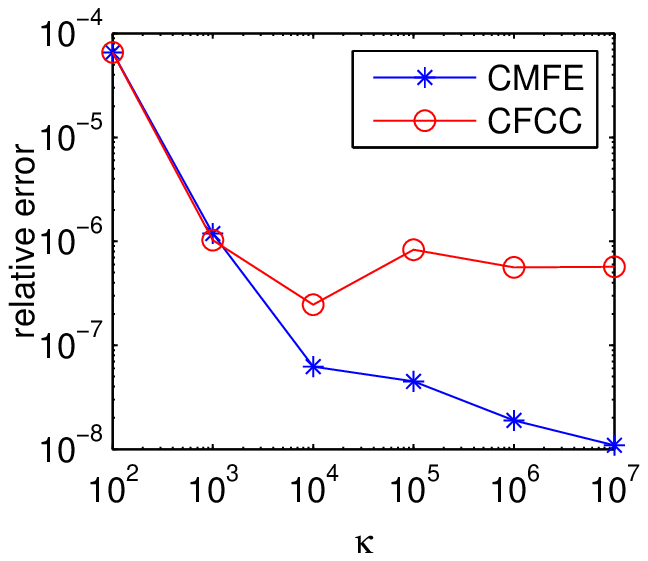}\ \hspace{2.5cm} \
\includegraphics[width=5.0cm]{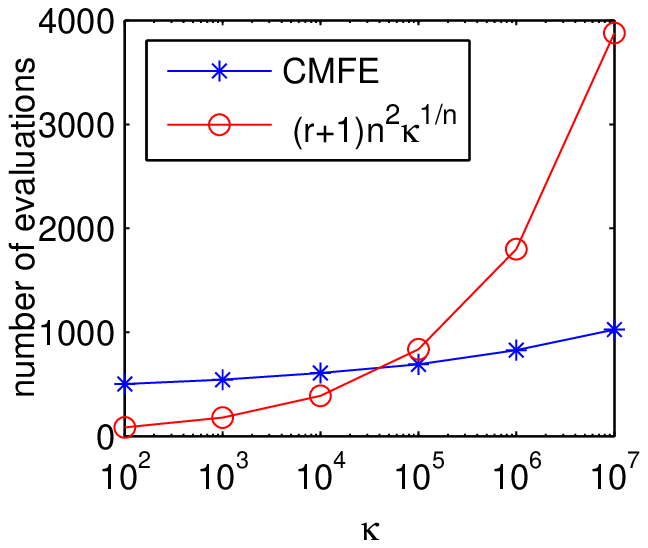}\\
\centerline{\small A:~RE of $\mathcal{Q}_{\kappa,3}^{12,0.02}[f,g]$ \hspace{4cm} B:~ $\mathcal{N}(\mathcal{Q}_{\kappa,3}^{12,0.02}[f,g])$}
\vspace{-1em}
{\small\caption{ RE and  $\mathcal{N}$ for $f(x):=x^{-1/2}$ and $g(x):=x^2$
\label{Sec6:Fig1EgE3}}}
\end{center}

We list in Table \ref{Sec6:Tab1EgE3} the RE values of the formula $\mathcal{Q}_{\kappa,n}^{12,0.02}[f,g]$ and those of the CFCC formula $\mathcal{I}^{[0,1]}_{\kappa,4,M,21}\left((f/g')\circ g^{(-1)}\right)$.
In (A) of Figure \ref{Sec6:Fig1EgE3}, we depict the RE values of the formula $\mathcal{Q}_{\kappa,3}^{12,0.02}[f,g]$, which are listed in Row 2 of Table \ref{Sec6:Tab1EgE3} and the RE values of the CFCC formula $\mathcal{I}^{[0,1]}_{\kappa,4,M,21}\left((f/g')\circ g^{(-1)}\right)$ in (A) of Figure \ref{Sec6:Fig1EgE3}, where the numbers $\mathcal{N}=505$, $545$, $609$, $693$, $829$, $1029$ for $\kappa=10^{j+1}$, $j\in\mathbb{Z}_6^+$ of functional evaluations used in the formula
are comparable to those used in the CMFE formula recorded in Row 3 of Table \ref{Sec6:Tab1EgE3}.
In Figure \ref{Sec6:Fig1EgE3} (B), we compare $\mathcal{N}(\mathcal{Q}_{\kappa,3}^{12,0.02}[f,g])$ with $(r+1)n^2\kappa^{1/n}$ with $r=1$ and $n=3$.

From Figure \ref{Sec6:Fig1EgE3} (A) and Table \ref{Sec6:Tab1EgE3}, we conclude that the accuracy of $\mathcal{Q}_{\kappa,n}^{12,0.02}[f,g]$ is higher than that of the CFCC formula.
\end{example}

From Examples \ref{egE:2} and \ref{egE:3}, we observe that the CMFE formula proposed in Section 5 has higher order of approximation accuracy than the CFCC formula when the integrand has a singularity with the index $\mu<0$ and a stationary point of the order $r>0$. Moreover, the CMFE formulas do not have to compute $g^{-1}$ for the nonlinear oscillator $g$ (which is required by the FCC and CFCC formulas) and as a result, they can save an enormous amount of computing time in comparing with the FCC and CFCC formulas.

In the next example, we shall compare the CPU time spent using the CMFE formulas, the FCC and CFCC formulas when computing oscillator integrals with a nonlinear oscillator. The CPU time is monitored by using {\it tic} and {\it toc} of Matlab. The computation of the change of variables $g(x)\to x$ is carried out using {\it fzero} of Matlab.

\begin{example}\label{egE:4}\rm
This example compare the CPU time spent when computing the integral $\mathcal{I}_\kappa[f,g]$ by using the CMFE formulas, the FCC  and CFCC formulas. We consider the functions $f_1(x):=1$, $f_2(x):=\ln{x}$ and
$g(x):=\left(\sin{(\pi x/2)}+2x\right)/3$ for $x\in I$.

\begin{center}
\footnotesize
\makeatletter
\def\@captype{table}
\makeatother
{\caption{ Comparison of CPU time  for  $f_1(x):=1$ and $g(x):=\left(\sin{(\pi x/2)}+2x\right)/3$  }
\vspace{0.2cm}
\label{Sec6:Tab1EgE4}}{
\begin{tabular}{c|c|c|c|c|c|c}
\hline
\multirow{2}{*}{$\kappa$}
&\multicolumn{3}{c|}{CMFE}&\multicolumn{3}{c}{FCC}\\
\cline{2-7}
&Time (sec.)&Approximation&$\mathcal{N}$&Time (sec.)&Approximation&$\mathcal{N}$\\
\hline
$10^2$&3.34e-2&-7.35e-3-(4.66e-3)i&12&2.84e-1&-7.35e-3-(4.66e-3)i&9\\
$10^3$&3.35e-2&1.24e-3-(1.12e-6)i&12&2.97e-1& 1.24e-3-(1.12e-6)i&9\\
$10^4$&3.31e-2&-4.59e-5+(2.27e-4)i&12&2.85e-1&-4.59e-5+(2.27e-4)i&9\\
$10^5$&3.33e-2&5.36e-7+(2.34e-5)i&12&2.83e-1& 5.36e-7+(2.34e-5)i&9\\
$10^6$&3.31e-2&-5.25e-7-(5.65e-7)i&12&2.84e-1&-5.25e-7-(5.65e-7)i&9\\
$10^7$&3.35e-2&6.31e-8+(2.20e-7)i&12&2.85e-1& 6.31e-8+(2.20e-7)i&9\\
\hline
\end{tabular}}
\end{center}

\begin{center}
\footnotesize
\makeatletter
\def\@captype{table}
\makeatother
{\caption{ Comparison of CPU time for $f_2(x):=\ln{x}$ and $g(x):=\left(\sin{(\pi x/2)}+2x\right)/3$}
\vspace{0.2cm}
\label{Sec6:Tab2EgE4}}{
\begin{tabular}{c|c|c|c|c|c|c}
\hline
\multirow{2}{*}{$\kappa$}
&\multicolumn{3}{c|}{CMFE}&\multicolumn{3}{c}{CFCC}\\
\cline{2-7}
 &Time (sec.)&Approximation&$\mathcal{N}$&Time (sec.)&Approximation&$\mathcal{N}$\\
\hline
$10^2$&4.09e-2&-1.30e-2-(4.51e-2)i&88&3.90e-1&-1.30e-2-(4.51e-2)i&73\\
$10^3$&4.10e-2&-1.31e-3-(6.43e-3)i&88&3.95e-1&-1.32e-3-(6.43e-3)i&73\\
$10^4$&4.10e-2&-1.32e-4-(8.37e-4)i&88&3.96e-1&-1.31e-4-(8.37e-4)i&73\\
$10^5$&4.10e-2&-1.32e-5-(1.03e-4)i&88&3.63e-1&-1.33e-5-(1.03e-4)i&73\\
$10^6$&4.15e-2&-1.32e-6-(1.22e-5)i&88&3.64e-1&-1.27e-6-(1.22e-5)i&73\\
$10^7$&4.16e-2&-1.32e-7-(1.42e-6)i&88&3.94e-1&-1.34e-7-(1.42e-6)i&73\\
\hline
\end{tabular}}
\end{center}

Tables \ref{Sec6:Tab1EgE4} lists
approximation values produced by the CMFE formula $\mathcal{Q}_{\kappa,3}[f_1,g]$  and the FCC formula $\mathcal{I}_{\kappa,8}^{[0,1]}\left((f_1/g')\circ g^{(-1)}\right)$ and computing time they consume.
While Table \ref{Sec6:Tab2EgE4} lists approximation values produced by the CMFE formula $\mathcal{Q}_{\kappa,6}^{6,0.02}[f_2,g]$ and the CFCC formula $\mathcal{I}^{[0,1]}_{\kappa,8,10,12}\left((f_2/g')\circ g^{(-1)}\right)$ and computing time they consume.
Both tables show that the CMFE formula consumes significantly less CPU time than the FCC, CFCC formulas when they produce comparable approximation results even when the CMFE formula uses more functional evaluations.

\end{example}

\section{Concluding remarks}

We develop in this paper composite quadrature formulas for computing highly oscillatory integrals defined on a finite interval with both singularities and stationary points. The partitions of the integration interval used in the composite quadrature formulas are designed according to the degree of oscillation and the singularity. In each of the subintervals, we use piecewise polynomial interpolants to approximate the integrand to form two classes of formulas having polynomial (resp. exponential) order of convergence by using fixed (resp. variable) number of interpolation nodes. Numerical experiments are carried out to confirm the theoretical results on the accuracy of the proposed formulas and to compare them with existing methods. Numerical results show that the proposed formulas outperform the existing methods in both approximation accuracy and computational efficiency.



\end{document}